\newcolumntype{P}[1]{>{\centering\arraybackslash}p{#1}}
\theoremstyle{plain}
\newtheorem{thm}{Theorem}[section]
\newtheorem{cor}[thm]{Corollary}
\newtheorem{lem}[thm]{Lemma}
\newtheorem{prop}[thm]{Proposition}
\newtheorem{conj}[thm]{Conjecture}
\def\@rst #1 #2other{#1}
\newcommand\MR[1]{\relax\ifhmode\unskip\spacefactor3000 \space\fi
  \MRhref{\expandafter\@rst #1 other}{#1}}
\newcommand{\MRhref}[2]{\href{http://www.ams.org/mathscinet-getitem?mr=#1}{MR#2}}
\theoremstyle{definition}
\newtheorem{remark}[thm]{Remark}
\numberwithin{equation}{section}
\newcommand{\dsb}{\begin{adjustwidth}{2.5em}{0pt}
\begin{footnotesize}}
\newcommand{\dse}{\end{footnotesize}
\end{adjustwidth}}
\newcommand{\ssb}{\begin{adjustwidth}{2.5em}{0pt}}
\newcommand{\sse}{\end{adjustwidth}}
\newcommand{\aryb}{\begin{eqnarray*}}
\newcommand{\arye}{\end{eqnarray*}}
\def\alb#1\ale{\begin{align*}#1\end{align*}}
\def\allb#1\alle{\begin{align}#1\end{align}}
\newcommand{\eqb}{\begin{equation}}
\newcommand{\eqe}{\end{equation}}
\newcommand{\eqbn}{\begin{equation*}}
\newcommand{\eqen}{\end{equation*}}
\newcommand{\BB}{\mathbbm}
\newcommand{\op}{\operatorname}
\newcommand{\wt}{\widetilde}
\newcommand{\mcl}{\mathcal}
\newcommand{\eps}{\varepsilon}
\DeclareMathOperator{\Perm}{Perm}
\DeclareMathOperator{\Graph}{Graph}
\newcommand{\bbP}{\mathbb{P}}
\newcommand{\bbE}{\mathbb{E}}
\newcommand{\bbN}{\mathbb{N}}
\newcommand{\efrak}{\mathfrak{e}}
\newcommand{\sfrak}{\mathfrak{s}}
\newcommand{\calS}{\mathcal{S}}
\newcommand{\sfS}{\mathsf{S}}
\newcommand{\Ffrak}{\mathfrak{F}}
\newcommand{\lstar}{\lambda_{\ast}}
\newcommand{\calE}{\mathcal{E}}
\newcommand{\calG}{\mathcal{G}}
\let\originalleft\left
\let\originalright\right
\renewcommand{\left}{\mathopen{}\mathclose\bgroup\originalleft}
\renewcommand{\right}{\aftergroup\egroup\originalright}
\DeclareMathOperator{\LIS}{LIS}
\DeclareMathOperator{\LHS}{LHS}
\DeclareMathOperator{\LIN}{LIN}
\DeclareMathOperator{\LCL}{LCL}
\DeclareMathOperator{\Leb}{Leb}
\title{Power-law bounds for increasing subsequences in Brownian separable permutons and homogeneous sets in Brownian cographons}
 \date{ }
 \author{
\begin{tabular}{c} Jacopo Borga\\[-5pt]\small Stanford University \end{tabular}
\begin{tabular}{c} William Da Silva\\[-5pt]\small University of Vienna \end{tabular} 
\begin{tabular}{c} Ewain Gwynne\\[-5pt]\small University of Chicago \end{tabular} 
}
\begin{document}

\maketitle
\thispagestyle{empty}
\vspace{-0.5cm}

\begin{abstract}
The Brownian separable permutons are a one-parameter family -- indexed by $p\in(0,1)$ -- of universal limits of random constrained permutations. We show that for each $p\in (0,1)$, there are explicit constants $1/2 < \alpha_*(p) \leq \beta^*(p) < 1$ such that the length of the longest increasing subsequence in a random permutation of size $n$ sampled from the Brownian separable permuton is between $n^{\alpha_*(p) - o(1)}$ and $n^{\beta^*(p) + o(1)}$ with probability tending to 1 as $n\to\infty$. 
In the symmetric case $p=1/2$, we have $\alpha_*(p) \approx 0.812$ and $\beta^*(p)\approx 0.975$. We present numerical simulations which suggest that the lower bound $\alpha_*(p)$ is close to optimal in the whole range $p\in(0,1)$. 
	
Our results work equally well for the closely related Brownian cographons. In this setting, we show that for each $p\in (0,1)$, the size of the largest clique (resp.\ independent set) in a random graph on $n$ vertices sampled from the Brownian cographon is between $n^{\alpha_*(p) - o(1)}$ and $n^{\beta^*(p) + o(1)}$ (resp.\ $n^{\alpha_*(1-p) - o(1)}$ and $n^{\beta^*(1-p) + o(1)}$) with probability tending to 1 as $n\to\infty$.

Our proofs are based on the analysis of a fragmentation process embedded in a Brownian excursion introduced by Bertoin (2002). We expect that our techniques can be extended to prove similar bounds for uniform separable permutations and uniform cographs.
\end{abstract}

\vspace{-0.5cm}

\begin{figure}[h!]
	\centering
	\includegraphics[width=.26\textwidth]{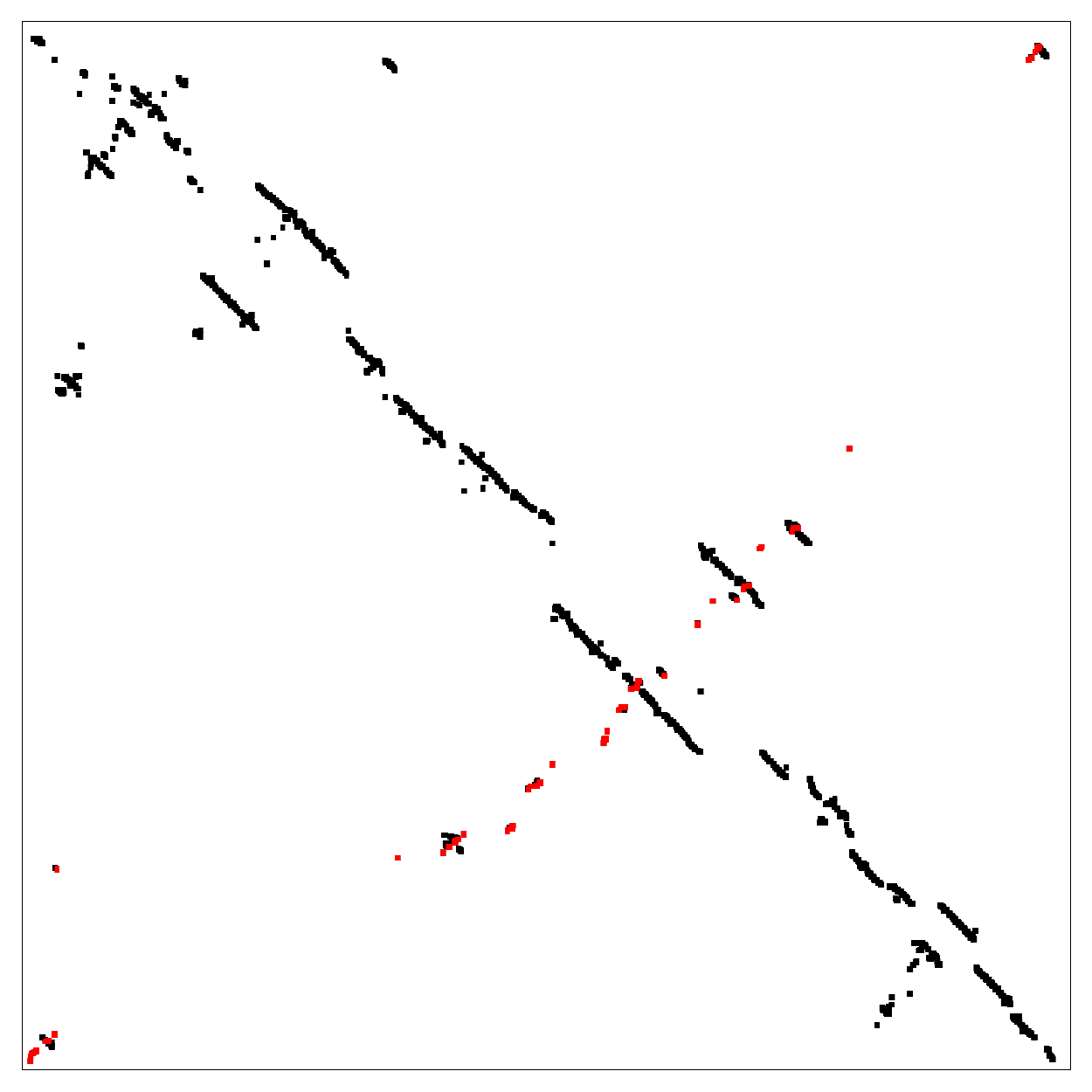}
	\includegraphics[width=.26\textwidth]{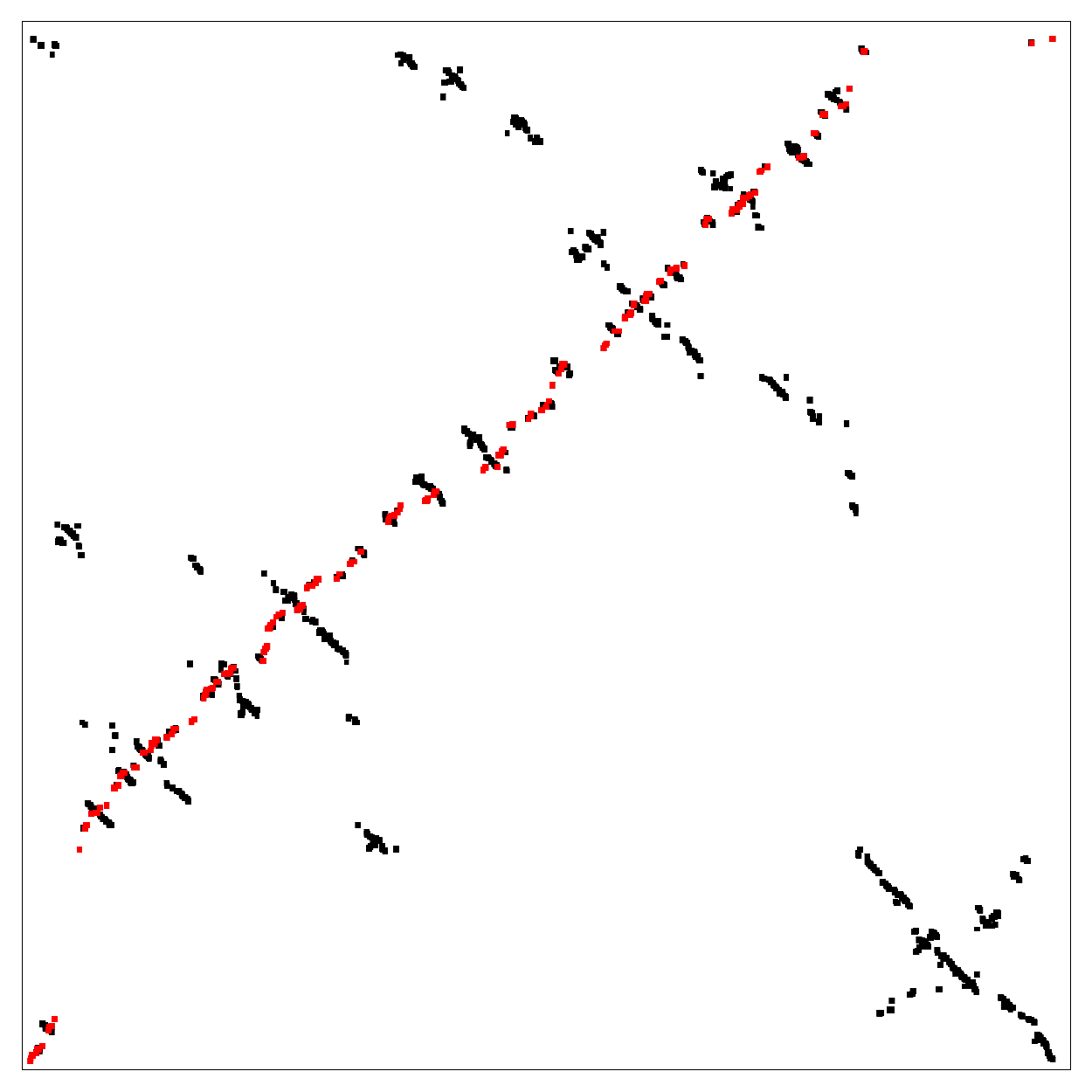} 
	\includegraphics[width=.26\textwidth]{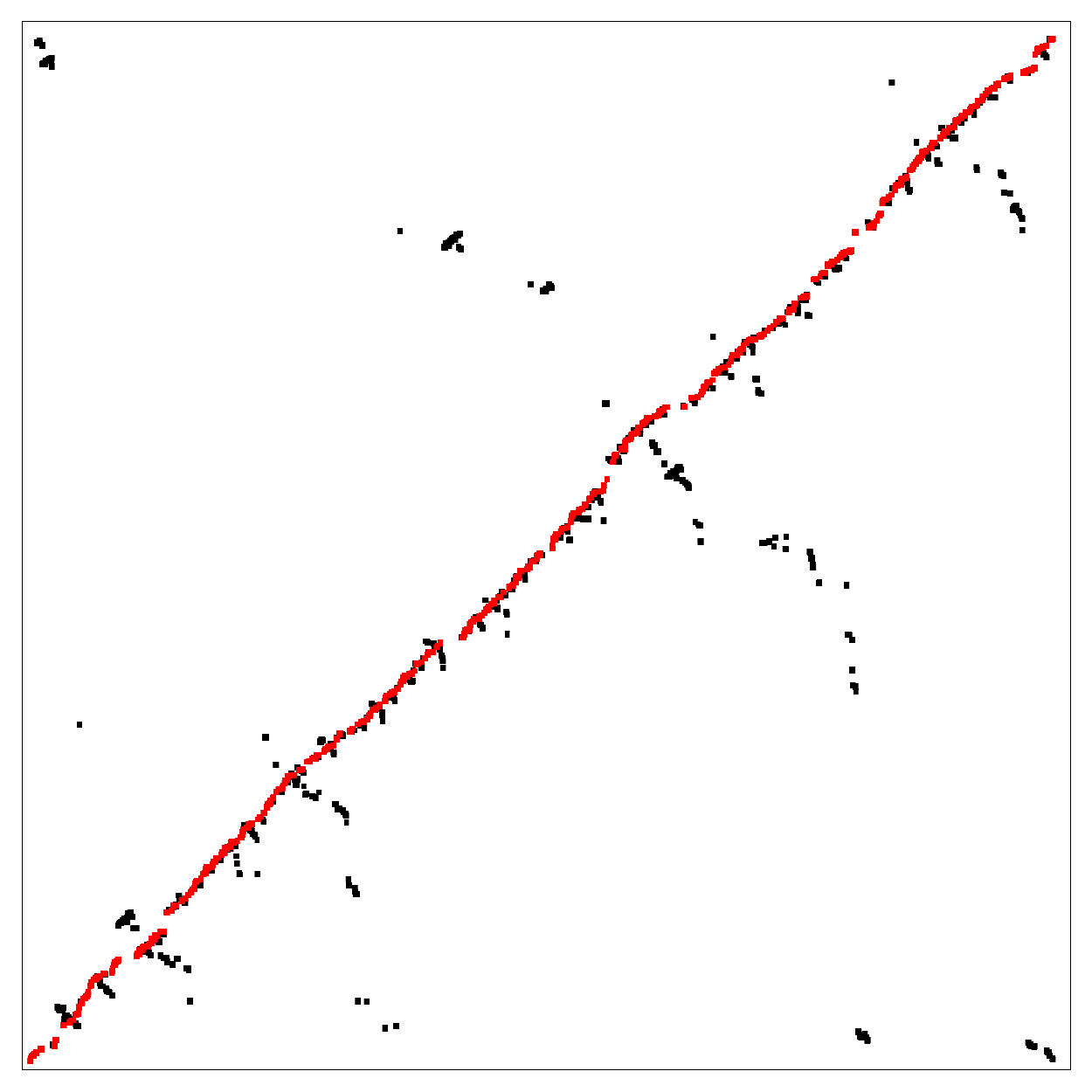}
	\includegraphics[width=.26\textwidth]{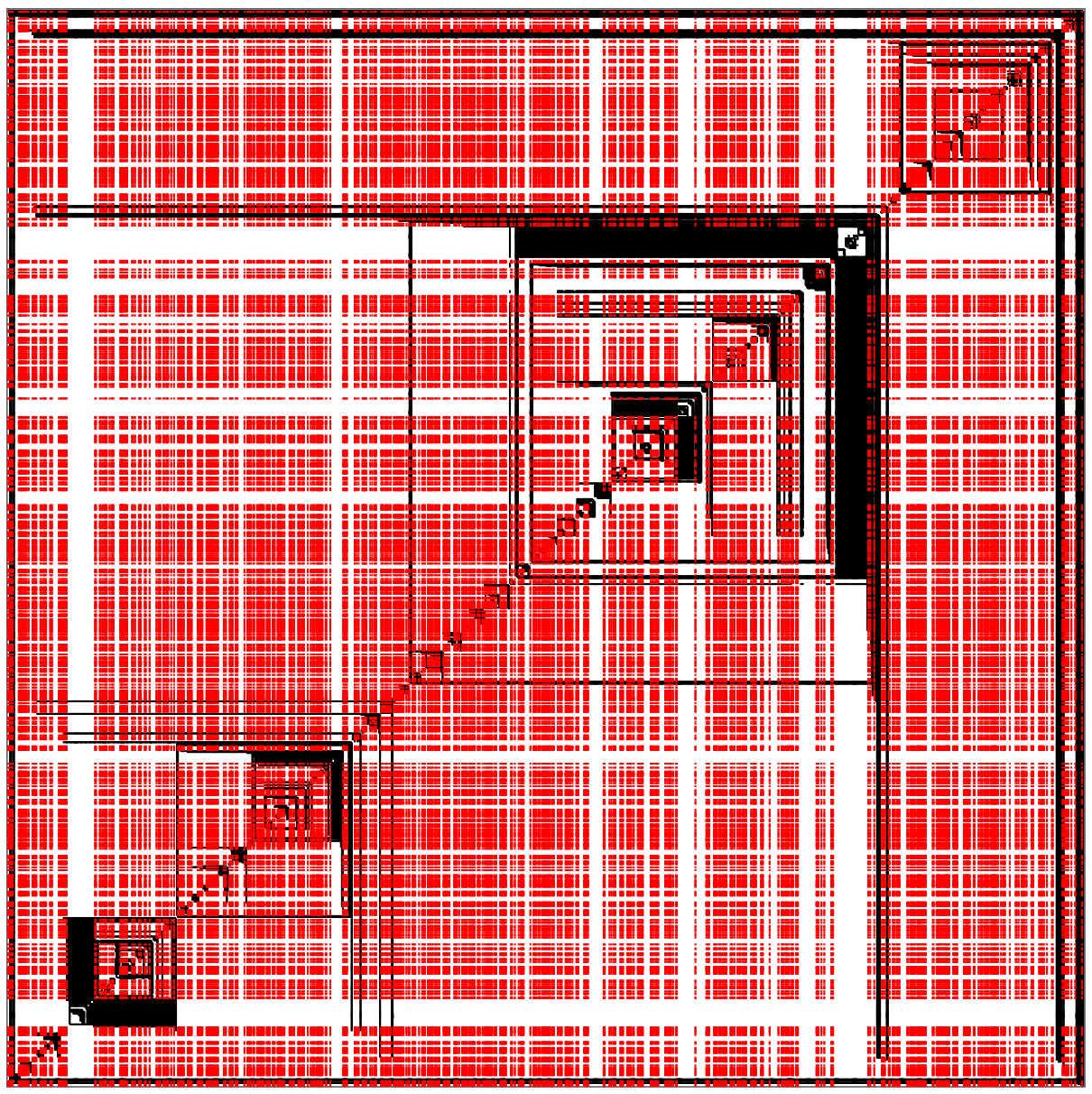}  
	\includegraphics[width=.26\textwidth]{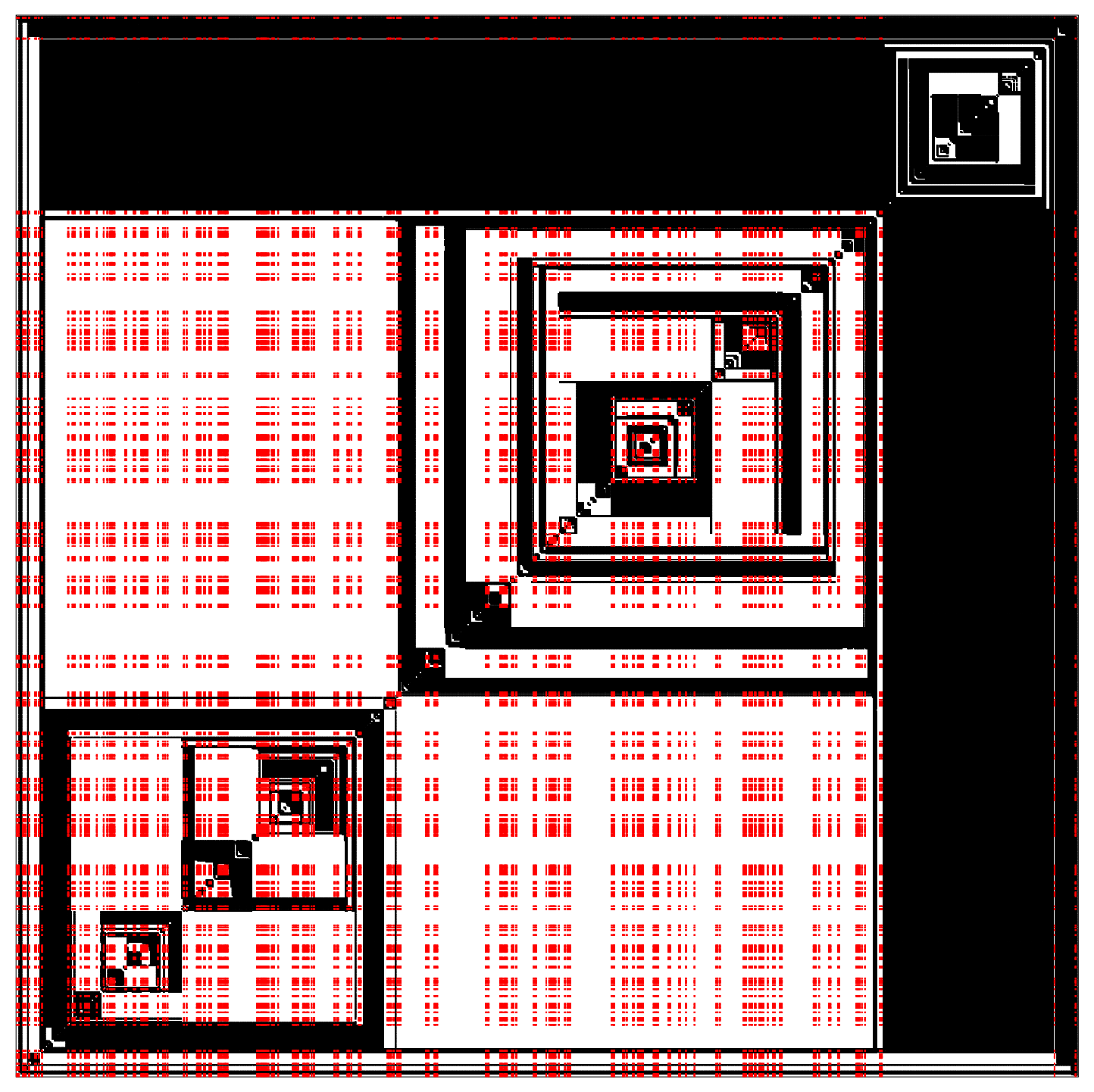}
	\includegraphics[width=.26\textwidth]{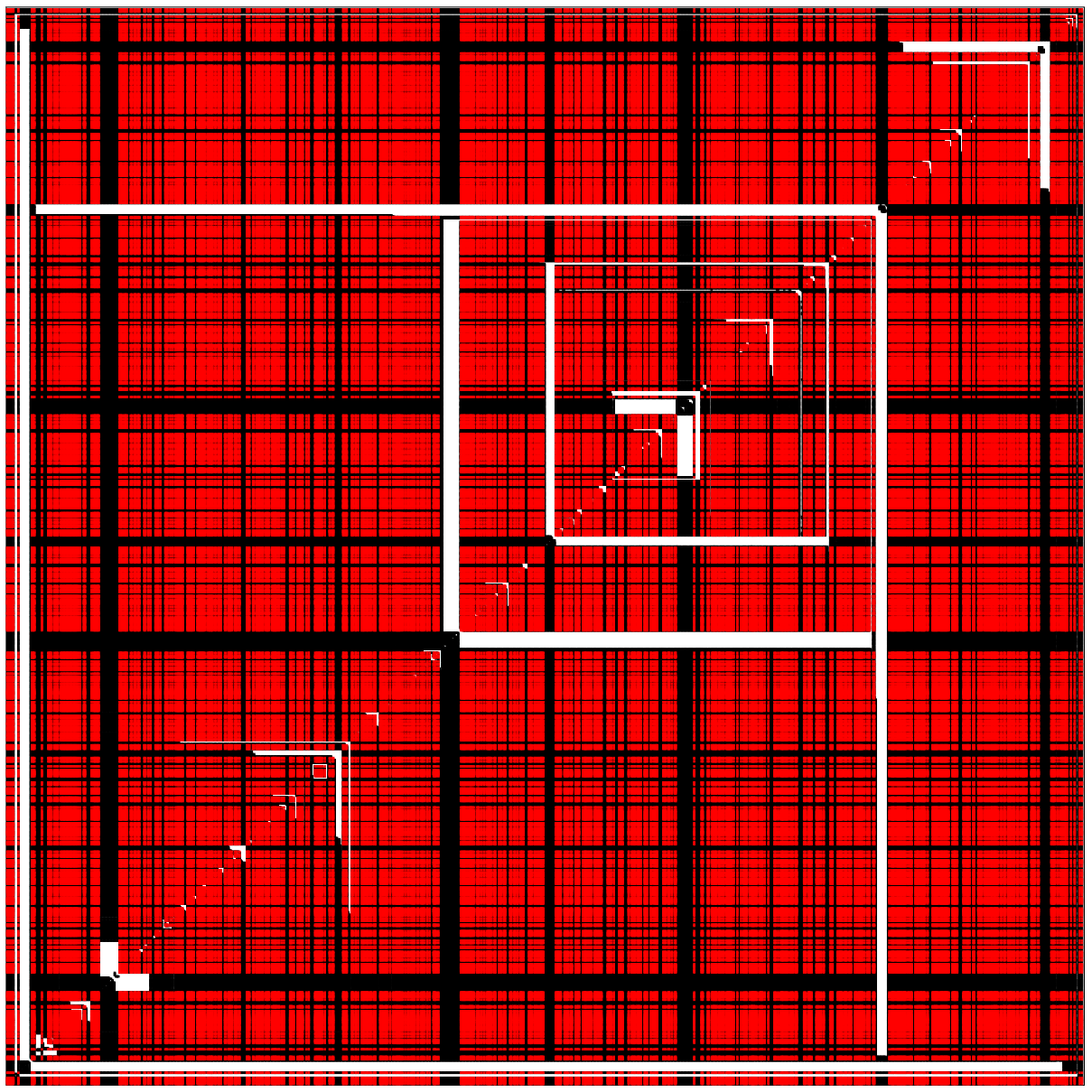} 
	\vspace{-0.25cm}
	\caption{\label{fig-simulations} \footnotesize{\textbf{Top:} The diagram of three large permutations (in black) sampled from the Brownian separable permuton with parameter $p=0.2,0.5,0.9$ (from left to right). In red we highlighted one longest increasing subsequence. \textbf{Bottom:} The adjacency matrix of three large graphs (ones are plotted in black) sampled from the Brownian cographon with parameter $p=0.2,0.5,0.9$ (from left to right). In red we highlighted one largest homogeneous set. In the first two samples it is an independent set, while in the third case it is a clique.}}
\end{figure}

 

\tableofcontents

\bigskip

\noindent\textbf{Acknowledgments.} We thank four anonymous referees for helpful comments on an earlier version of this article. We thank Jean Bertoin for some helpful discussion on Lévy and fragmentation processes and Valentin Féray for some helpful discussion on the problems investigated in this paper. We are also grateful to Arka Adhikari, Élie Aïdékon, Omer Angel, Matija Bucic, Amir Dembo and Lucas Teyssier for interesting discussions.
E.G.\ was partially supported by a Clay research fellowship. 
W.D.S. acknowledges the support of the two Austrian Science Fund (FWF) grants on “Scaling limits in random conformal geometry” (DOI: 10.55776/P33083) and “Emergent branching structures in random geometry” (DOI: 10.55776/ESP534).

\section{Introduction}
\label{sec-intro}

The length of the longest increasing subsequence in a random permutation and the size of the largest homogeneous set (\textit{i.e.}\ a clique or an independent set) in a random graph are two of the classical problems at the interface of combinatorics and probability theory, with connections to several other areas of mathematics. 

In this paper, we investigate these classical problems in the setting of universal Brownian-type permutations and graphs, \textit{i.e.}\ for the \emph{Brownian separable permutons} \cite{bassino-separable-permuton,bbfgp-universal} and the \emph{Brownian cographons} \cite{bassino2022random,stufler2021graphon}. These objects are the universal limits of various random permutations and graph families.

In the following sections, we first discuss our results on permutations (\cref{sect:perm}) and then on graphs (\cref{sect:graphs}). In both sections, we briefly review the literature around the questions addressed in this paper. We then present two conjectures and some potential extensions of our work (\cref{sext:conj}). Here, we also discuss a few open problems and additional motivation for our work coming from random geometry and the study of planar maps.
Finally, we give an overview of the techniques used to establish our main results, explaining how they can potentially be used to answer similar questions in the continuum setting (\cref{sect:proof-tech}).

\subsection{Brownian separable permuton results}\label{sect:perm}

\subsubsection{Permutons and the Brownian separable permutons}\label{sect:broe_sep}

A Borel probability measure $\mu$ on the unit square $ [ 0,1 ] ^ 2$ is a \textbf{permuton} if both of its marginals are uniform, that is, $\mu([a,b]\times[0,1]) = \mu( [0,1] \times [a,b] )=b-a$ for all $0\le a < b\le 1$.
To a permutation $ \sigma $, we can associate a permuton $\mu_{\sigma}$ which is equal to $n$ times the Lebesgue measure on the union of the squares $\{[\frac{i-1}{n}, \frac{i}{n}]\times  [\frac{\sigma(i)-1}{n}, \frac{\sigma(i)}{n}]: i \in [n]\}.$ 
For a sequence of permutations $\{\sigma_n\}_{n\in \BB N}$, we say that $\sigma_n$ \textbf{converges in the permuton sense} to a limiting permuton $\mu$ if the permutons $\mu_{\sigma_n}$ converge weakly to $\mu$. The set of permutons equipped with the topology of weak convergence of measures is a compact metric space. The theory of permutons has seen many recent developments at the interface between discrete mathematics, probability theory, and statistics, see for instance, \cite{grubel2022ranks} for a survey. 

\medskip

We now recall the construction, due to Maazoun \cite{maazoun-separable-permuton}, of the Brownian separable permuton with parameter $p\in[0,1]$ in terms of a Brownian excursion with i.i.d.\ coin flips. Under $\bbP$, we call \textbf{signed excursion} a pair $(\efrak, \sfrak, p)$ consisting of a Brownian (normalized) excursion $\efrak$, together with an independent sequence $\sfrak$ of i.i.d.\ $p$-coins $\{\oplus,\ominus\}$, \textit{i.e.}\ $\bbP(\oplus)=p=1-\bbP(\ominus)$. 
One should think of the sequence $\sfrak$ as being indexed by the local minima of $\efrak$.\footnote{For the technicalities involved in indexing an i.i.d.\ sequence by	this random countable set, see \cite[Section 2.2]{maazoun-separable-permuton}.}

We define the following random relation $\vartriangleleft_{\efrak, \sfrak, p}$ on $[0,1]$:
conditional on $\efrak$, if $x,y\in[0,1]$, with $x<y$, and $\min_{[x,y]}\efrak$ is reached at a unique point which is a strict local minimum $\ell_{x,y}\in(x,y)$ then

\begin{equation}\label{eq:exc_to_perm}
	\begin{cases}
		x\vartriangleleft_{\efrak, \sfrak, p} y, &\quad\text{if}\quad \sfrak(\ell_{x,y})=\oplus,\\
		y\vartriangleleft_{\efrak, \sfrak, p} x, &\quad\text{if}\quad \sfrak(\ell_{x,y})=\ominus.\\
	\end{cases}
\end{equation}
Standard properties of the Brownian excursion ensure the existence of a random subset $ \mcl R_\efrak \subset [0,1]$ such that the complement has a.s.\ Hausdorff dimension $1/2$ -- \textit{i.e.}\ $\bbP(\dim([0,1]\setminus\mcl R_\efrak)=1/2)=1$, where $\dim(\cdot)$ denotes the Hausdorff dimension of a set -- and for every $x,y\in  \mcl R_\efrak$ with $x<y$, $\min_{[x,y]}\efrak$ is reached at a unique point which is a strict local minimum. In particular, the restriction of $\vartriangleleft_{\efrak, \sfrak, p}$ to $\mcl R_\efrak$ is a total order (see \cref{lem:prop_support_perm} below).
Setting
\begin{equation}\label{eq:level_function_sep}
	\psi_{\efrak, \sfrak, p}(t)\coloneqq\Leb\left( \big\{x\in[0,1]|x \vartriangleleft_{\efrak, \sfrak, p} t\big\}\right),\quad \forall t\in[0,1],
\end{equation}
then the (biased) \textbf{Brownian separable permuton} is the push-forward of the Lebesgue measure on $[0,1]$ via the mapping $(\mathbb{I},\psi_{\efrak, \sfrak, p})$, where $\mathbb{I}$ denotes the identity. That is,
\begin{equation}\label{eq:sep_perm}
	\bm{\mu}_p(\cdot)\coloneqq (\mathbb{I},\psi_{\efrak, \sfrak, p})_{*}\Leb(\cdot)=\Leb\left(\{t\in[0,1]|(t,\psi_{\efrak, \sfrak, p}(t))\in \cdot \,\}\right).
\end{equation}
Heuristically, $\psi_{\efrak, \sfrak, p}$ is the \emph{continuum permutation} of the elements in the interval $[0,1]$ induced by the order $\vartriangleleft_{\efrak, \sfrak, p}$ and $ \bm{\mu}_p$ is the diagram of $\psi_{\efrak, \sfrak, p}$. We stress that $\bm{\mu}_p$ is a \emph{random} permuton.

\medskip

The Brownian separable permutons were first introduced while studying random separable permutations \cite{bassino-separable-permuton}, \textit{i.e.}\ permutations avoiding the patterns $2413$ and $3142$ (see the Wikipedia page on separable permutations for further details and many properties of these permutations). The authors of the latter paper showed that uniform random separable permutations converge in distribution to $ \bm\mu_{1/2}$ when the size of the permutations tends to infinity. Later, it has been proved that this convergence result is in some sense universal: uniform permutations in proper substitution-closed classes \cite{bbfgp-universal, bbfs-tree-sep} or classes having a finite combinatorial specification for the substitution decomposition \cite{bassino2022scaling} converge in distribution (under some technical assumptions) to $ \bm\mu_{p}$, where the parameter $p$ depends on the chosen class.
These papers initiated a line of research around random Brownian fractal-type permutons, see for instance \cite{borga-skew-permuton,bgs-meander}.

\subsubsection{The length of the longest increasing subsequence}\label{sect:lis}

There is a vast literature devoted to the asymptotic behavior of the length of the longest increasing subsequence $\LIS(\sigma_n)$ for various types of large random permutations $\sigma_n$. 
For uniform random permutations $\sigma_n$, the study of $\LIS (\sigma_n)$ was initiated in the 1960s by Ulam~\cite{Ulam-lis}. In this case, one has $\LIS (\sigma_n) \sim 2\sqrt n$~\cite{Hammersley-lis,Vervsik-lis,Logan-lis,Aldous-lis}. 
The strongest known result is due to Dauvergne and Vir\'{a}g~\cite{Dauvergne-lis}, who showed that the scaling limit of the longest increasing subsequence in a uniform permutation is the \emph{directed geodesic} of the \emph{directed landscape}. 
The study of $\op{LIS}(\sigma_n)$ is connected with many other problems in combinatorics and probability theory, such as last passage percolation and random matrix theory; see the book of  Romik~\cite{Romik-lis} for an overview. 
In recent years, many extensions beyond uniform permutations have been considered, for instance:
\begin{itemize}
	\item when $\sigma_n$ is a uniform pattern-avoiding permutation \cite{Deutsch-lis,Madras-lis,Mansour-lis,bassino-lis};
	\item when $\sigma_n$ follows the \emph{Mallows distribution}, or is a product of such random permutations \cite{Starr-lis-mallows,Bhatnagar-lis-mallows,Basu-lis-mallows,zhong2023length};
	\item when $\sigma_n$ is conjugacy-invariant with few cycles, in particular \emph{Ewens-distributed}~\cite{Kammoun-lis};
	\item when $\sigma_n$ is sampled\footnote{A precise definition of what it means to sample a permutation from a measure is given in \cref{sect:main_res_perm}.} from a probability measure of the unit square $[0,1]^2$ having a density which satisfies certain regularity/divergence conditions \cite{deu-lim,deuschel1999increasing,dubach2023locally}. 
	\item when $\sigma_n$ is sampled from a random Brownian-type permuton, such as the Brownian separable permutons \cite{bassino-lis} or the \emph{skew Brownian permutons} \cite{bgs-meander}.
\end{itemize}

To the best of our knowledge, the only papers proving non-trivial power-law bounds for the length of the longest increasing subsequence are the recent work of Dubach \cite{dubach2023locally} and the works on the Mallows model of Bhatnagar and Peled, and Zhong \cite{Bhatnagar-lis-mallows,zhong2023length}.
Dubach \cite{dubach2023locally} built a family of permutons $\mu_\alpha$ for $1/2 < \alpha < 1$, with a density satisfying certain types of divergence, and which have the interesting property that a sequence of random permutations sampled from $\mu_\alpha$ has a longest increasing subsequence with growth rate equivalent to $n^\alpha$.  On the other hand, the authors of \cite{Bhatnagar-lis-mallows,zhong2023length} looked at random permutations $\sigma_n$ distributed according to the Mallows distribution with respect to various distances. They showed that rescaling the so-called \emph{scale parameter} $\beta$ with $n$ in a specific way (made explicit in the papers), one obtains that $\LIS(\sigma_n)$ is of order $n^\alpha$ for some $1/2 < \alpha < 1$ (and in some cases they prove exact limit theorems).

\subsubsection{Main results}\label{sect:main_res_perm}

Given a permuton $\mu$, sample $n$ independent points $Z_1, \dots , Z_n$ in the unit
square $[0, 1]^2$ according to $\mu$. These $n$ points induce a random permutation $\sigma$: for any $i,j\in[n]:=\{1,\dots,n\}$, let $\sigma(i) = j$ if the point with $i$-th lowest $x$-coordinate has $j$-th lowest $y$-coordinate (this is well-defined since the marginals of a permutons are uniform and so almost surely there are no points with the same $x$- or $y$-coordinates). We denote this permutation by $\Perm(\mu,n)$ and call it the \textbf{random permutation induced by the permuton $\mu$} of size $n$.  

This definition can be naturally extended to the case of random permutons. For more details, see \textit{e.g.}\ \cite[Section 2.1]{borga-thesis}. It is important to note that $\Perm(\mu,n)$ converges in distribution in the permuton sense to $\mu$ when $n$ tends to infinity, as shown in \cite[Lemma 2.3]{bbfgp-universal}.

\medskip

Recall that $\LIS(\cdot)$ denotes the length of the longest increasing subsequence in a permutation. Let $p\in(0,1)$ and recall that $\bm{\mu}_p$ is the Brownian separable permuton of parameter $p$. We restrict our analysis to the case $p\in(0,1)$, since the cases when $p=0$ and $p=1$ are degenerate: the Brownian separable permutons $\bm\mu_0$ (resp.\ $\bm\mu_1$) is the Lebesgue measure on the decreasing (resp.\ increasing) diagonal of the unit square. 

We are interested in studying $\LIS(\Perm(\bm{\mu}_p,n))$. See \cref{fig-simulations} for some simulations.
The results of \cite{bassino-lis} show that the following convergence holds in probability for all $p\in(0,1)$,
\begin{equation*}
	\frac{\LIS(\Perm(\bm{\mu}_p,n))}{n}\to 0.
\end{equation*}
It is simple to show that $\LIS(\Perm(\bm{\mu}_p,n))$ is bounded below by $\sqrt n$ with high probability (see the discussion below Theorem 1.7 in \cite{bassino-lis}). Our first main result shows that $\LIS(\Perm(\bm{\mu}_p,n))$ has an asymptotic behavior that is strictly different from the two bounds above. 

\begin{thm}\label{thm:upper_lower_perm}
	There exist two explicit functions $\alpha_* :(0,1)\to(1/2,1)$ and $\beta^* :(0,1)\to(1/2,1)$ such that for all $p\in(0,1)$,
	\begin{itemize}
		\item $1/2<\alpha_*(p)\leq\beta^*(p)<1$;
		\item for each $\alpha < \alpha_*(p)$ and each $\beta > \beta^*(p)$, it holds with probability tending to one as $n\to\infty$ that 
		\begin{equation*}
			n^\alpha \leq \LIS(\Perm(\bm{\mu}_p,n)) \leq n^\beta.
		\end{equation*}
	\end{itemize}	
\end{thm}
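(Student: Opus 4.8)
The plan is to rephrase $\LIS$ as a functional of Bertoin's self-similar fragmentation of the tree coded by $\efrak$, decorated by the i.i.d.\ signs $\sfrak$, and then to analyse a recursive distributional equation for that functional. The first step is the translation. Sampling $n$ points from $\bm{\mu}_p$ amounts to choosing i.i.d.\ uniform times $t_1<\dots<t_n$ in $[0,1]$; the point with $i$-th smallest $x$-coordinate has $y$-coordinate $\psi_{\efrak,\sfrak,p}(t_i)$, so $\{t_{i_1}<\dots<t_{i_k}\}$ is an increasing subsequence exactly when $t_{i_a}\vartriangleleft_{\efrak,\sfrak,p} t_{i_{a+1}}$ for each $a$, i.e.\ when the branch point (the argmin of $\efrak$ over the corresponding sub-interval) of each consecutive pair carries a $\oplus$ sign. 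Since for finitely many points on a line the branch point of any pair is one of the consecutive branch points, an increasing subsequence is precisely a set of leaves of the reduced tree all of whose pairwise branch points are $\oplus$; equivalently it is the set of sampled leaves of a subtree $S$ containing the root in which, at every $\ominus$ branch point, only one of the two descending subtrees is retained. Hence $\LIS(\Perm(\bm{\mu}_p,n))$ is the maximum, over such ``$\oplus$-pruned'' subtrees $S$, of the number of sampled points lying in $S$. Combining Maazoun's recursive description of $\bm{\mu}_p$ with the independence of $\sfrak$ and $\efrak$, this yields, with $L_n:=\LIS(\Perm(\bm{\mu}_p,n))$, the distributional recursion $L_n\eqD L^{(1)}_I+L^{(2)}_{n-I}$ with probability $p$ and $L_n\eqD\max\bigl(L^{(1)}_I,L^{(2)}_{n-I}\bigr)$ with probability $1-p$, where $L^{(1)},L^{(2)}$ are independent copies and the split size $I$ has the Catalan law $\bbP[I=i]=C_{i-1}C_{n-i-1}/C_{n-1}$ — the discrete skeleton of the mass-fragmentation of $\efrak$, whose heavy-tailed, stable-$1/2$ behaviour forces the index $-1/2$ of the fragmentation to govern all corrections. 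The remainder of the argument lives inside this recursion.

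For the lower bound I would fix an explicit pruning rule and count survivors. The simplest workable choice is: at each $\ominus$ branch point keep the descending subtree of larger mass. This yields a random increasing subset $S$, so $\LIS\ge \#\{i:t_i\in S\}$. A tagged-fragment first-moment computation gives $\bbE\,\#\{i:t_i\in S\}=n\,\bbP[\text{a uniform leaf is never sent into the smaller part at a }\ominus\text{ split}]$, and by self-similarity this probability decays polynomially, like $n^{\alpha_*(p)-1}$, where $\alpha_*(p)$ is the unique root in $(1/2,1)$ of an explicit equation built from the dislocation measure $\propto (x(1-x))^{-3/2}\mathbbm 1_{\{x\in(0,1/2)\}}\,dx$ (equivalently the Laplace exponent of the tagged-fragment subordinator) together with the rate at which $\ominus$ splits are ``balanced''; concretely the equation says that the leading, order-$n^{\alpha-1/2}$ correction in the recursion above cancels. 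To upgrade the first moment to a high-probability statement I would run a second-moment / concentration argument adapted to the branching structure: condition on the first few macroscopic fragments, note that $\#\{i:t_i\in S\}$ is, up to sampling noise, a sum of independent rescaled copies of the same quantity, and apply a Paley--Zygmund bound together with a truncation taming the heavy tail of the split sizes. This gives $n^{\alpha}\le\LIS(\Perm(\bm{\mu}_p,n))$ for every $\alpha<\alpha_*(p)$ with probability tending to $1$.

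For the upper bound the naive union bound is hopeless: a direct computation gives $\bbE\,\#\{\text{increasing subsequences of length }k\}=\binom{n}{k}p^{\,k-1}$, which is exponentially large for $k$ linear in $n$, so first moments do not even recover $\LIS=o(n)$. Instead I would bound a well-tuned functional of $L_n$ — a truncated exponential moment, or $\bbE[L_n^{\,s}]$ for a suitable $s$ — by induction down the fragmentation tree, using the recursion and the fact that a typical split is wildly unbalanced (one part of size $O(1)$), so that $L_n$ changes by only a small relative amount per generation and the rare balanced splits act only at order $n^{-1/2}$; the exponent $\beta^*(p)\in(1/2,1)$ is the critical value at which the induction closes. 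Because a $\ominus$ branch point contributes $\max(L_I,L_{n-I})$ rather than $L$ of the larger part, this estimate has to be carried out at the level of the whole law of $L_n$ — a distributional fixed point — which is why one only gets $\alpha_*(p)\le\beta^*(p)$ in general. The structural bounds $1/2<\alpha_*(p)$ and $\beta^*(p)<1$ then follow by inspecting the defining equations at the endpoints: $\alpha=1/2$ is the trivial $\sqrt n$ baseline of \cite{bassino-lis}, strictly improved as soon as $p>0$ since $\oplus$ splits genuinely accumulate length, while $\beta=1$ is excluded for $p<1$ because each $\ominus$ split destroys a positive fraction of the surviving structure, so the largest increasing subset has Lebesgue measure zero a.s.; and both exponents tend to $1/2$ as $p\to 0$ and to $1$ as $p\to 1$.

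I expect the upper bound to be the main obstacle. The difficulty is that in the recursion for $L_n$ the leading order is always critical — the additive $\oplus$ contributions and the rare balanced $\ominus$ splits both act at the scale $n^{-1/2}$ dictated by the fragmentation index — so $\beta^*(p)$ is buried in the second-order term, and one must moreover keep track of the full distribution of $L_n$ (not merely its mean) in order to control the $\max$ coming from the $\ominus$ branch points; making the resulting distributional recursion precise enough to read off a clean power law is the technical core of the proof. A secondary difficulty is the heavy tail of the split sizes $I$, which makes the concentration step in the lower bound delicate and requires a careful truncation.
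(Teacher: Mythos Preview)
Your lower-bound strategy is essentially the paper's. The rule ``keep the larger part at each $\ominus$ split'' is exactly the selection rule $\sfS$ of~\eqref{eq:selection}; the tagged-fragment first moment is \cref{prop: estimate killing}; the second moment plus Paley--Zygmund is \cref{prop: two point function estimates} together with the first half of the proof of \cref{prop:lwbound}; and the boost from positive probability to probability $\to 1$ is carried out by splitting at the first macroscopic $\oplus$ branch point and iterating (Lemmas~\ref{clm:clm3}--\ref{clm:clm4}), close to what you describe. Your identification of $\alpha_*(p)$ as the root of an explicit equation built from the dislocation measure is also correct (\cref{rem:expr-bounds}).

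Your upper bound takes a different route and leaves the decisive step open. You propose to close an induction on $\bbE[L_n^{\,s}]$ (or a related functional) through the Catalan recursion, but any bound that passes through $\max(a,b)^s\le a^s+b^s$ or, for $s\le 1$, $(a+b)^s\le a^s+b^s$ makes the $\oplus$ and $\ominus$ branches collapse to the same inequality; against the arcsine limit of $I/n$ the resulting relation $n^{s\beta}\le 2\,\bbE[I^{s\beta}]$ saturates exactly at $\beta=1$ and yields nothing. Extracting $\beta^*<1$ from the recursion requires a functional that genuinely separates $\max$ from $+$ at balanced splits, and you do not say which one.

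The paper bypasses the recursion for $L_n$ by a duality argument. Let $\eta$ be the discarding rule that at each $\ominus$ branch point keeps the side containing the actual LIS; then $\LIS(\sigma_n)$ is at most the number of sampled points $U_j$ that $\eta$ never discards. The paper bounds, \emph{uniformly over all $(\efrak,\sfrak)$--measurable rules $\eta$}, the probability that a fresh uniform point $U$ survives until its fragment has size below $\eps$: at any $\ominus$ split whose two children each carry mass $\ge(1-r)$ times the parent, the conditional probability that $U$ lands in whichever child $\eta$ decides to keep is at most $r$, regardless of $\eta$ (\cref{prop:first step2}); combined with a large-deviation count of how many such balanced $\ominus$ splits occur before the fragment shrinks below $\eps$ (\cref{prop:second step2}), this gives a survival probability $\le 2\eps^{\lambda^*(p)}$. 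Linearity of expectation plus Markov then finishes (\cref{prop:upbound}). This pointwise, rule-uniform estimate is the idea your sketch is missing; it sidesteps the distributional fixed point entirely and never has to control the $\max$.
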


See \cref{fig-Upper-lower-bound2} for a plot of the graphs of the two functions $\alpha_*(p)$ and $\beta^*(p)$ from \cref{thm:upper_lower_perm} and a table of some of their values.  
See also \cref{rem:expr-bounds} at the end of this section for explicit formulas for these two functions.

\begin{figure}[ht!]
	\begin{minipage}{8.7cm}
		\includegraphics[width=\textwidth]{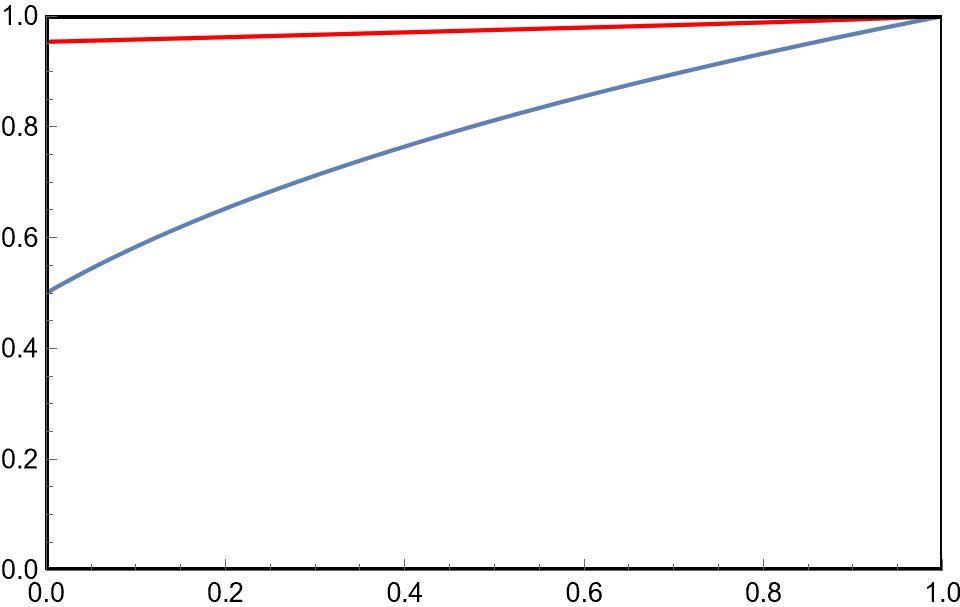}
	\end{minipage}
	\hspace{0.2cm}
	\begin{minipage}{9cm}
		\begin{tabular}{ |P{2cm}||P{2cm}|P{2cm}| }
			\hline
			\multicolumn{3}{|c|}{Numerical values for $\alpha_*(p)$ and $\beta^*(p)$} \\
			\hline
			$p$ & $\alpha_*(p)$ & $\beta^*(p)$\\
			\hline
			0.1 & 0.584 & 0.959\\
			0.2 & 0.653 & 0.963\\
			0.3 & 0.712 & 0.967\\
			0.4 & 0.765 & 0.971\\
			0.5 & 0.812 & 0.975\\
			0.6 & 0.855 & 0.980\\ 
			0.7 & 0.895 & 0.985\\
			0.8 & 0.932 & 0.991\\ 
			0.9 & 0.967 & 0.996\\
			\hline
		\end{tabular}
	\end{minipage}
	\caption{\label{fig-Upper-lower-bound2} \textbf{Left:} The plot of our bounds from \cref{thm:upper_lower_perm} as functions of $p\in(0,1)$: $\alpha_*(p)$ is in blue and $\beta^*(p)$ is in red. \textbf{Right:} Some numerical values of the bounds $\alpha_*(p)$ and $\beta^*(p)$.}
\end{figure}

We expect that $\LIS(\Perm(\bm{\mu}_p,n)) = n^{d(p) + o(1)}$ with probability tending to 1 as $n\to\infty$, for some exponent $d(p)\in[\alpha_*(p),\beta^*(p)]$. Numerical simulations suggest that $d(p)$ should be very close to $\alpha_*(p)$ for all $p\in(0,1)$. See \cref{conj:first-conj} below for a more precise statement and \cref{sect:num-sim} for more details on the numerical simulations.

We also expect that it is possible to transfer the bounds of \cref{thm:upper_lower_perm} to uniform separable permutations, see the text just before \cref{conj:sep-and-cograph} for further discussion.

\cref{thm:upper_lower_perm} shows that the exponent $d(p)$ is strictly bigger than $1/2$ and strictly smaller than $1$ for all $p\in(0,1)$. Note that the results (and techniques) in \cite{bassino-lis} are insufficient to establish either of the two bounds. Indeed, their results show that $\LIS(\Perm(\bm{\mu}_p,n))$ is sublinear in $n$ but, for instance, they do not exclude a potential behavior such as $n/\log(n)$.

Note also that our results give the first proof that the growth rate of $\LIS(\Perm(\bm{\mu}_p,n))$ depends on $p$, at least to some extent. This is because $\alpha_*(p) > \beta^*(\wt p)$ when $p$ is close to 1 and $\wt p$ is close to zero. 

We conclude this section by giving the explicit description of the functions $\alpha_*(p)$ and $\beta^*(p)$. 

\begin{remark}\label{rem:expr-bounds}
	For all $p\in(0,1)$,
	\begin{equation*}\label{eq:alpha_lam}
		\alpha_*(p)=1-\lstar(p),
	\end{equation*}
	where $\lstar(p)$ is the only positive solution (see \cref{eq:lapl_reg} for further explanations) to the equation $\Phi^\sfS(-\lstar(p)) = -2(1-p) \sqrt{\frac{2}{\pi}}$, with 
	\[\Phi^\sfS(q)=\int_0^{\infty} (1-\mathrm{e}^{-qx})(\mathds{1}_{x\in (0, \log 2]}+p\mathds{1}_{x\in (\log 2, +\infty)}) \frac{2\,\mathrm{e}^x\mathrm{d}x}{\sqrt{2\pi(\mathrm{e}^x-1)^3}}.\]
	Note that $\alpha_*(0^+) = 1/2$ and $\alpha_*(1^-) = 1$, and $\alpha_*(p)$ is strictly increasing in $p\in(0,1)$. In the symmetric case ($p=1/2$), we get $\alpha_*(1/2) \approx 0.812$.
	On the other hand, for all $p\in(0,1)$,
	\begin{equation*}
		\beta^*(p)=1-\lambda^*(p),
	\end{equation*}
	where $\lambda^*(p)=\sup_{\beta\in(0,\log(2)),\delta>0}\min\left\{\beta\delta\,,\,\sup_{\gamma<0}\{\gamma \delta + \kappa^{*}_{\gamma,\mathrm{e}^{-\beta}}(p)\}\right\}$ and $\kappa^{*}_{\gamma,r}(p)$ is the only \emph{positive} solution (see the discussion below \eqref{eq: sec 4 def kappa} for further explanations) to the equation 
	\begin{equation}\label{eq:expr_kappa}
		\Phi(-\kappa^{*}_{\gamma,r}(p))-2 (1-p)(1-\mathrm{e}^{\gamma})
		\sqrt{\frac{2}{\pi}}\frac{r^{-1}-2}{\sqrt{r^{-1}-1}}=0,
	\end{equation} 
	where $\Phi(q) = 2\sqrt{2} \frac{\Gamma(q+1/2)}{\Gamma(q)}$. 
	See \cref{sect:proof-tech} for some explanations on the origin of the latter expressions. 
\end{remark}

\subsection{Brownian cographon results}\label{sect:graphs}

\subsubsection{Graphons and the Brownian cographons}\label{sect:grphons}

A \textbf{graphon} is an equivalence class of measurable functions $W:[0, 1]^2 \to \{0, 1\}$ which are symmetric (\textit{i.e.}\ $W(x,y)=W(y,x)$ for all $x,y\in[0,1]$), under the equivalence relation $\sim$, where $W \sim U$ if there exists an invertible, measurable, Lebesgue measure preserving function $\phi : [0, 1] \to [0, 1]$ such that $W(\phi(x), \phi(y)) = U(x, y)$ for almost every $x, y \in [0, 1].$

Intuitively, a graphon is a continuous analog of the adjacency matrix of a graph, viewed up to relabeling its continuous vertex set. To every graph $G$ with $n$ labeled vertices, one can naturally associate a corresponding graphon:
\begin{align*}
	W_G:  &[0, 1]^2 \to \{0, 1\},\\
	&(x,y)\mapsto\,\,\,A_{\lceil nx \rceil,\lceil ny \rceil},
\end{align*}
where $A$ is the adjacency matrix of the graph $G$. Note that any relabeling of the vertices of $G$ gives the same graphon $W_G$, and so the definition extends to unlabeled graphs.
It is possible to define the so-called \emph{cut metric}, first on functions and then on graphons. The cut metric induces a notion of convergence for graphons (and so for graphs). Roughly speaking, the graphon convergence is the convergence of the rescaled adjacency matrix with respect to the cut metric. Graphon convergence has been first studied in \cite{borgs2008convergent} and developed into a vast topic in graph combinatorics, see
\cite{lovasz2012large} for an overview of this field of research.

\medskip

Given the signed excursion $(\efrak, \sfrak,p)$ introduced in \cref{sect:broe_sep}, the \textbf{Brownian cographon} $\bm{W}_p$ of parameter $p\in[0,1]$ is defined (following \cite{bassino2022random}) as the equivalence class of random functions 
\begin{align}\label{eq:Brownian-cog}
	\bm{W}_p: &[0, 1]^2 \to \{0, 1\},\\
	&(x,y)\mapsto\,\,\,\mathds{1}_{\sfrak(\ell_{x,y})=\oplus}, \notag
\end{align}
where, as in \cref{sect:broe_sep}, if $(x,y)\in\mcl R_\efrak^2$ and $x<y$, we denote by $\ell_{x,y}\in[x,y]$ the unique strict local minimum $\min_{[x,y]}\efrak$. If $x$ or $y$ is not in $\mcl R_\efrak$ then we arbitrarily set $\sfrak(\ell_{x,y})=\oplus$. This choice does not change the law of $\bm{W}_p$ because a.s.\ $\Leb([0,1]^2\setminus \mcl R_\efrak^2)=0$. We stress the fact that $\bm{W}_p$ is a \emph{random} graphon.

\medskip

A \textbf{cograph} is a graph avoiding the path with four vertices as induced subgraph (see the Wikipedia page for several other equivalent characterizations of cographs and their computational properties).
The Brownian cographon $\bm W_{1/2}$ has been proven to be the limit in the graphon sense of uniform random cographs when the number of vertices tends to infinity
\cite{bassino2022random,stufler2021graphon}. Some first universality results for the Brownian cographons have been recently established in \cite{lenoir2023graph}, but we expect more to come.

\subsubsection{The size of the largest homogeneous set}\label{sect:lhs}

An \textbf{independent set} of a graph $G$ is a subset of its vertices such that every two distinct vertices in the subset are not adjacent, while a \textbf{clique} of a graph $G$ is a subset of its vertices such that every two distinct vertices in the subset are adjacent. For a graph $G$, a subset of its vertices is called \textbf{homogeneous} if it is either an independent set or a clique. The \emph{Erd\H{o}s--Hajnal conjecture} \cite{erdos1977spanned,erdos1989ramsey} states
that every \textbf{$H$-free} graph of size $n$ (\textit{i.e.}\ a graph avoiding a given subgraph $H$ as induced subgraph) has a homogeneous set of polynomial size\footnote{We recall that every graph of size $n$ has a homogeneous set of size $\log(n)$, and this is optimal up to a constant. Determining the optimal constant $C$ such that every graph of size $n$ has a homogeneous set of size at least $C \log(n)$ is equivalent to the computation of diagonal Ramsey numbers, see \cite{campos2023exponential} for the best-known bounds}. in $n$, \textit{i.e.}\ of size $c\cdot n^{\alpha(H)}$ for some $\alpha(H)\in(0,1]$. This conjecture is still open, see \cite{chudnovsky2014erdos} for a survey and \cite{bucic2023loglog} for the best-known bound. 

We emphasize that cographs play a key role in this conjecture. Indeed, since a homogeneous set in a graph induces a cograph, and all cographs of size $n$ have a homogeneous set of size at least $\sqrt n$ \cite{erdos1989ramsey}, then the Erd\H{o}s--Hajnal conjecture takes the following equivalent form: Every $H$-free graph of size $n$ contains a cograph of polynomial size as an induced subgraph. In fact, this reformulation is one of the most classical ways to attack the conjecture, see for instance \cite{erdos1989ramsey}.

Our motivations for studying homogeneous sets in the Brownian cographons are multiple. Specifically, they come from the above reformulation of the Erd\H{o}s--Hajnal conjecture, from its probabilistic version \cite{loebl2010almost,kang2014most} discussed below, from the recent graphon convergence results towards the Brownian cographons mentioned earlier, and from the recent developments in \cite{bassino-lis}, also explained below.

The authors of \cite{loebl2010almost,kang2014most} established that for a large family of graphs $H$, a uniformly random $H$-free graph with $n$ vertices has with high probability a homogeneous set of \emph{linear size}. We highlight that here the homogeneous set has linear size and not only polynomial size as in the original version of Erd\H{o}s--Hajnal conjecture (\textit{i.e.}\ $\alpha(H)$=1).
When this holds, the graph $H$ is said
to have the \textbf{asymptotic linear Erd\H{o}s-Hajnal property} (see \cite{kang2014most} for a precise definition).
\cite[Section 5]{kang2014most} asked whether a uniform random cograph with $n$ vertices  has the asymptotic linear Erd\H{o}s-Hajnal property.\footnote{Indeed, cographs do not fit into the results of \cite{loebl2010almost,kang2014most}.} This question was answered negatively in \cite[Theorem 1.2]{bassino-lis}, where it was shown that the maximal size of a homogeneous set $\LHS(G_n)$ in a uniform random cograph $G_n$ with $n$ vertices converges to zero in probability when divided by $n$.
The authors left open the question of finding the exact order of magnitude of $\LHS(G_n)$ (see \cite[Remark 1.5]{bassino-lis}), pointing out that $\sqrt{n}$ is a trivial lower bound. Also in this setting, the results of \cite{bassino-lis} do not exclude a potential behavior such as $n/\log(n)$. The latter behavior is not entirely unexpected. Indeed, as pointed out in \cite[Section 1.2]{kang2014most}, $n/\log(n)$ is the asymptotic behavior for $\LHS(\widetilde G_n)$ when $\widetilde G_n$ is a uniform graphs avoiding the path  $P_3$ with three vertices (recall that cographs are graph avoiding the path $P_4$ with four vertices), and for instance graphs avoiding $P_3$ and $P_4$ have the same (exceptional) \emph{coloring number}\footnote{The \emph{coloring number} of a graph (see \cite[Definition 2]{loebl2010almost}) should not be confused with its \emph{chromatic number}.} $2$; see \cite[Section 1, p.\ 4]{loebl2010almost} for further details.

Our results in the next section imply explicit power-law bounds for the analog of $\alpha(H)$ for graphs sampled from the Brownian cographons (\cref{thm:upper_lower_graphons} and \cref{cor:upper_lower_graphons}), and we expect that the same bounds can be proven for uniform cographs (\cref{sext:conj}). The latter result would distinguish the behavior of $\LHS(\cdot)$ on uniform cographs and uniform graphs avoiding $P_3$.

\subsubsection{Main results}\label{sect:main_res_graphons}

Given a graphon $W$, we can consider the \textbf{random graph induced by $W$} of size $n$,
denoted by $\Graph(W,n)$ and defined as follows: consider $n$ vertices
$\{v_1, v_2, \dots, v_n\}$ and, let $(U_1, \ldots , U_n)$ be $n$ i.i.d.\ uniform random variables in $[0, 1]$.
We connect the vertices $v_i$ and $v_j$ with an edge if and only if $W(U_i,U_j)=1$.
This definition can be naturally extended to the case of random graphons (see \cite[Section 3.2]{bassino2022random} for further details). Note that $\Graph(W,n)$ converges in distribution to $W$ when $n$ tends to infinity, as shown in \cite[Lemma 3.9]{bassino2022random}.

\medskip

Recall that $\LHS(\cdot)$ denotes the size of the largest homogeneous set in a graph. We also denote by $\LIN(\cdot)$ the size of the largest independent set in a graph and by $\LCL(\cdot)$ the size of the largest clique in a graph (the latter two quantities are usually denoted by $\alpha(\cdot)$ and $\omega(\cdot)$ in the literature, but we preferred to adopt a different notation since it is more consistent with the one used in \cref{sect:perm} for permutations). See \cref{fig-simulations} for some simulations of the largest homogeneous set in $\Graph(\bm{W}_p,n)$.

\begin{thm}\label{thm:upper_lower_graphons}
	Let $p\in(0,1)$ and $\bm{W}_p$ be the Brownian cographon of parameter $p$. Let $\alpha_*(p)$ and $\beta^*(p)$ be as in \cref{thm:upper_lower_perm}. Then 
	\begin{itemize}
		\item for each $\alpha < \alpha_*(p)$ and each $\beta > \beta^*(p)$ it holds with probability tending to one as $n\to\infty$ that 
		\begin{equation*}
			n^\alpha \leq \LCL(\Graph(\bm{W}_p,n)) \leq n^\beta;
		\end{equation*}
		\item for each $\overline\alpha < \alpha_*(1-p)$ and each $\overline\beta > \beta^*(1-p)$ it holds with probability tending to one as $n\to\infty$ that 
		\begin{equation*}
			n^{\overline\alpha} \leq \LIN(\Graph(\bm{W}_p,n)) \leq n^{\overline\beta}.
		\end{equation*}
	\end{itemize} 
\end{thm}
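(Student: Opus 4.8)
The plan is to reduce \cref{thm:upper_lower_graphons} to \cref{thm:upper_lower_perm} by exploiting the tight structural coupling between the Brownian separable permuton $\bm\mu_p$ and the Brownian cographon $\bm W_p$: both are built from the \emph{same} signed excursion $(\efrak,\sfrak,p)$, and the sign $\sfrak(\ell_{x,y})$ that decides whether $x\vartriangleleft_{\efrak,\sfrak,p}y$ or $y\vartriangleleft_{\efrak,\sfrak,p}x$ is exactly the indicator deciding whether $\{x,y\}$ is an edge of $\bm W_p$. So first I would set up the coupling at the discrete level: sample the i.i.d.\ uniform points $U_1,\dots,U_n\in[0,1]$ used to define $\Graph(\bm W_p,n)$, and use the \emph{same} points (in both coordinates, i.e.\ the points $(U_i,\psi_{\efrak,\sfrak,p}(U_i))$) to define $\Perm(\bm\mu_p,n)$. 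This is legitimate because, as noted below \eqref{eq:Brownian-cog} and in \cref{sect:main_res_perm}, a.s.\ all $U_i$ lie in $\mcl R_\efrak$, on which $\vartriangleleft_{\efrak,\sfrak,p}$ is a total order; so after relabelling the $U_i$ in increasing order of their $x$-coordinate, we get a permutation $\bm\sigma=\Perm(\bm\mu_p,n)$ and a graph $\bm G=\Graph(\bm W_p,n)$ on the same index set $[n]$.

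The key observation is then purely combinatorial: for $i<j$ in $[n]$ (after the increasing relabelling), $\bm\sigma(i)<\bm\sigma(j)$ if and only if $U_i\vartriangleleft_{\efrak,\sfrak,p}U_j$ if and only if $\sfrak(\ell_{U_i,U_j})=\oplus$ if and only if $i\sim j$ in $\bm G$. Hence a subset $S\subseteq[n]$ is an increasing subsequence of $\bm\sigma$ \emph{exactly} when $S$ is a clique of $\bm G$, and a \emph{decreasing} subsequence of $\bm\sigma$ exactly when $S$ is an independent set of $\bm G$. Therefore $\LCL(\bm G)=\LIS(\bm\sigma)$ and $\LIN(\bm G)=\LDS(\bm\sigma)$ identically under the coupling, where $\LDS$ denotes the longest decreasing subsequence. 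The first bullet of the theorem is then immediate from \cref{thm:upper_lower_perm}. For the second bullet I would invoke the reflection symmetry of the construction: replacing every $\oplus$ by $\ominus$ and vice versa (equivalently, replacing $p$ by $1-p$) sends $\vartriangleleft_{\efrak,\sfrak,p}$ to its reverse, hence turns increasing subsequences into decreasing ones; concretely, $\LDS(\Perm(\bm\mu_p,n))\overset{d}{=}\LIS(\Perm(\bm\mu_{1-p},n))$. (Equivalently, one can note $\Perm(\bm\mu_p,n)$ composed with the reversal permutation $w_0$ has the law of $\Perm(\bm\mu_{1-p},n)$, since $\bm\mu_p$ reflected across a vertical axis is $\bm\mu_{1-p}$.) Combining this distributional identity with $\LIN(\Graph(\bm W_p,n))\overset{d}{=}\LDS(\Perm(\bm\mu_p,n))$ and applying \cref{thm:upper_lower_perm} with parameter $1-p$ yields the claimed bounds $n^{\overline\alpha}\le\LIN(\Graph(\bm W_p,n))\le n^{\overline\beta}$ for $\overline\alpha<\alpha_*(1-p)$ and $\overline\beta>\beta^*(1-p)$.

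The steps that require a little care, rather than the crux of the argument, are the measure-theoretic ones: justifying that under the coupling one genuinely has $\bm\sigma=\Perm(\bm\mu_p,n)$ in distribution (this uses that the $x$-coordinates of the $n$ sampled points are i.i.d.\ uniform, that $\psi_{\efrak,\sfrak,p}$ is the push-forward map defining $\bm\mu_p$, and that ties among $x$- or $y$-coordinates occur with probability zero), and confirming that the events $\{U_i\in\mcl R_\efrak\ \forall i\}$ and $\{$all pairwise minima are attained at unique strict local minima$\}$ have probability one, so that the combinatorial identification holds almost surely. I do not expect a genuine obstacle here: all of these facts are either stated in the excerpt (the properties of $\mcl R_\efrak$ in \cref{sect:broe_sep}, the convergence statements from \cite{bbfgp-universal,bassino2022random}) or are standard properties of the Brownian excursion, and the proof is essentially a translation between the two models via the shared coin sequence $\sfrak$.
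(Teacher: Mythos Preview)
Your proposal is correct and is essentially the paper's own argument: the paper proves \cref{lem:perm_to_graph} by exactly this coupling of $\Perm(\bm\mu_p,n)$ and $\Graph(\bm W_p,n)$ through the same signed excursion and uniform points, yielding $\LCL(\Graph(\bm W_p,n))\stackrel{d}{=}\LIS(\Perm(\bm\mu_p,n))$ and, via the $\oplus\leftrightarrow\ominus$ symmetry, $\LIN(\Graph(\bm W_p,n))\stackrel{d}{=}\LIS(\Perm(\bm\mu_{1-p},n))$, from which \cref{thm:upper_lower_graphons} follows immediately from \cref{thm:upper_lower_perm}. The only cosmetic difference is that you pass through $\LDS$ as an intermediate quantity, whereas the paper goes directly to $\LIS(\Perm(\bm\mu_{1-p},n))$.
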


In this paper, we will prove \cref{thm:upper_lower_perm} and derive \cref{thm:upper_lower_graphons} from it (see \cref{sect:sampling} for further explanations). We point out that one can equivalently directly prove \cref{thm:upper_lower_graphons} and then derive \cref{thm:upper_lower_perm}, \textit{i.e.} the two theorems are equivalent. We opted for the first strategy since we found slightly simpler to phrase some combinatorial constructions in term of permutations rather than graphs.

As in the case of \cref{thm:upper_lower_perm}, we expect that it is possible to transfer the bounds in \cref{thm:upper_lower_graphons} to uniform cographons. See the text just before \cref{conj:sep-and-cograph} for further discussion.

We have the following immediate consequence of \cref{thm:upper_lower_graphons}.

\begin{cor}\label{cor:upper_lower_graphons}
	Let $p\in(0,1)$ and $\bm{W}_p$ be the Brownian cographon of parameter $p$. Let $\alpha_*(p)$ and $\beta^*(p)$ be as in \cref{thm:upper_lower_perm}, then for each $\widetilde\alpha < \alpha_*(\widetilde p)$ and each $\widetilde\beta > \beta^*(\widetilde p)$ with $\widetilde p=\max\{p,1-p\}$, it holds with probability tending to one as $n\to\infty$ that 
	\begin{equation*}
		n^{\widetilde\alpha} \leq \LHS(\Graph(\bm{W}_p,n)) \leq n^{\widetilde\beta}.
	\end{equation*}
\end{cor}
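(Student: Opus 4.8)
The plan is to derive \cref{cor:upper_lower_graphons} directly from \cref{thm:upper_lower_graphons}, using the elementary identity $\LHS(G) = \max\{\LCL(G),\LIN(G)\}$, valid for every graph $G$. By the symmetry $p \leftrightarrow 1-p$ it suffices to treat the case $p \geq 1/2$, so that $\widetilde p = \max\{p,1-p\} = p$; the case $p < 1/2$ is handled identically after interchanging the roles of cliques and independent sets.

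\emph{Lower bound.} I would fix $\widetilde\alpha < \alpha_*(\widetilde p) = \alpha_*(p)$. The first bullet of \cref{thm:upper_lower_graphons} gives that, with probability tending to $1$ as $n\to\infty$, $n^{\widetilde\alpha} \leq \LCL(\Graph(\bm W_p,n))$. Since $\LCL(\Graph(\bm W_p,n)) \leq \LHS(\Graph(\bm W_p,n))$ holds deterministically, the claimed lower bound follows. Note that no monotonicity of $\alpha_*$ is needed at this step: one just invokes the bullet of \cref{thm:upper_lower_graphons} corresponding to whichever of $p$ or $1-p$ equals $\widetilde p$.

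\emph{Upper bound.} I would fix $\widetilde\beta > \beta^*(\widetilde p) = \beta^*(p)$. Here I additionally use that $\beta^*$ is nondecreasing on $(0,1)$ — a fact of the same nature as the monotonicity of $\alpha_*$ recorded in \cref{rem:expr-bounds}, which can be read off from the explicit description of $\beta^*$ given there (and is consistent with the table in \cref{fig-Upper-lower-bound2}). Since $p \geq 1/2$ this yields $\beta^*(1-p) \leq \beta^*(p) < \widetilde\beta$. Applying the first bullet of \cref{thm:upper_lower_graphons} with $\beta = \widetilde\beta$ and the second bullet with $\overline\beta = \widetilde\beta$, and combining these two ``with probability tending to $1$'' events by a union bound, I obtain that with probability tending to $1$ both $\LCL(\Graph(\bm W_p,n)) \leq n^{\widetilde\beta}$ and $\LIN(\Graph(\bm W_p,n)) \leq n^{\widetilde\beta}$, hence $\LHS(\Graph(\bm W_p,n)) = \max\{\LCL(\Graph(\bm W_p,n)),\LIN(\Graph(\bm W_p,n))\} \leq n^{\widetilde\beta}$.

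Since \cref{cor:upper_lower_graphons} is a soft consequence of \cref{thm:upper_lower_graphons}, there is essentially no obstacle beyond bookkeeping. The only point that genuinely requires attention is the monotonicity of $\beta^*$: without it one could only phrase the two-sided bound in terms of $\max\{\beta^*(p),\beta^*(1-p)\}$, and the clean form $\beta^*(\widetilde p)$ appearing in the statement relies on $\beta^*$ being monotone (so that this maximum is attained at $\widetilde p$). The analogous remark applies to $\alpha_*$, although — as noted above — the lower bound does not actually use its monotonicity.
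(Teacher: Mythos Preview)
Your argument is correct and matches the paper's (unwritten) reasoning: the paper simply states that \cref{cor:upper_lower_graphons} is an immediate consequence of \cref{thm:upper_lower_graphons} and gives no further details, so your derivation via $\LHS=\max\{\LCL,\LIN\}$ is exactly what is intended. You are also right to flag that the upper bound in the form stated implicitly uses the monotonicity of $\beta^*$ (so that $\max\{\beta^*(p),\beta^*(1-p)\}=\beta^*(\widetilde p)$); this is not recorded explicitly in the paper, but it can indeed be read off from the formulas in \cref{rem:expr-bounds}, since $\kappa^*_{\gamma,r}(p)$ is decreasing in $p$ (the right-hand side of \eqref{eq: sec 4 def kappa} scales with $1-p$), hence $\lambda^*(p)$ is nonincreasing and $\beta^*(p)=1-\lambda^*(p)$ is nondecreasing.
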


Note that our results in \cref{thm:upper_lower_graphons} also complement the results of McKinley \cite{mckinley2019superlogarithmic}, where the author constructed certain graphons $W_{\alpha}$ such that the size of the largest clique $\LCL(\Graph(W_\alpha,n))$ behaves like $n^\alpha$ for all $\alpha\in(0,1)$.

\subsection{Conjectures and potential extensions}\label{sext:conj}

The results in Theorems \ref{thm:upper_lower_perm}
and \ref{thm:upper_lower_graphons} establish upper and lower bounds for the polynomial growth of the longest increasing subsequence in permutations sampled from the Brownian separable permutons and for the largest clique and independent set in graphs sampled from the Brownian cographons. We expect the existence of a deterministic \emph{critical exponent} for these quantities (\emph{c.f.}\ \cref{lem:perm_to_graph}).

\begin{conj}\label{conj:first-conj}
	For all $p\in(0,1)$ there exists $d(p)\in[\alpha_*(p),\beta^*(p)]$ such that  
	with probability tending to 1 as $n\to\infty$
	\begin{equation*}
		\LIS(\Perm(\bm{\mu}_p,n))=n^{d(p)+o(1)},
	\end{equation*}
	\begin{equation*}
		\LCL(\Graph(\bm{W}_p,n))=n^{d(p)+o(1)}\qquad \text{and}\qquad \LIN(\Graph(\bm{W}_p,n))=n^{d(1-p)+o(1)}.
	\end{equation*}
\end{conj}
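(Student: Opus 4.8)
\emph{Proof plan.} The three assertions are equivalent, so I would reduce everything to the first. Building $\bm{\mu}_p$ and $\bm{W}_p$ from the same signed excursion $(\efrak,\sfrak,p)$ and using the same i.i.d.\ uniform sampling points for $\Perm(\bm{\mu}_p,n)$ and $\Graph(\bm{W}_p,n)$, one checks directly that a set of sampled points is an increasing subsequence of $\Perm(\bm{\mu}_p,n)$ if and only if every strict local minimum of $\efrak$ lying between two of these points carries a $\oplus$-sign, which is exactly the defining condition for that set to be a clique of $\Graph(\bm{W}_p,n)$. Hence $\LIS(\Perm(\bm{\mu}_p,n))\eqD\LCL(\Graph(\bm{W}_p,n))$, and replacing $\oplus$ by $\ominus$ (equivalently $p$ by $1-p$) turns increasing subsequences into decreasing ones and cliques into independent sets, so $\LIN(\Graph(\bm{W}_p,n))\eqD\LIS(\Perm(\bm{\mu}_{1-p},n))$. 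Thus it suffices to produce $d(p)$ with $\LIS(\Perm(\bm{\mu}_p,n))=n^{d(p)+o(1)}$ in probability; \cref{thm:upper_lower_perm} then automatically gives $d(p)\in[\alpha_*(p),\beta^*(p)]$.

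To construct $d(p)$ I would use the self-similar decomposition of $\bm{\mu}_p$ coming from the Bertoin fragmentation embedded in $\efrak$: conditionally on a root sign ($\oplus$ with probability $p$, $\ominus$ with probability $1-p$) and on the first-generation fragment masses $(V_1,V_2,\dots)$ with $\sum_i V_i=1$, the permuton $\bm{\mu}_p$ is obtained by placing independent rescaled copies $\bm{\mu}_p^{(i)}$ along the diagonal (if $\oplus$) or the antidiagonal (if $\ominus$) in consecutive blocks of sizes $V_i$. Writing $L_n:=\LIS(\Perm(\bm{\mu}_p,n))$ and letting $(N_i)$ be multinomial$(n;(V_i)_i)$, this yields the recursive distributional identity
\begin{equation*}
	L_n \;\eqD\; \begin{cases} \sum_i L_{N_i}^{(i)}, & \text{with probability } p,\\ \max_i L_{N_i}^{(i)}, & \text{with probability } 1-p,\end{cases}
\end{equation*}
with the $L^{(i)}$ i.i.d.\ copies of the whole family, independent of the $(N_i)$. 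Iterating until the tagged fragment of a typical sampled point has mass below $1/n$ --- which, since $-\log(\text{tagged mass})$ increases by an i.i.d.\ increment of finite mean at each generation, happens after $\Theta(\log n)$ generations --- exhibits $\log L_n$, up to an $O(1)$ error, as the value of a recursive \emph{sum/max} functional on a truncated branching-random-walk cascade with i.i.d.\ branching data.

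I would then prove a law of large numbers for this functional, in two steps. First, weak concentration: conditionally on $\bm{\mu}_p$, $L_n$ has bounded differences (of size $1$) in the $n$ sampled points, so McDiarmid's inequality gives fluctuations of order $\sqrt{n\log n}=o(n^{1/2})$ around $\bbE[L_n\mid\bm{\mu}_p]$, which is negligible since $L_n$ grows like $n^{d(p)}>n^{1/2}$; one then needs $\mathrm{Var}\big(\log\bbE[L_n\mid\bm{\mu}_p]\big)=o((\log n)^2)$, which I would extract from a spine / many-to-one analysis of the i.i.d.\ recursive structure above. Second, superadditivity: using the decomposition to coarse-grain the $nm$-point cascade at mass-scale $1/n$, one sees $\approx n$ ``super-fragments'' each carrying an independent $\approx m$-point copy, arranged like the $n$-point cascade; a $\vartriangleleft$-chain of length $\approx L_n$ through the super-fragments collects $\approx L_m$ points in each, which (together with the lower-tail estimates from the first step) gives $g(nm)\ge g(n)+g(m)-o(\log(nm))$ for $g(n):=\bbE[\log L_n]$. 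Fekete's lemma then produces $d(p):=\lim_n g(n)/\log n$, and weak concentration upgrades this to $\log L_n/\log n\to d(p)$ in probability; \cref{thm:upper_lower_perm} pins $d(p)$ into $[\alpha_*(p),\beta^*(p)]$.

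The main obstacle is the mixture of the two operations in the recursion: along $\ominus$-branches the functional is a maximum, so it is not additive along root-to-leaf paths and neither Kingman's subadditive ergodic theorem nor Biggins's theorem for branching random walk applies directly. Making the superadditivity error genuinely $o(\log n)$ forces one to show that the ``$\max$ over branches'' never creates a macroscopic gap between $\liminf$ and $\limsup$ of $\log L_n/\log n$ --- which is essentially equivalent to lower-tail concentration of $L_n$, so the two steps above have to be bootstrapped together (e.g.\ combining McDiarmid with a branching large-deviations analysis of the recursion to obtain polynomially small lower-tail bounds). A secondary, more routine difficulty is that $\bm{\mu}_p$ is infinite-dimensional, so all concentration estimates over $(\efrak,\sfrak)$ must be run after truncating the fragmentation at a large finite generation and discretizing the excursion.
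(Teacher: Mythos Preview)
The statement you are attempting to prove is \emph{Conjecture}~\ref{conj:first-conj}: the paper does not prove it, and it is recorded as open. So there is no ``paper's own proof'' to compare against. Your first paragraph (reducing the three assertions to the one for $\LIS$) is correct and is exactly \cref{lem:perm_to_graph}; the only question is whether your plan for producing $d(p)$ can be carried out.

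Two concrete issues. First, the recursive decomposition of $\bm\mu_p$ at the ``root'' is binary, not over an infinite sequence $(V_1,V_2,\dots)$: the global minimum of $\efrak$ carries a single sign and splits $[0,1]$ into two pieces of lengths $(V,1-V)$, and the restriction of $(\efrak,\sfrak)$ to each piece is (after rescaling) an independent signed excursion. The correct recursion is $L_n \eqD L_{N}^{(1)}+L_{n-N}^{(2)}$ on a $\oplus$ root and $L_n \eqD \max\{L_{N}^{(1)},L_{n-N}^{(2)}\}$ on a $\ominus$ root, with $N\sim\mathrm{Bin}(n,V)$ given $V$. This does not change the flavor of your plan, but the infinite-fragment picture you wrote does not correspond to any stopping line of the fragmentation that carries a single sign.

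Second, and more seriously, the superadditivity step is circular in exactly the way you flag. To get $g(nm)\ge g(n)+g(m)-o(\log(nm))$ from your coarse-graining, you need that \emph{most} of the $\approx n$ super-fragments visited by a near-optimal $\vartriangleleft$-chain contribute at least $m^{d(p)-o(1)}$ each; this is a quantitative lower-tail bound of the form $\bbP(L_m \le m^{d(p)-\zeta}) \le m^{-c(\zeta)}$ for some $c(\zeta)>0$, summed over $\approx n$ independent copies. That bound is strictly stronger than the law of large numbers you are trying to establish. The McDiarmid step only controls $L_n$ \emph{conditionally on} $\bm\mu_p$; the randomness of $(\efrak,\sfrak)$ itself --- which is where the sum/max recursion lives --- is untouched, and your claim $\mathrm{Var}(\log\bbE[L_n\mid\bm\mu_p])=o((\log n)^2)$ is doing all the work without an argument. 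The paper's lower bound (\cref{prop:lwbound}) does prove a lower-tail statement, but only $\bbP(L_n\ge c\,n^{\alpha_*(p)-\zeta})\to 1$ with no polynomial rate, and the zero--one bootstrap used there does not upgrade $\alpha_*(p)$ to the unknown optimal exponent. Closing this loop is precisely the missing idea, and is why the statement is stated as a conjecture rather than a theorem.
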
 

As already mentioned, numerical simulations suggest that $d(p)$ should be very close to $\alpha_*(p)$ for all $p\in(0,1)$. See \cref{sect:num-sim} for more details. Various heuristic arguments  indicate that our lower bound $\alpha_*(p)$ should \emph{not} be sharp for all $p\in(0,1)$, \textit{i.e.} it should hold that $\alpha_*(p)<d(p)$  for all $p\in(0,1)$.

\medskip

We expect that our estimates for longest increasing subsequences and largest homogeneous sets from Theorems~\ref{thm:upper_lower_perm} and~\ref{thm:upper_lower_graphons} in the case when $p=1/2$ can be transferred to the setting of uniform separable permutations and uniform cographs, respectively. To explain how this might be accomplished, we recall that both uniform separable permutations and uniform cographs can be encoded by random walk excursions with i.i.d. steps and some collections of random signs $\oplus/\ominus$, in a similar way as the Brownian separable permutons and the Brownian cographons can be encoded by a signed Brownian excursion. 

The \emph{KMT coupling theorem} \cite{komlos1975approximation,zaitsev-kmt} states that one can construct a random walk with i.i.d.\ steps and a standard Brownian motion on the same probability space in such a way that with high probability, the difference of their values at each time $n$ is $O(\log n)$. To transfer our bounds to the setting of uniform separable permutons and uniform graphons, a natural approach would be to couple the encoding walks for (the infinite-volume version of) the discrete models with the encoding Brownian motion for (the infinite-volume version of) the Brownian models via KMT. One could then use the coupling to transfer estimates for Brownian motion to estimates for random walk, and finally use local absolute continuity arguments to transfer from an unconditioned walk to a random walk excursion. We point out that a similar approach has been used to prove estimates for random planar maps in~\cite{ghs-map-dist}.  
However, we expect the above KMT coupling argument to require a fair amount of technical work, so we do not carry it out in this paper. 

It would also be interesting to deal with the discrete models themselves to come up with power-law bounds (or, even better, with the exact exponents) for the length of the longest increasing sequence in uniform separable permutations and the size of the largest homogeneous set in uniform cographs.

In fact, we expect an even stronger relationship between discrete and continuum models. 

\begin{conj}\label{conj:sep-and-cograph}
	Let $d(1/2)\in[\alpha_*(1/2),\beta^*(1/2)]$ be as in \cref{conj:first-conj}. Let $\sigma_n$ and $G_n$ be a uniform separable permutation and a uniform separable cograph of size $n$, respectively. Then the following holds with probability tending to 1 as $n\to\infty$:
	\begin{equation*}
		\LIS(\sigma_n)=n^{d(1/2)+o(1)}\qquad\text{and}\qquad \LHS(G_n)=n^{d(1/2)+o(1)}.
	\end{equation*}
\end{conj}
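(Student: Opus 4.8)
The plan is to prove a \emph{transfer principle}: the polynomial growth exponent of $\LIS$ (resp.\ $\LHS$) for the discrete models coincides with that of the Brownian separable permuton $\bm{\mu}_{1/2}$ (resp.\ Brownian cographon $\bm{W}_{1/2}$), so that \cref{conj:sep-and-cograph} follows by combining this principle with the conjectured existence of $d(1/2)$ from \cref{conj:first-conj}. Concretely, I would show that for each $\alpha<\alpha_*(1/2)$ one has $\LIS(\sigma_n)\ge n^\alpha$ with high probability, for each $\beta>\beta^*(1/2)$ one has $\LIS(\sigma_n)\le n^\beta$ with high probability, and --- more to the point --- that $\log\LIS(\sigma_n)/\log n$ and $\log\LIS(\Perm(\bm{\mu}_{1/2},n))/\log n$ have the same limit whenever one of them converges. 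The cograph statement would then follow either by repeating the argument or by the same reduction used to deduce \cref{thm:upper_lower_graphons} from \cref{thm:upper_lower_perm}.

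First I would fix the combinatorial encodings. A uniform separable permutation of size $n$ is encoded by its substitution decomposition tree with internal nodes decorated by $\oplus/\ominus$ signs, and (after a harmless modification handling the constraint that two internal nodes of the same sign are never adjacent, \emph{cf.}\ \cite{bbfs-tree-sep,bbfgp-universal}) this object is a conditioned Galton--Watson tree with i.i.d.\ sign decorations whose rescaled coding walk converges to the Brownian excursion; a uniform cograph is encoded analogously by its decorated cotree. In both cases $\LIS$ (resp.\ $\LHS$) admits a recursive description along the decorated tree that is the exact discrete analogue of the order $\vartriangleleft_{\efrak,\sfrak,p}$ and the embedded fragmentation used in the continuum. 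This reduces the question to a quantitative comparison between a discrete fragmentation process on the decorated conditioned tree and Bertoin's (2002) fragmentation embedded in the Brownian excursion --- precisely the object that drives the proof of \cref{thm:upper_lower_perm}.

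Next I would run the comparison through a KMT-type coupling \cite{komlos1975approximation,zaitsev-kmt}. Couple the i.i.d.-step coding walk of the discrete model with a standard Brownian motion so their values differ by $O(\log n)$ uniformly on $[0,n]$, then transfer from the unconditioned walk to the excursion by a local absolute continuity argument (conditioning on returning to the origin at time $n$ while staying positive), in the spirit of \cite{ghs-map-dist}. On mesoscopic scales --- subintervals of the coding walk of length between $n^{\delta}$ and $n$ --- the $O(\log n)$ error is negligible, so the fragment sizes, the signs carried by each fragment, and the branching structure of the fragmentation tree all match the continuum up to $n^{o(1)}$ factors, and every estimate underlying the upper and lower bounds in \cref{thm:upper_lower_perm} transfers verbatim. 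On microscopic scales one cannot use the Brownian approximation, but here crude deterministic bounds suffice: a subtree spanning at most $n^{\delta}$ leaves contributes at most $n^{\delta}$ to $\LIS$ and at least $1$, so taking $\delta$ small absorbs the microscopic contribution into the $n^{o(1)}$ error. Reassembling the recursive lower-bound construction (a greedy selection of an increasing subsequence following favorable branches of the fragmentation tree) and the upper-bound union bound of \cref{thm:upper_lower_perm} then yields the same exponents $\alpha_*(1/2)$ and $\beta^*(1/2)$ for $\sigma_n$, hence the same $d(1/2)$.

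I expect the main obstacle to be the interface between scales. Both $\LIS$ and $\LHS$ are global functionals that nonetheless depend on the tree structure down to individual leaves, so the discrete-to-continuum comparison must be quantitative and uniform across all $O(\log n)$ dyadic scales at once; controlling how the KMT errors accumulate through the successive levels of the fragmentation --- and checking that they do not shift the exponent --- is the delicate point. A secondary difficulty is bookkeeping: the separable-permutation tree is not literally a Galton--Watson tree, so one must either work with the correct (bivariate) offspring law or pass to the equivalent models of \cite{bbfs-tree-sep,bassino2022scaling} for which the excursion convergence and the coupling are cleanest; and, as already noted, the statement is conditional on the existence of $d(1/2)$, so strictly one establishes the transfer principle and then invokes \cref{conj:first-conj}.
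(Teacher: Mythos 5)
The statement you are addressing is \cref{conj:sep-and-cograph}, which is an open conjecture: the paper offers no proof of it, and indeed explicitly states that the KMT-coupling strategy you describe ``would require a fair amount of technical work,'' which is why the authors do not carry it out. Your proposal is essentially a restatement of the programme the authors themselves sketch in \cref{sext:conj} (KMT coupling of the coding walk with Brownian motion, local absolute continuity to pass to the excursion, transfer of the fragmentation estimates), so it cannot be assessed as a correct proof --- it is a plausible outline with several genuine gaps. First, and most fundamentally, even a complete transfer principle would only yield the bounds $n^{\alpha_*(1/2)-o(1)}\le \LIS(\sigma_n)\le n^{\beta^*(1/2)+o(1)}$; the conjecture asserts the existence of a single exponent $d(1/2)$, which rests on \cref{conj:first-conj}, itself open. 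You acknowledge this, but it means the argument is conditional on an unproved statement and hence not a proof.

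Second, the technical steps you sketch have real holes. The KMT coupling controls the sup-norm distance between the coding walk and Brownian motion, but $\LIS$ is a highly non-Lipschitz functional of the \emph{signed} excursion: the signs sit at local minima, and an $O(\log n)$ perturbation of the walk can reshuffle which local minima separate which pairs of points, hence which signs govern their relative order. Establishing that the fragmentation tree and its decorations ``match up to $n^{o(1)}$ factors'' is precisely the hard part, not a consequence of the coupling. Third, your treatment of microscopic scales is insufficient: bounding the contribution of a single subtree with at most $n^{\delta}$ leaves by $n^{\delta}$ does not control the \emph{aggregate} contribution of the order-$n^{1-\delta}$ such subtrees, whose union could in principle carry an increasing subsequence of length comparable to $n^{1-\delta}\cdot n^{\delta h}$ for some $h>0$ and thereby shift the exponent; one needs a genuine multi-scale argument showing that long increasing subsequences cannot be assembled from many microscopic pieces. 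Until these points are resolved --- and until \cref{conj:first-conj} is proved --- the statement remains a conjecture.
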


It also seems possible that our methods could be extended to some models of uniform permutations in substitution-closed classes which are well-behaved with respect to the substitution decomposition \cite{bbfgp-universal,bbfs-tree-sep} and to uniform graphs in some classes of graphs which are closed under the substitution operation at the core of the modular decomposition \cite{bassino2022random}.

\subsubsection{Skew Brownian permuton and random planar maps}\label{sect:more-mot}

We now explain certain connections with other models of random permutations, with random geometry, and with the study of planar maps. 

Recently, the first author of this paper constructed in \cite{borga-skew-permuton} a two-parameter family of universal random permutons $\bm{\mu}_{\rho,q}$  -- indexed by $(\rho,q)\in (-1,1]\times[0,1]$ and called \emph{skew Brownian permutons} -- and showed that they are the limits of various models of random permutations \cite{bm-baxter-permutation,borga-strong-baxter}. Additionally, skew Brownian permutons are connected with multiple models of decorated planar maps and SLE-decorated Liouville quantum gravity spheres (see \cite[Section 1.5]{borga-skew-permuton}). 

As shown in \cite[Theorem 1.12]{borga-skew-permuton}, the Brownian separable permutons $\bm{\mu}_p$ studied in this paper coincide with the skew Brownian permutons $\bm{\mu}_{1,1-p}$. The skew Brownian permuton $\bm{\mu}_{-1/2,1/2}$ is the \emph{Baxter permuton} \cite{bm-baxter-permutation}, \textit{i.e.}\ the permuton limit of uniform Baxter permutations. Additionally, in \cite[Section 1.6, Item 6]{borga-skew-permuton} it is conjectured that  $\bm{\mu}_{-1,q}:=\lim_{\rho\to-1}\bm{\mu}_{\rho,q}$ are the Mallows permutons \cite{starr2009thermodynamic, MR3817550}, \textit{i.e.}\ the permuton limits of Mallows permutations. In particular, $\lim_{\rho\to-1}\bm{\mu}_{\rho,1/2}$ should be the uniform Lebesgue measure on $[0,1]^2$, \textit{i.e.}\ the permuton limit of uniformly random permutations.\footnote{Recall from \cite{starr2009thermodynamic} that the Mallows permutons $(\nu(\beta))_{\beta\in \mathbb{R}}$ describe the permuton limit of $q$-Mallows distributed permutations of size $n$ when $q$ scales as $1-\frac{\beta}{n}$. To the best of our knowledge, there are no available conjectures for the exact relation between the $\beta$-parameter for the Mallows permutons $\nu(\beta)$ and the $q$-parameter for the permutons $\bm{\mu}_{-1,q}:=\lim_{\rho\to-1}\bm{\mu}_{\rho,q}$, apart from the specific case $\bm{\mu}_{-1,1/2}=\nu(0)$, where we recall that $\nu(0)$ coincides with the uniform Lebesgue measure on $[0,1]^2$.} See \cref{fig-skew-perm} for a schematic summary.

Combining the results of this paper with the fact that it is known that the length of the longest increasing subsequence in Mallows permutations behaves\footnote{Note that this result is true in the regime when Mallows permutations exhibit a nontrivial permuton limit (\textit{i.e.} when $q$ scales as $1-\frac \beta n$)}, which is the regime of interest to us. like $\sqrt n$ \cite{Starr-lis-mallows}, we propose the following conjecture. Recall the exponent $d(p)$ from \cref{conj:first-conj}.

\begin{conj}\label{conj:random-geom}
	For all $(\rho,q)\in [-1,1]\times(0,1)$, let $\bm{\mu}_{\rho,q}$ be the skew Brownian permuton. There exists a function $\ell(\rho,q):[-1,1]\times(0,1)\to[1/2,1)$ such that with probability tending to 1 as $n\to \infty$,  
	\begin{equation*}
		\LIS(\Perm(\bm{\mu}_{\rho,q},n))=n^{\ell(\rho,q)+o(1)} .
	\end{equation*} 
	Moreover, $\ell(-1,q)=1/2$ for all $q\in (0,1)$, $\ell(1,q)=d(1-q)\in[\alpha_*(1-q),\beta^*(1-q)]$ for all $q\in (0,1)$, and $\ell(\rho,q)$ is continuous, non-increasing in $q$ and non-decreasing in $\rho$.
\end{conj}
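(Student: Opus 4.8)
The plan is to attack \cref{conj:random-geom} in three stages: (i) establish existence of the exponent $\ell(\rho,q)$ together with an implicit characterization of it, (ii) evaluate the two extreme fibers $\rho=1$ and $\rho=-1$, and (iii) deduce continuity and monotonicity from the implicit characterization together with a monotone coupling of the skew Brownian permutons. The guiding principle is that the whole machinery behind \cref{thm:upper_lower_perm} -- extracting a fragmentation process and a Malthusian exponent from the branching structure of a Brownian excursion -- should admit an analogue for the coalescent-walk construction of $\bm{\mu}_{\rho,q}$ from a two-dimensional $\rho$-correlated Brownian excursion, with the sign/coin parameter $q$ playing the role that $p$ plays in the separable case.

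For the existence of $\ell(\rho,q)$ I would exploit the Markovian, approximately self-similar structure of the driving $2$D excursion. Cutting at a well-chosen branch point -- for instance the running minimum of one coordinate, or the first time a coordinate reaches a prescribed level -- decomposes $\Perm(\bm{\mu}_{\rho,q},n)$ into a bounded number of conditionally independent sub-permutations whose sizes are governed by an associated fragmentation, glued through a ``quotient'' pattern. Writing $\LIS$ of the glued object in terms of the $\LIS$ of the pieces should again produce a submultiplicative/supermultiplicative pair of inequalities in the number of ``profitable'' blocks, from which one reads off a Biggins-type martingale and a Malthusian exponent $\lambda(\rho,q)$; one then sets $\ell(\rho,q)=1-\lambda(\rho,q)$ and reruns the large-deviations argument of the paper to upgrade moment bounds to the almost-sure statement $\LIS(\Perm(\bm{\mu}_{\rho,q},n))=n^{\ell(\rho,q)+o(1)}$. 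The main obstacle here is that, unlike the separable case $\rho=1$ where the decomposition is purely into direct and skew sums (so that $\LIS$ is either a sum or a max of the pieces), for $\rho\neq1$ the quotient at a branch point need not be trivial, so identifying the right branching data and the effective recursion for $\LIS$ requires genuinely new combinatorial input; it is conceivable that this only yields upper and lower exponents $\alpha_*(\rho,q)\le\beta^*(\rho,q)$, exactly as in \cref{thm:upper_lower_perm}, rather than a single $\ell$, in which case the conjecture would also need the analogue of \cref{conj:first-conj}.

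The fiber $\rho=1$ is essentially free: by \cite[Theorem~1.12]{borga-skew-permuton} one has $\bm{\mu}_{1,q}=\bm{\mu}_{1-q}$, so $\ell(1,q)=d(1-q)\in[\alpha_*(1-q),\beta^*(1-q)]$ by \cref{thm:upper_lower_perm} and \cref{conj:first-conj}. For the fiber $\rho=-1$ the cleanest route avoids the (still open) identification of $\bm{\mu}_{-1,q}$ with a Mallows permuton: one shows directly that $\bm{\mu}_{-1,q}=\lim_{\rho\to-1}\bm{\mu}_{\rho,q}$ is absolutely continuous with a density bounded above and below -- or at least satisfying the regularity hypotheses of \cite{deu-lim,deuschel1999increasing} -- which forces $\LIS(\Perm(\bm{\mu}_{-1,q},n))=\Theta(\sqrt{n})$ and hence $\ell(-1,q)=1/2$. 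Alternatively, granting the conjectural identity $\bm{\mu}_{-1,q}=\nu(\beta(q))$, one transfers the bound $\LIS\asymp\sqrt{n}$ for Mallows permutations \cite{Starr-lis-mallows} from the discrete model to $\Perm(\nu(\beta),n)$ through a KMT-type local absolute continuity argument, analogous to the transfer discussed just before \cref{conj:sep-and-cograph}.

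Continuity and monotonicity are the part I expect to be hardest. Monotonicity in $q$ (non-increasing) and in $\rho$ (non-decreasing) should follow from a coupling of $\{\bm{\mu}_{\rho,q}\}$ on one probability space in which decreasing $\rho$ or increasing $q$ deterministically shortens increasing subsequences of the induced permutations; the natural candidate is to couple the underlying $2$D excursions and sign sequences monotonically -- which within the separable family is already achievable via the $p$-coins and yields monotonicity of $\alpha_*,\beta^*$ in $p$ from the explicit formulas in \cref{rem:expr-bounds} -- but whether such a coupling propagates to a monotone statement about $\LIS$ when $\rho<1$ (for instance for the Baxter fiber $\rho=-1/2$) is unclear and is the crux of the conjecture. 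Continuity of $\ell$ would then be read off from continuity in $(\rho,q)$ of the fragmentation data entering the Malthusian-exponent equation, with extra care near $\rho=-1$, where the object degenerates from fractal to absolutely continuous and the exponent equation must be checked to match the boundary value $1/2$; a plausible supplement is to combine weak continuity of $(\rho,q)\mapsto\bm{\mu}_{\rho,q}$ with a semicontinuity estimate for $\LIS$ under permuton convergence to pin down the behavior at the endpoints.
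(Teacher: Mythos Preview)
The statement you are addressing is \emph{Conjecture}~\ref{conj:random-geom}: the paper does not prove it and does not claim to. There is therefore no ``paper's own proof'' to compare against; the paper only motivates the conjecture by pointing to the identification $\bm{\mu}_{1,q}=\bm{\mu}_{1-q}$, the expected Mallows limit at $\rho=-1$, and the link to directed metrics on planar maps. Your text is not a proof either --- as you yourself flag at several places --- but a research programme.

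The gaps you identify are real and, at present, decisive. First, the existence of a single exponent $\ell(\rho,q)$ is open even on the fiber $\rho=1$: that is precisely \cref{conj:first-conj}, and the paper only produces the bracket $[\alpha_*(p),\beta^*(p)]$. Your stage~(i) assumes a clean recursive decomposition of $\Perm(\bm{\mu}_{\rho,q},n)$ into conditionally independent pieces with a tractable gluing rule for $\LIS$; for $\rho\ne 1$ no such decomposition is known, and the coalescent-walk construction does not obviously yield one (the ``quotient pattern'' at a branch point is neither a direct nor a skew sum, so $\LIS$ is not a simple functional of the pieces). Second, the monotone coupling you propose for stage~(iii) is the heart of the matter and is not available: even within the separable family $\rho=1$, the monotonicity of the actual exponent $d(p)$ in $p$ is not established in the paper (only the monotonicity of the explicit bounds $\alpha_*$ and $\beta^*$ follows from \cref{rem:expr-bounds}), and there is no known coupling across $\rho$ that makes $\LIS$ monotone. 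Third, the $\rho=-1$ endpoint relies either on an unproved identification with Mallows permutons or on regularity of the density of $\bm{\mu}_{-1,q}$ that has not been established.

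In short: your outline is a reasonable wish list of what a proof would need, but none of its load-bearing steps are currently available, and the paper makes no claim to supply them.
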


We point out that in \cite[Corollary 1.13]{bgs-meander} it was shown that $\LIS(\Perm(\bm{\mu}_{\rho,q},n))$ is sublinear for all $(\rho,q)\in (-1,1)\times(0,1)$.
The exponents $\ell(\rho,q)$ of \cref{conj:random-geom} should also be related to certain directed metrics on random planar maps, see \cite[Remark 1.15]{bgs-meander} for further details. In \cref{fig-skew-perm}, we summarize various models of random permutations and random planar maps which are connected with the skew Brownian permutons.

\begin{figure}[ht!]
	\begin{center}
		\includegraphics[width=1\textwidth]{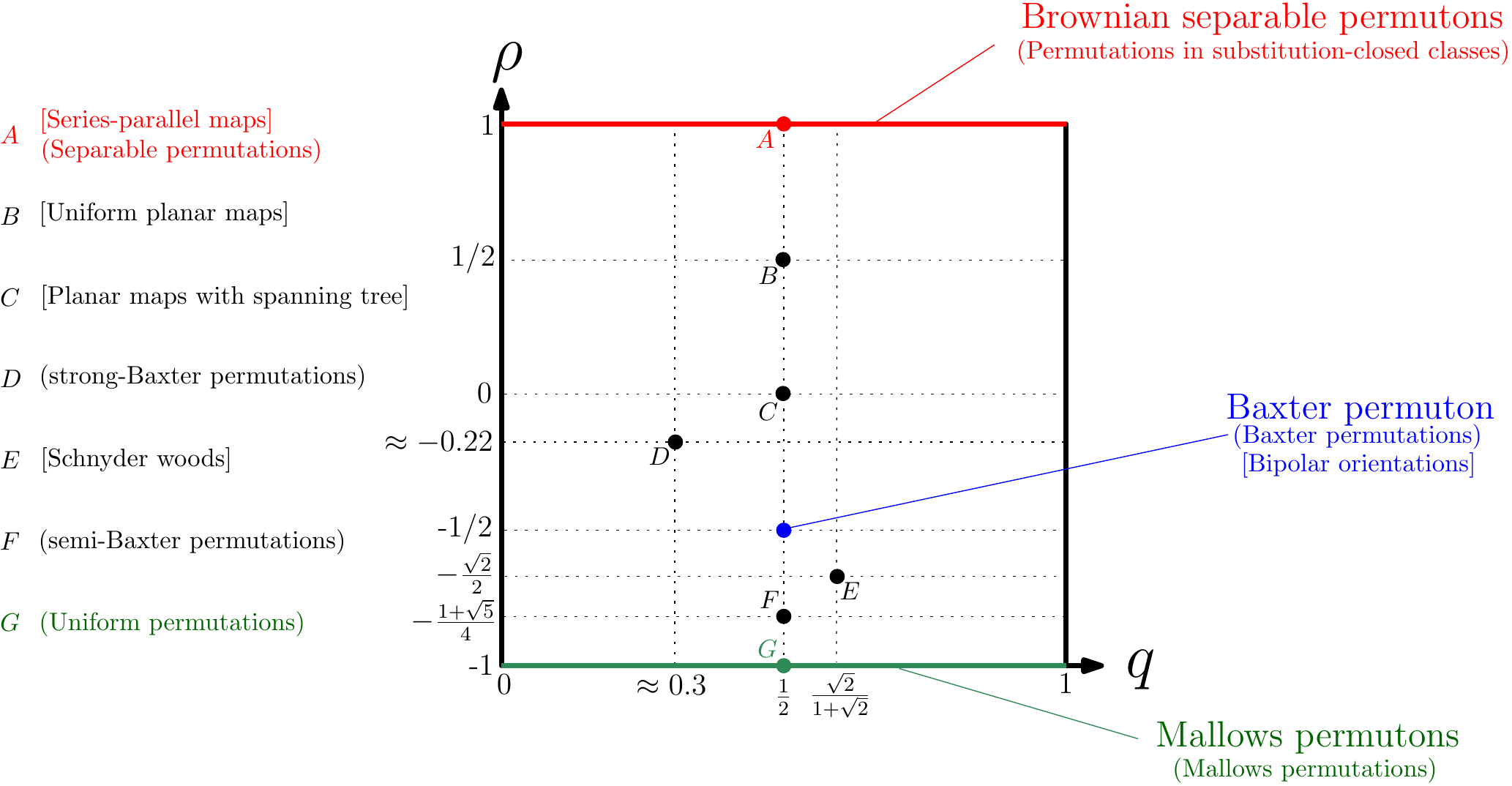}
		\caption{\label{fig-skew-perm} The diagram shows the range of values  $(\rho,q)\in [-1,1]\times(0,1)$ for the parameters of the skew Brownian permutons. At the bottom, in green, we have the Mallows permutons. At the top, in red, we have the Brownian separable permutons. In blue, close to the center, we have the Baxter permuton. Various models of random permutations which are known to converge in the permuton sense to the skew Brownian permutons are indicated between rounded parentheses. Finally, various models of planar maps which are connected to the skew Brownian permutons are indicated between squared parentheses.}
	\end{center}
	\vspace{-3ex}
\end{figure}

\subsection{Proof techniques and ordered subsets of the signed Brownian excursion}\label{sect:proof-tech} 

Recall that we will derive \cref{thm:upper_lower_graphons} from \cref{thm:upper_lower_perm}, so here we focus on the latter theorem. Our results in \cref{thm:upper_lower_perm} follow -- after some non-trivial arguments in Sections \ref{sec: lower bound seq permuton} and \ref{sect:up_bound_seq_perm} -- from some preliminary estimates on the probability of certain events (introduced in the next two sections) related to the signed Brownian excursion $(\efrak,\sfrak,p)$.

\subsubsection{Strategy for the proof of the lower bound}\label{sec: strategy lower bound}

We say that a subset $O\subset[0,1]$ is \textbf{ordered w.r.t.}\  $\vartriangleleft_{\efrak, \sfrak, p}$ if the usual order on $O$ coincides with $\vartriangleleft_{\efrak, \sfrak, p}$ on $O\cap \mcl R_\efrak$.

To obtain the lower bound in \cref{thm:upper_lower_perm}, we first define in \eqref{eq:selection} a selection rule $\sfS$ which determines a large subset $\calS\subset[0,1]$ ordered w.r.t.\  $\vartriangleleft_{\efrak, \sfrak, p}$. Here, \emph{large} refers to a good notion of size, which we could, \textit{e.g.}, take to be Hausdorff dimension.

To define the selection rule $\sfS$, we explore the signed excursion $(\efrak,\sfrak,p)$ by heights.
We call \textbf{branching height} a height which corresponds to a local minimum of $\efrak$: a branching height is said to be \textbf{positive} or \textbf{negative} according to the sign $\oplus$ or $\ominus$ of the corresponding symbol in $\sfrak$. We denote by $\mcl B_{\oplus}$ and $\mcl B_{\ominus}$ the set of positive and negative branching heights respectively, and we set $\mcl B=\mcl B_{\oplus}\cup\mcl B_{\ominus}$. Each branching height $b=\efrak(t_b)$ results in one sub-excursion of $\efrak$ over an interval $(a,c)\subset [0,1]$ splitting into two sub-excursions over the intervals $(a,t_b)$ and $(t_b,c)$ for some $t_b\in(a,c)$. See the red excursions and red intervals in \cref{fig:excursion-split}.

\begin{figure}[ht!]
	\begin{center}
		\includegraphics[width=.85\textwidth]{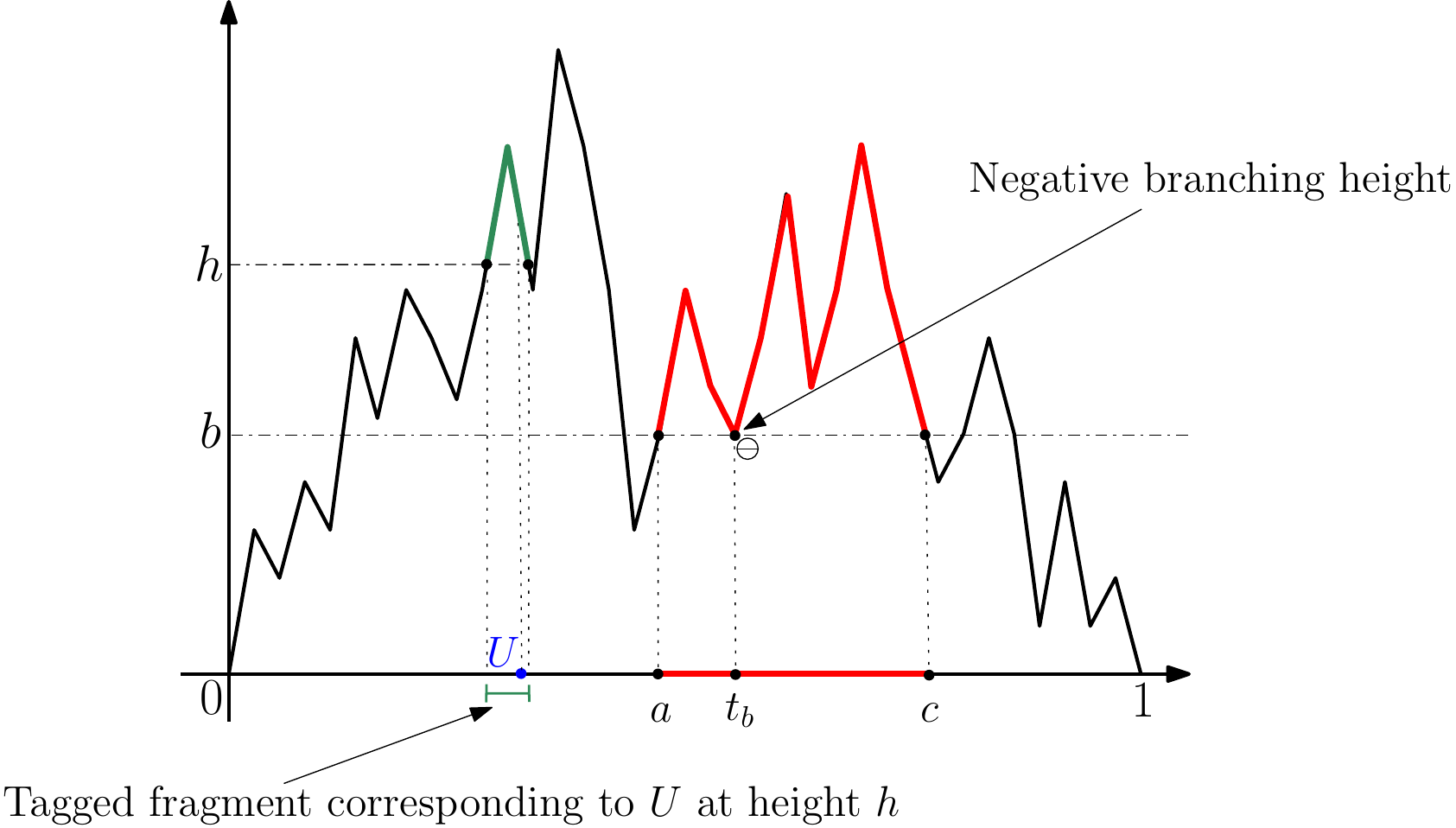}  
		\caption{\label{fig:excursion-split} A sketch for the notation introduced in \cref{sec: strategy lower bound}.}
	\end{center}
	\vspace{-3ex}
\end{figure}

Note that the \emph{optimal} selection rule to find the largest ordered subset of $[0,1]$ w.r.t.\  $\vartriangleleft_{\efrak, \sfrak, p}$ can be informally read as follows. Whenever we hit a positive branching height, we keep both intervals $(a,t_b)$ and $(t_b,c)$ and carry on the exploration in both components of $\efrak$. On the other hand, when reaching a negative branching height, one needs to discard one of the intervals $(a,t_b)$ and $(t_b,c)$ (because in this case, by definition \eqref{eq:exc_to_perm}, $y \vartriangleleft_{\efrak, \sfrak, p} x$ for almost all $x\in(a,t_b)$ and $y\in(t_b,c)$), keeping only the one with the largest ordered subset.

The main issue with the previous considerations is the heavily non-Markovian nature of the selection rule at negative branching heights: in order to pick one of $(a,t_b)$ or $(t_b,c)$ over the other, one \textit{a priori} needs to look into the whole future and see which one of them has the largest ordered subset. Our approach is to replace this selection rule by a Markovian rule. More precisely, we consider the following \textbf{selection rule} $\sfS$: 
\begin{align}\label{eq:selection}
	&\text{ ``Whenever reaching a negative branching height, we discard the smaller}\\ 
	&\quad\,\,\text{(in terms of Lebesgue measure) interval between $(a,t_b)$ and $(t_b,c)$.''}\notag
\end{align}

We then want to show that the selection rule $\sfS$ does not discard too many points, in a sense made precise in the next two paragraphs.

One essential tool in our proof is the analysis of a natural \emph{fragmentation process} \cite{bertoin-fragbook}, or more precisely of a \emph{self-similar interval fragmentation process}, embedded in the Brownian excursion $\efrak$ (see \eqref{eq: def Ffrak} below for a precise definition). A key player in the description of such processes is the so-called \textbf{tagged fragment}, which in our case, consists in looking at the duration of the excursion at height $h\geq 0$ straddling a uniform point $U$ in $(0,1)$. See the green excursion and green interval in \cref{fig:excursion-split}. \cite[Section 4]{bertoin-ssfrag} describes the law of the tagged fragment as a stochastic process with time parameter $h\geq 0$: it is an explicit \emph{positive self-similar Markov process with index $-\frac12$}, see \cref{prop: law tagged} for a precise statement. 

\begin{remark}
	The specific expressions for our bounds $\alpha_*(p)$ and $\beta^*(p)$ in \cref{rem:expr-bounds} are related to the expression of some Laplace exponents of the fragmentation process mentioned above, see for instance \eqref{eq:Phi^S} below.
\end{remark}

In order to show that the selection rule $\sfS$ does not discard too many points, we proceed as follows:
\begin{enumerate}[(a)]
	\item we first estimate the probability that the tagged fragment corresponding to $U$ survives \emph{long enough}, in the sense that it reaches some small value $\eps$ before it (possibly) gets discarded by $\sfS$. In \cref{prop: estimate killing}, we show that the latter probability is asymptotically $c\,\eps^{\lstar(p)}$, for some positive constant $c>0$ (recall from \cref{rem:expr-bounds} that $\lambda_*(p)=1-\alpha_*(p)$).
	\item Then in \cref{prop: two point function estimates}, we extend this estimate to a so-called \emph{two-point function estimate}: we show that the probability that two tagged fragments corresponding to two independent uniform points $U$ and $V$ in $(0,1)$ both survive until they get smaller than $\eps$ is asymptotically $c'\,\eps^{2\lstar(p)}$, for some (other) positive constant $c'>0$.
\end{enumerate}	
The above two estimates turn out to be enough to then deduce the lower bound in \cref{thm:upper_lower_perm}, as shown in \cref{sec: lower bound seq permuton}.

\begin{remark}\label{remark:haus-lower}
	One can also introduce the set of survival times
	\begin{equation*}\label{eq: def calS}
		\calS := \{t\in[0,1], \; t \; \text{is not in any sub-interval which is discarded by }\sfS \}.
	\end{equation*}
	Plainly, $\calS$ is a random set of fractal type and whose law depends on the parameter $p\in(0,1)$ of the signs. Moreover, by construction, we have the following immediate result: Almost surely, $\calS$ is totally ordered w.r.t.\ $\vartriangleleft_{\efrak, \sfrak, p}$.
	
	We believe that the first moment estimate in Item $(a)$ above, together with an upgraded version of the two-point function estimate in Item $(b)$ above and some energy method arguments (see for instance \cite[Theorem 4.27]{morters-peres}), would be enough to prove the following result: 
	
	\emph{
	For all $p\in(0,1)$, almost surely, the Hausdorff dimension of $\calS$ is 
	\[\dim (\calS)=\dim(\calS\cap \mcl R_\efrak)=\alpha_*(p),\]
	where $\alpha_*(p)$ is as in \cref{rem:expr-bounds}. 
	}

	We expect the upgraded version of the two-point function estimate mentioned above to require a fair amount of technical work, so we do not carry it out in this paper.
\end{remark}

\subsubsection{Strategy for the proof of the upper-bound}\label{sec: strategy upper bound}

Our strategy to prove the upper bound in \cref{thm:upper_lower_perm} is in some sense to analyze the \emph{worst-case scenario}.

Recall the notation from the previous section.  We pick (in a manner which is allowed to depend on the signed excursion $(\mathfrak e,\mathfrak s,p)$, plus possibly some additional independent information) an arbitrary subset $O\subset [0,1]^2$ which is ordered w.r.t.\  $\vartriangleleft_{\efrak, \sfrak, p}$. 
For each branching height $b\in \mcl B_{\ominus}$, let $\eta_b$ be the \emph{discarding rule} corresponding to the set $O$, that is, $\eta_b$ is equal to $L$ (for \emph{left}) if $O$ intersects $(t_b,c)$ and $\eta_b$ is equal to $R$ (for \emph{right}) if $O$ intersects $(a,t_b)$ (we make an arbitrary choice if $O$ intersects neither of the two intervals). Our goal is to show that $(\eta_b)_{b\in \mcl B_{\ominus}}$ discards a large subset of $[0,1]$ so that the complement is small. To do that, we upper-bound as $\eps\to 0$ the following probability
\[\bbP\left((\eta_b)_{b\in\mcl B_{\ominus}}\text{ does not discard the fragment corresponding to } U \text{ before it gets smaller than } \eps\right),\]
where $U$ is a uniform point in $[0,1]$ independent from everything else. In \cref{prop:est_up_bound_lis(q)}, we show that the above probability is upper-bounded by $c\cdot\eps^{\lambda^*(p)}$ as $\eps\to 0$ (recall from \cref{rem:expr-bounds} that $\lambda^*(p)=1-\beta^*(p)$). This is done in two main steps: 
\begin{itemize}
	\item Fix $r\in (1/2,1)$. We say that a branching height $b\in \mcl B_{\ominus}$ whose corresponding excursion interval $(a,c)$ contains $U$ is \emph{balanced} (at scale $r$) if $\max\{t_b-a,c-t_b\}\leq r (c-a)$, \textit{i.e.}\ the sub-excursions to the left and right of the local minimum time $t_b$ are of comparable time duration (recall \cref{fig:excursion-split}). The first step, which is carried out in \cref{prop:first step2}, is to upper-bound the probability that $(\eta_b)_{b\in\mcl B_{\ominus}}$ does not discard $U$ before reaching the $m$-th smallest balanced branching height. In particular, we will show that this probability is upper-bounded by $r^m$.
	\item The second step, which is carried out in \cref{prop:second step2}, is to upper-bound the probability that the fragment corresponding to $U$ at the $m$-th balanced branching height has duration smaller than $\eps$. 
\end{itemize}
We then optimize over the parameters $r$ and $m$. 

Finally, we transfer this estimate to the discrete setting in \cref{sect:up_bound_seq_perm} (obtaining the upper bound in \cref{thm:upper_lower_perm}), as follows.
First, we sample a permutation $\Perm(\bm{\mu}_p,n)$ from the Brownian separable permuton $\bm{\mu}_p$. Then, we consider the discarding rule $(\eta_b)_{b\in \mcl B_{\ominus}}$ corresponding to the points of the longest increasing subsequence in $\Perm(\bm{\mu}_p,n)$ (if there are multiple ones, we choose one arbitrarily). That is, at each branching height $b\in \mcl B_{\ominus}$ we discard the interval not containing points of the longest increasing subsequence (we make an arbitrary choice if the longest increasing subsequence intersects neither of the two intervals). We then conclude using the estimates above.

\begin{remark}
	As in Remark \ref{remark:haus-lower}, we expect that one can extend the arguments discussed above to obtain the following result:
		
	\emph{
	Fix $p\in(0,1)$. Almost surely, every set $O\subset [0,1]$ such that $O\cap \mcl R_\efrak$ is totally ordered w.r.t.\  $\vartriangleleft_{\efrak, \sfrak, p}$  satisfies
	\[\dim(O)\leq\beta^*(p),\]
	where $\beta^*(p)$ is as in \cref{rem:expr-bounds}.
	}
		
	However, there are some technical arguments involved in transferring from an estimate for a uniform time $U\in [0,1]$ to a bound for Hausdorff dimension, so for the sake of brevity we do not prove the above statement in this paper.
\end{remark}

\section{Preliminary results}

In this section we gather some preliminary results that are used later in the paper. 
In \cref{sect:prel_res} we discuss some classical properties of Brownian excursions.
In \cref{sect:sampling} we explain an equivalent way to sample permutations and graphs from the Brownian separable permutons and the Brownian cographs and how to derive \cref{thm:upper_lower_graphons} from \cref{thm:upper_lower_perm}.
Finally, in \cref{sect:frah}, we properly introduce fragmentation processes and the tagged fragment in a Brownian excursion.

\subsection{Brownian excursions}\label{sect:prel_res}

We collect some standard properties of a Brownian excursion $\efrak$ and the order $\vartriangleleft_{\efrak, \sfrak, p}$ introduced in \eqref{eq:exc_to_perm} that will be used frequently (and sometimes tacitly) in the paper.

Conditioning on $\efrak$, we say that $t\in[0,1]$ is \textbf{regular} for $\efrak$ if it is \emph{not} a one-sided local minimum of $\efrak$, that is, $t\in[0,1]$ is regular for $\efrak$ if
\[
\forall \eps>0, \, \exists\, t_1\in(t-\eps,t), t_2\in(t,t+\eps) \text{ such that } \efrak(t_1)<\efrak(t) \; \text{and} \; \efrak(t_2)<\efrak(t).
\]
We denote by $\mcl R_\efrak$ the (random) set of regular points of $\efrak$.

\begin{lem}\label{lem:prop_support_perm}
	Fix $p\in(0,1)$. Let $\efrak$ be a (normalized) Brownian excursion and $\vartriangleleft_{\efrak, \sfrak, p}$ be the relation introduced in \eqref{eq:exc_to_perm}. Then, almost surely, the following assertions hold.
	\begin{enumerate}[(a)]
		\item All local minima of $\efrak$ are strict local minima (hence countable), they all have different heights, and they are dense in $[0,1]$.
		\item Local minima never split the excursion into two sub-excursions with equal durations.
		\item For every $x,y\in  \mcl R_\efrak$ with $x<y$, it holds that $\min_{[x,y]}\efrak$ is reached at a unique point which is a strict local minimum.
		\item The relation $\vartriangleleft_{\efrak, \sfrak, p}$ restricted to $\mcl R_\efrak$ is a total order.
		\item $\dim([0,1]\setminus \mcl R_\efrak)=1/2$, where $\dim(\cdot)$ denotes the Hausdorff dimension of a set.
	\end{enumerate}
\end{lem}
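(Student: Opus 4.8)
The plan is to establish (a)--(e) in order, since each later item builds on the earlier ones. For (a), I would invoke the classical description of the local structure of a Brownian excursion. The fact that all local minima are strict and countable follows from the strong Markov property together with the fact that Brownian motion a.s.\ has no interval of constancy; more concretely, one can argue rationally: for each pair of rationals $q_1 < q_2$ in $[0,1]$, the minimum of $\efrak$ on $[q_1,q_2]$ is a.s.\ attained at a unique point (a one-dimensional consequence of absolute continuity of $\efrak$ on $[q_1,q_2]$ w.r.t.\ Brownian motion and the fact that a Brownian motion attains its minimum on a compact interval uniquely), and a countable union over such pairs shows every local minimum is strict. For the distinctness of heights, again fix rationals and use that the joint law of $\efrak$ restricted to two disjoint rational intervals is absolutely continuous w.r.t.\ a pair of independent Brownian motions, so the minima over the two intervals a.s.\ differ; a countable union over all pairs of disjoint rational intervals handles every pair of local minima (which necessarily lie in some such disjoint pair). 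Density of local minima in $[0,1]$ follows from the same rational-interval argument: in every rational subinterval $\efrak$ has a strict local minimum in the interior (this requires a small argument to rule out the minimum being at an endpoint, which again follows from the strong Markov property / the fact that $\efrak$ is not monotone on any interval).

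For (b), I would condition on the time $t_b$ of a given local minimum and on the height $\efrak(t_b)$, and use the description of the excursion straddling $t_b$: given the enclosing sub-excursion interval $(a,c)$ and the split point $t_b$, the two pieces are, after rescaling, conditionally independent Brownian excursions of durations $t_b - a$ and $c - t_b$. The event that $t_b - a = c - t_b$ is then measurable w.r.t.\ this conditioning and has probability zero because the location of the split, governed by the excursion decomposition, has a diffuse law; alternatively, one can note that the map recording $(t_b - a)/(c-a)$ over all local minima takes values in a countable set indexed by local minima, and for each fixed one the value $1/2$ is hit with probability zero, then take a countable union. A cleaner route: the durations of the fragments at a local minimum are encoded by the fragmentation process of Bertoin referenced in the paper, and the dislocation measure there is non-atomic at the symmetric split, so the symmetric split never occurs; but since the fragmentation material is introduced only later, I would give the direct excursion-theoretic argument.

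Items (c) and (d) are essentially bookkeeping once (a)--(b) are in place. For (c): given $x < y$ both regular, let $m = \min_{[x,y]}\efrak$. If the minimum were attained at two points, or at a point which is a one-sided local minimum, one can produce a contradiction with regularity of $x$ or $y$ or with part (a) (a non-strict or boundary minimum would force $x$ or $y$ to be a one-sided local minimum, contradicting $x,y \in \mcl R_\efrak$; a doubly-attained minimum contradicts part (a) applied to suitable rational intervals). So the minimum is attained at a unique strict local minimum $\ell_{x,y} \in (x,y)$, which makes \eqref{eq:exc_to_perm} well-defined on $\mcl R_\efrak$. For (d): reflexivity is vacuous, antisymmetry and totality are immediate from (c) and \eqref{eq:exc_to_perm} (for any $x \ne y$ in $\mcl R_\efrak$ exactly one of $x \vartriangleleft y$, $y \vartriangleleft x$ holds, determined by the sign at $\ell_{x,y}$); transitivity is the one point needing a short argument: given $x < y < z$ in $\mcl R_\efrak$, compare $\ell_{x,y}, \ell_{y,z}, \ell_{x,z}$ using that $\ell_{x,z}$ is the global minimizer on $[x,z]$, hence equals whichever of $\ell_{x,y}, \ell_{y,z}$ is lower (by part (a) they have distinct heights), and then check the three sign cases. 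Finally, (e) is a known computation: $[0,1]\setminus\mcl R_\efrak$ is the set of one-sided local minima (equivalently, times where $\efrak$ attains a "record from one side"), and its Hausdorff dimension is $1/2$; I would cite the standard fact that the set of local minima / the closure of the set of left-record times of Brownian motion has dimension $1/2$ (the same dimension as the zero set), and reduce the excursion statement to the Brownian-motion statement via absolute continuity on compact subintervals bounded away from $0$ and $1$, plus the observation that $\{0,1\}$ and any countable exceptional set contribute nothing to the dimension.

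The main obstacle is item (e): the other parts are soft consequences of the strong Markov property and absolute continuity, whereas (e) requires an actual dimension computation. I would handle it by quoting the classical result on the dimension of the set of one-sided local extrema of Brownian motion and transferring it to the normalized excursion; the transfer itself is routine (absolute continuity of the excursion w.r.t.\ Brownian motion on $[\delta, 1-\delta]$, then let $\delta \to 0$), but identifying the right reference and checking that the exceptional set $[0,1]\setminus\mcl R_\efrak$ is exactly the set of one-sided local minima (up to the endpoints) needs a little care.
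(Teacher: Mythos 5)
Your proposal is correct and follows essentially the same route as the paper: the paper simply cites classical references for items (a)--(b) and Maazoun's total-order lemma for (d) where you sketch the standard underlying arguments, and its proof of (e) is precisely your argument, decomposing $[0,1]\setminus\mcl R_\efrak$ into a countable union over rational starting times $q$ of forward/backward record-time sets of $\efrak$, each of dimension $1/2$ by local absolute continuity with Brownian motion, and concluding by countable stability of Hausdorff dimension. The only minor imprecision is in (b), where one should index local minima by pairs of rationals (the minimizer of $\efrak$ on $[q_1,q_2]$ together with its enclosing excursion interval) rather than ``fixing'' an element of the random countable set of local minima, but you essentially acknowledge this and the resulting countable-union argument is standard.
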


\begin{proof}
	The first two items are classical properties of Brownian excursions (see for instance \cite[Chapter XII]{revuz2013continuous}). Then Item (c) follows from the previous properties and the definition of regular points. 
	
	Item (d) is \cite[Lemma 2.5]{maazoun-separable-permuton}. Note that the latter lemma states our claim only for a set of Lebesgue measure one (instead of $\mcl R_\efrak$), but the proof of the lemma proves exactly our claim.
	
	Finally, we prove Item (e). This result is classical in the probabilistic literature, and we include a proof only for the sake of completeness. Let $t$ be a time which is not regular for $\efrak$. Then either there is a rational $q < t$ such that $\efrak$ attains a running minimum at time $t$ when run forward started from time $q$; or there exists a rational $q > t$ such that $\efrak$ attains a running minimum at time $t$ when run backward started from time $q$. For each rational time $q$, the set of times at which $\efrak$ attains a running minimum when run forward (resp.\ backward) from time $q$ has a.s.\ Hausdorff dimension 1/2 (by local absolute continuity between the Brownian excursion and Brownian motion). Therefore the result follows from the countable stability of Hausdorff dimension. 
\end{proof}

\subsection{Sampling permutations and graphs from the Brownian separable permutons and the Brownian cographs}\label{sect:sampling}

Recall from \cref{sect:main_res_perm} that given the Brownian separable permuton $\bm{\mu}_p$, the permutation $\Perm(\bm{\mu}_p,n)$ is obtained as follows: conditioning on $\bm{\mu}_p$,  sample $n$ independent points $Z_1, \dots , Z_n$ in the unit
square $[0, 1]^2$ with distribution $\bm{\mu}_p$. These $n$ points induce a random permutation $\sigma_n=\Perm(\bm{\mu}_p,n)$: for any $i,j\in[n]:=\{1,\dots,n\}$, let $\sigma_n(i) = j$ if the point with $i$-th lowest $x$-coordinate has $j$-th lowest $y$-coordinate. It is simple to realize that the previous permutation can be equivalently obtained as follows: sample $n$ independent uniform points $(U_i)_{i\leq n}$ on $[0, 1]$. Then $\sigma_n$ is the permutation induced by the order of the points $(U_i)_{i\leq n}$ with respect to the order $\vartriangleleft_{\efrak, \sfrak, p}$ introduced in \eqref{eq:exc_to_perm}. Equivalently, for $i<j$,
\begin{eqnarray*}
	\sigma_n(i)<\sigma_n(j) \quad \text{if and only if} \quad \sfrak(\ell_{\bar{U}_i,\bar{U}_j})=\oplus,
\end{eqnarray*}
where $(\bar{U}_i)_{i\leq n}$ is the re-arrangement of $(U_i)_{i\leq n}$ in increasing order and $\ell_{\bar{U}_i,\bar{U}_j}\in[\bar{U}_i,\bar{U}_j]$ is the unique strict local minimum $\min_{[\bar{U}_i,\bar{U}_j]}\efrak$. Recall from \cref{lem:prop_support_perm} that almost surely, for every $x,y\in  \mcl R_\efrak$ with $x<y$, the minimum $\min_{[x,y]}\efrak$ is reached at a unique point which is a strict local minimum and $\mcl R_\efrak$ has Lebesgue measure one, so the previous quantities are almost surely well-defined. Therefore, when convenient, we will denote $\sigma_n=\Perm(\bm{\mu}_p,n)$ also by $\sigma_n=\Perm(\efrak,\sfrak,(U_i)_{i\leq n})$.

Recall now that given the Brownian cographon $\bm{W}_p$ introduced in \eqref{eq:Brownian-cog}, we can consider the random graph induced by $\bm{W}_p$ of size $n$,
denoted by $\Graph(\bm{W}_p,n)$ and defined as follows: consider $n$ vertices
$\{v_1, v_2, \dots, v_n\}$, and let $(U_1, \ldots , U_n)$ be $n$ i.i.d.\ uniform random variables in $[0, 1]$, independent of $\bm{W}_p$.
We connect the vertices $v_i$ and $v_j$ with an edge if and only if $\bm{W}_p(U_i,U_j)=1$. Equivalently, from the definition in \eqref{eq:Brownian-cog}, we connect the vertices $v_i$ and $v_j$ with an edge if and only if $\sfrak(\ell_{U_i,U_j})=\oplus$. 

By comparing the above descriptions of $\Perm(\bm{\mu}_p,n)$ and $\Graph(\bm{W}_p,n)$ and noting that $\bm{\mu}_{1-p}$ has the same law as $\bm{\mu}_{ p}$ when we exchange all the $\oplus$ and $\ominus$ signs in the collection of signs $\sfrak$, we immediately obtain the following result.

\begin{lem} \label{lem:perm_to_graph}
	Fix $p\in(0,1)$. Let $\bm{\mu}_p$ be the Brownian separable permuton of parameter $p$ and let $\bm{W}_p$ be the Brownian cographon of parameter $p$. Then
	\begin{equation*}
		\LCL(\Graph(\bm{W}_p,n))\stackrel{d}{=}\LIS(\Perm(\bm{\mu}_p,n)) \quad \text{ and } \quad \LIN(\Graph(\bm{W}_p,n))\stackrel{d}{=}\LIS(\Perm(\bm{\mu}_{1-p},n)).
	\end{equation*}
\end{lem}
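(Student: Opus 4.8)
The plan is to read off the identity directly from the two concrete sampling recipes given in this subsection for $\Perm(\bm\mu_p,n)$ and $\Graph(\bm W_p,n)$, together with the symmetry $\bm\mu_{1-p}\eqD\bm\mu_p$ under swapping all signs. First I would fix $n$ independent uniform points $(U_i)_{i\le n}$ on $[0,1]$, independent of the signed excursion $(\efrak,\sfrak,p)$, and let $(\bar U_i)_{i\le n}$ be their increasing rearrangement, so that $\bar U_1<\dots<\bar U_n$ a.s. For $i<j$ write $\ell_{i,j}:=\ell_{\bar U_i,\bar U_j}$ for the a.s.\ unique strict local minimum of $\efrak$ on $[\bar U_i,\bar U_j]$ (well-defined a.s.\ by \cref{lem:prop_support_perm}, since the $\bar U_i$ lie in $\mcl R_\efrak$ with probability one).

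Next I would record, from the displayed recipe in \cref{sect:sampling}, that the permutation $\sigma_n=\Perm(\efrak,\sfrak,(U_i)_{i\le n})$ satisfies $\sigma_n(i)<\sigma_n(j)\iff\sfrak(\ell_{i,j})=\oplus$ for $i<j$; and that the graph $G_n=\Graph(\bm W_p,n)$, built from the \emph{same} data by joining the vertices with the $i$-th and $j$-th smallest uniforms iff $\sfrak(\ell_{i,j})=\oplus$, has the property that two vertices are adjacent iff the corresponding pair of positions forms an ascent of $\sigma_n$. Consequently, a subset $I\subset[n]$ (viewed as a set of positions for $\sigma_n$, equivalently as a vertex subset of $G_n$ via the increasing labelling) is an increasing subsequence of $\sigma_n$ if and only if it is a clique of $G_n$, and it is a decreasing subsequence of $\sigma_n$ if and only if it is an independent set of $G_n$. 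Taking maxima over $I$ gives $\LCL(\Graph(\bm W_p,n))=\LIS(\Perm(\bm\mu_p,n))$ and $\LIN(\Graph(\bm W_p,n))=\LDS(\Perm(\bm\mu_p,n))$ \emph{on this coupling}, hence in distribution.

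For the second identity I would then invoke the sign-flip symmetry: exchanging every $\oplus$ with $\ominus$ in $\sfrak$ turns a $p$-coin sequence into a $(1-p)$-coin sequence, and by \eqref{eq:exc_to_perm} it reverses the order $\vartriangleleft_{\efrak,\sfrak,p}$ on $\mcl R_\efrak$; hence it sends $\bm\mu_p$ to $\bm\mu_{1-p}$ in law and turns decreasing subsequences into increasing ones. Thus $\LDS(\Perm(\bm\mu_p,n))\eqD\LIS(\Perm(\bm\mu_{1-p},n))$, and combining with the previous paragraph yields $\LIN(\Graph(\bm W_p,n))\eqD\LIS(\Perm(\bm\mu_{1-p},n))$, completing the proof.

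There is essentially no analytic difficulty here; the only point requiring a little care — and the one I would treat as the "main obstacle" — is the bookkeeping that the uniform points, their rearrangement, and the vertex labels of $G_n$ are matched up consistently, so that "increasing subsequence of $\sigma_n$'' and "clique of $G_n$'' literally refer to the same subsets of $[n]$ under the coupling; once the two sampling descriptions from \cref{sect:sampling} are written with the same $(U_i)$ and the same $\ell_{i,j}$, this is immediate. (Note that the paper suppresses the elementary observation that $\LDS(\Perm(\bm\mu_p,n))\eqD\LIS(\Perm(\bm\mu_{1-p},n))$ and instead phrases \cref{lem:perm_to_graph} purely in terms of $\LIS$; one could equally well absorb the sign flip into the graph side from the start, which is the route I have sketched.)
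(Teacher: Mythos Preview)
Your proposal is correct and follows essentially the same approach as the paper: the paper's proof is literally the single sentence ``By comparing the above descriptions of $\Perm(\bm\mu_p,n)$ and $\Graph(\bm W_p,n)$ and noting that $\bm\mu_{1-p}$ has the same law as $\bm\mu_p$ when we exchange all the $\oplus$ and $\ominus$ signs in the collection of signs $\sfrak$, we immediately obtain the following result.'' Your write-up simply unpacks this by making the coupling explicit (matching vertices to the rearranged uniforms so that cliques and increasing subsequences coincide as subsets of $[n]$) and spelling out the $\LDS$ intermediate step, but the underlying argument is identical.
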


As a consequence of \cref{lem:perm_to_graph}, we see that \cref{thm:upper_lower_graphons} follows immediately from \cref{thm:upper_lower_perm}. We also recall that \cref{cor:upper_lower_graphons} is an immediate consequence of \cref{thm:upper_lower_graphons}.
Hence, in the rest of the paper we will focus on the proof of \cref{thm:upper_lower_perm}.

\subsection{Fragmentation processes and the tagged fragment in a Brownian excursion}\label{sect:frah}
As already mentioned in \cref{sect:proof-tech}, our point of view bears close connections with Bertoin's \textbf{fragmentation processes} \cite{bertoin-fragbook}, which in fact come in as one of the main tools for the derivation of the exponent bounds $\alpha_*(p)$ an $\beta^*(p)$. Such processes describe the behavior of a system of masses which fall apart randomly over time in a Markovian way. In this paper we will be interested in one particular example of so-called \textbf{self-similar interval fragmentation} $\Ffrak_0=(\Ffrak_0(h),h\ge 0)$ defined from a normalized Brownian excursion $\efrak$ as
\begin{equation} \label{eq: def Ffrak}
	\Ffrak_0(h):=\{s\in(0,1), \; \efrak(s)>h\}, \quad h\ge 0.
\end{equation}
See Figure \ref{fig-fragmentation} for an illustration. It is clear that $\Ffrak_0$ is a nested family of open sets in $(0,1)$. Moreover, Brownian scaling implies that $\Ffrak_0$ also enjoys a self-similarity property: for all $r>0$, the process $(r\Ffrak_0(r^{-1/2}h), h\ge 0)$ has the law of the process \eqref{eq: def Ffrak} defined from a Brownian excursion conditioned to have duration $r$. We will denote by $\Ffrak$ the collection of \emph{lengths} of intervals in $\Ffrak_0$. The process $\Ffrak$ has been first introduced in \cite{bertoin-ssfrag}, where it was proved to a be essentially a variant of the Aldous--Pitman fragmentation \cite{aldous-pitman}.

\begin{figure}[ht!]
	\begin{center}
		\includegraphics[width=.5\textwidth]{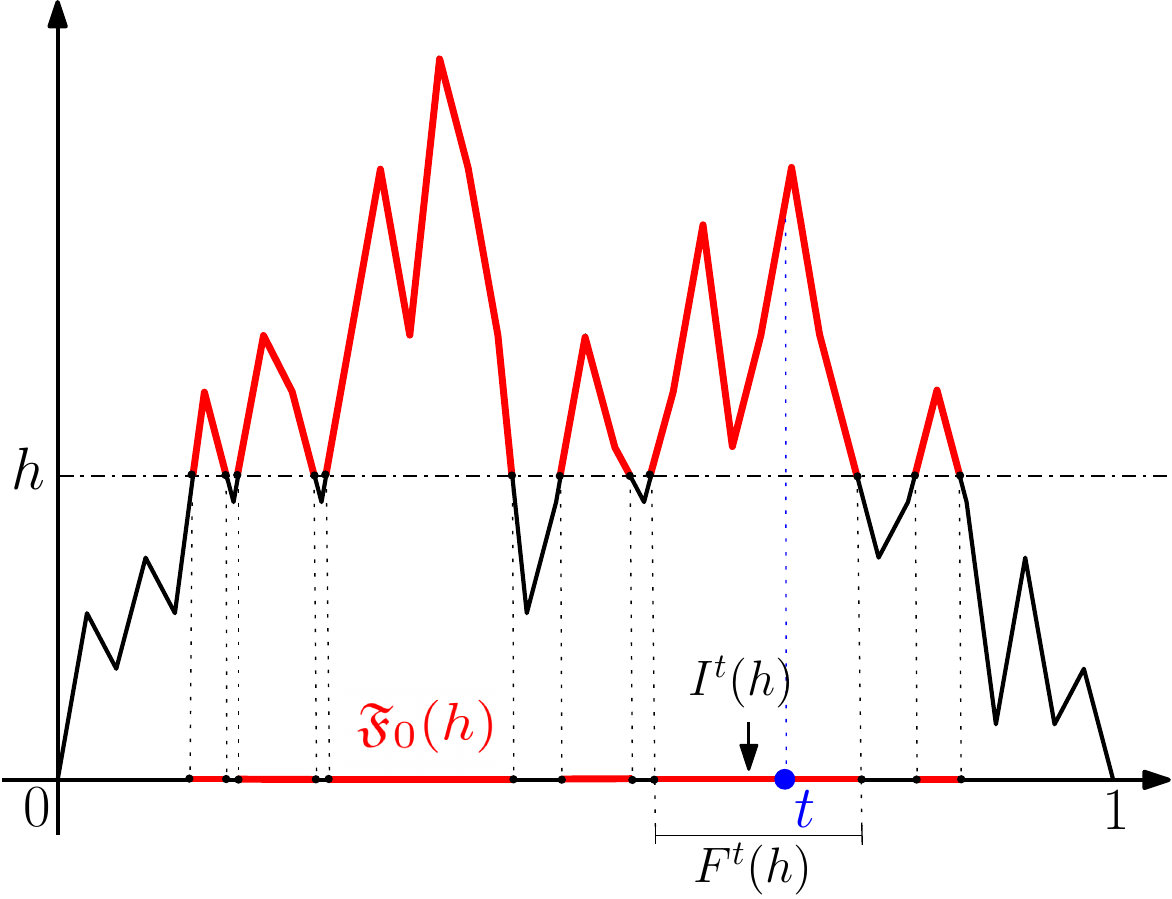}  
		\caption{\label{fig-fragmentation} A sketch for the notation introduced for the self-similar interval fragmentation $\Ffrak_0=(\Ffrak_0(h),h\ge 0)$. In red we highlighted the intervals in $\Ffrak_0(h)$ and the corresponding sub-excursions of $\efrak$ above level $h$. The interval $I^{t}(h)$ in $\Ffrak_0(h)$ containing $t$ has length $F^{t}(h)$ in $\Ffrak$.}
	\end{center}
	\vspace{-3ex}
\end{figure}

A key player in the description of such processes is the so-called \textbf{tagged fragment}, which in our case consists in targeting the fragment straddling a uniform point in $(0,1)$. For $t\in(0,1)$, we denote by $I^{t}(h)$, $h\ge 0$, the interval in $\Ffrak_0(h)$ which contains $t$, and we set $I^{t}(h)=\emptyset$ if $t$ is not contained in any interval of $\Ffrak_0(h)$ (equivalently, $\efrak(t) \leq h$). We further write $F^{t}(h)$ for the length of the interval $I^{t}(h)$, and $h^{t} = \efrak(t)$ for its absorption time at $0$, \textit{i.e.}\ the first height $h$ such that $I^{t}(h)=\emptyset$. The tagged cell for the fragmentation, or simply tagged fragment, is then given by 
\begin{equation} \label{eq: def tagged cell}
F^{U}=(F^{U}(h), 0\le h<h^{U}), \quad \text{where $U$ is uniform in $(0,1)$, independent from $\efrak$.}
\end{equation}
 Then \cite[Section 4]{bertoin-ssfrag} gives the law of $F^U$. Let us recall for completeness that if $(X,P_x)$ is a positive Markov process which under $P_x$ starts at $x>0$, we say that $X$ is \textbf{self-similar with index $\alpha$} if, for all $r>0$, the process $(rX(r^{\alpha}s), s\ge 0)$ under $P_x$ has the law $P_{rx}$.

\begin{prop}[{\cite[Section 4]{bertoin-ssfrag}}]\label{prop: law tagged}
	The process $F^{U}$ is a positive self-similar Markov process with index $-\frac12$. It can be further written in the Lamperti representation as 
	\[
	F^{U}(h) = \exp(-\xi_{\rho(h)}), \quad 0\le h<h^{U},
	\]
	where $\rho(h)$ is the (Lamperti) time-change
	\begin{equation}\label{eq:def_time_change}
	\rho(h) := \inf\left\{u>0, \; \int_0^{u} \mathrm{e}^{-\frac12\xi_r} \mathrm{d}r > h\right\}, \quad 0\le h<h^{U},
\end{equation}
	and $\xi$ is a subordinator with Laplace exponent 
	\begin{equation}\label{eq:def_phi}
		\Phi(q) := -\log \bbE[\mathrm{e}^{-q\xi_1}] = \int_0^\infty (1-\mathrm{e}^{-qx}) \frac{2\mathrm{e}^x}{\sqrt{2\pi(\mathrm{e}^x-1)^3}}\, \mathrm{d}x, \quad q>-\frac12.
	\end{equation}
	That is, $\xi$ has no killing, no drift, and Lévy measure $\Lambda(\mathrm{d}x) := \frac{2\mathrm{e}^x \mathrm{d}x}{\sqrt{2\pi(\mathrm{e}^x-1)^3}}$ on $(0,+\infty)$.
\end{prop}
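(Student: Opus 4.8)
\emph{Proof strategy.} The plan is to establish, in order: (i) that $F^{U}=(F^{U}(h))_{0\le h<h^{U}}$ is a Markov process which is self-similar of index $-\tfrac12$; (ii) that, being a non-increasing positive self-similar Markov process absorbed continuously at $0$ in a.s.\ finite time, it admits a Lamperti representation $F^{U}(h)=\exp(-\xi_{\rho(h)})$ for a subordinator $\xi$ with no killing and the time change $\rho$ of \eqref{eq:def_time_change}; and (iii) that the Laplace exponent of $\xi$ is the function $\Phi$ of \eqref{eq:def_phi}. Step (iii) will be the crux; steps (i) and (ii) are soft. This is \cite[Section~4]{bertoin-ssfrag}, and I would organize the argument as follows.

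For (i), the structural input is the classical description of a Brownian excursion above a level. Fix $h\ge 0$ and let $I^{U}(h)=(a,c)$ be the interval of $\Ffrak_{0}(h)$ containing $U$. Conditionally on $\efrak$ restricted to $[0,1]\setminus(a,c)$ and on the durations of all sub-excursions of $\efrak$ above level $h$, the rescaled path $(\efrak(a+s(c-a))-h)_{s\in[0,1]}$ is a Brownian excursion of duration $c-a=F^{U}(h)$, independent of the rest; and conditionally on $\efrak$ and on $\{I^{U}(h)=(a,c)\}$ the point $U$ is uniform on $(a,c)$. Since $(F^{U}(h+h'))_{h'\ge 0}$ is a measurable functional of the pair $\big((\efrak(t)-h)_{t\in I^{U}(h)},\,U\big)$, the conditional law of $(F^{U}(h+h'))_{h'\ge 0}$ given $F^{U}(h)=\ell$ depends only on $\ell$, which is the Markov property. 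Brownian scaling (an excursion of duration $\ell$ is $\sqrt{\ell}$ times a normalized excursion read at time scale $\ell$) upgrades this to self-similarity of index $-\tfrac12$: the process started from $\ell$ has the law of $h\mapsto \ell\,F^{U}(\ell^{-1/2}h)$. Alternatively, one invokes that $\Ffrak$ is a self-similar interval fragmentation of index $-\tfrac12$ (as recalled around \eqref{eq: def Ffrak}, from \cite{bertoin-ssfrag}) and that the tagged fragment of any self-similar fragmentation is a positive self-similar Markov process by the general theory in \cite{bertoin-fragbook}.

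For (ii), I would note that $F^{U}$ is non-increasing and right-continuous, decreasing only by jumps (a jump occurs exactly when the level $h$ passes the height of a local minimum of $\efrak$ interior to $I^{U}(h)$ that disconnects $U$ from part of its current interval), and that $F^{U}(h)\to 0$ as $h\uparrow h^{U}=\efrak(U)$ because $U$ is a.s.\ a regular point of $\efrak$ (\cref{lem:prop_support_perm}), so $I^{U}(h)$ shrinks to $\{U\}$; in particular $F^{U}$ reaches $0$ continuously, not by a jump, in the finite time $h^{U}$. By Lamperti's classical representation theorem for positive self-similar Markov processes of index $-\tfrac12$, such a process can be written as $F^{U}(h)=\exp(-\xi_{\rho(h)})$ with $\rho$ as in \eqref{eq:def_time_change} and $\xi$ a possibly killed Lévy process; since $F^{U}$ is non-increasing, $\xi$ is a subordinator; since $F^{U}$ hits $0$ continuously rather than by a jump, $\xi$ has no killing and $\xi_{t}\to+\infty$; and the a.s.\ finiteness of $h^{U}$ is the statement $\int_{0}^{\infty}\mathrm{e}^{-\xi_{r}/2}\,\mathrm{d}r=h^{U}<\infty$ a.s.

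It remains to identify the Lévy measure $\Lambda$ of $\xi$, which I expect to be the main obstacle. A jump of $\xi$ of size $x$ corresponds to $F^{U}$ being multiplied by $\mathrm{e}^{-x}$, i.e.\ to a fraction $1-\mathrm{e}^{-x}$ of the tagged interval being severed at a local minimum of $\efrak$; by self-similarity it suffices to compute the intensity of such events when $F^{U}=1$. I would follow one of two equivalent routes. (a) \emph{Via the dislocation measure:} compute the binary dislocation measure $\nu$ of $\Ffrak$ (a duration-$\ell$ sub-excursion splits at rate proportional to $\ell^{-1/2}$ into durations $\ell s,\ell(1-s)$ with $s$ governed by $\nu$); excursion theory — the Itô/Bismut description of the excursion near a low point, equivalently the identification of $\Ffrak$ with the Aldous--Pitman fragmentation \cite{aldous-pitman} — yields $\nu(\mathrm{d}s)=\sqrt{2/\pi}\,(s(1-s))^{-3/2}\,\mathrm{d}s$ in the ordered normalization $s\ge\tfrac12$; then the general formula for the tagged fragment of a conservative self-similar fragmentation gives $\Phi(q)=\int(1-s^{q+1}-(1-s)^{q+1})\,\nu(\mathrm{d}s)$, and the substitution $s=\mathrm{e}^{-x}$ turns this into $\int_{0}^{\infty}(1-\mathrm{e}^{-qx})\,\tfrac{2\mathrm{e}^{x}}{\sqrt{2\pi(\mathrm{e}^{x}-1)^{3}}}\,\mathrm{d}x$, with the sanity check $\Phi(1)=\sqrt{2/\pi}\,B(\tfrac12,\tfrac12)=\sqrt{2\pi}=2\sqrt2\,\Gamma(\tfrac32)/\Gamma(1)$, consistent with the closed form $\Phi(q)=2\sqrt2\,\Gamma(q+\tfrac12)/\Gamma(q)$ of \cref{rem:expr-bounds}. (b) \emph{Directly:} use Bismut's decomposition of the Itô excursion measure at $U$ — conditionally on $\efrak(U)$, the path fragments to the left and right of $U$ are, up to conditioning on the total duration, independent Brownian motions run from height $\efrak(U)$ down to $0$ — so that $U-g(h)$ and $d(h)-U$ (with $(g(h),d(h))=I^{U}(h)$) are the first-passage times of these fragments below level $h$, whose increments for free Brownian motion form stable-$\tfrac12$ subordinators, the duration conditioning producing via self-similarity precisely the exponential tilt $\mathrm{e}^{x}(\mathrm{e}^{x}-1)^{-3/2}$ of the stable-$\tfrac12$ density $x^{-3/2}$. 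Either way the bookkeeping is delicate, and the full computation is done in \cite[Section~4]{bertoin-ssfrag}; the hardest point is pinning down the exact Lévy measure, in particular the behaviour of the large jumps (those of size $>\log 2$, i.e.\ severing more than half of the tagged interval, which is precisely what produces the factor $\mathds{1}_{x\in(0,\log 2]}+p\,\mathds{1}_{x\in(\log 2,+\infty)}$ in $\Phi^{\sfS}$ of \cref{rem:expr-bounds}).
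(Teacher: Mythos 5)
This proposition is not proved in the paper at all — it is imported verbatim from \cite[Section 4]{bertoin-ssfrag} — and your sketch is a correct and faithful outline of exactly the argument given there: Markov property and index $-\tfrac12$ self-similarity from the excursion decomposition above a level plus Brownian scaling, the Lamperti representation with no killing because $F^U$ is non-increasing and absorbed continuously at $0$, and identification of $\Lambda$ as the size-biased image under $s\mapsto-\log s$ of the binary dislocation measure $\sqrt{2/\pi}\,(s(1-s))^{-3/2}\,\mathrm{d}s$, whose two branches ($s$ and $1-s$) glue at $x=\log 2$ to give the density $2\mathrm{e}^x/\sqrt{2\pi(\mathrm{e}^x-1)^3}$, consistent with your check $\Phi(1)=\sqrt{2\pi}$. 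The only cosmetic point is your closing parenthetical: the factor $\mathds{1}_{x\in(0,\log 2]}+p\,\mathds{1}_{x\in(\log 2,+\infty)}$ in $\Phi^{\sfS}$ comes from the thinning induced by the selection rule $\sfS$, not from the identification of $\Lambda$ itself, which is smooth across $\log 2$.
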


\begin{remark}\label{rem:lap_exp_expl}
	One can calculate that $\Phi(q) = 2\sqrt{2} \frac{\Gamma(q+1/2)}{\Gamma(q)}$ (see \cite[Equation (12)]{bertoin-ssfrag}). Note also that $\Phi(q)$ is increasing in $q$, $\Phi(0)=0$, $\lim_{q\to (-1/2)^+}\Phi(q)=-\infty$ and $\lim_{q\to +\infty}\Phi(q)=+\infty$.
	The fact that $\Phi(q) = 2\sqrt{2} \frac{\Gamma(q+1/2)}{\Gamma(q)}$, together with \cite[Section 2.3]{krs-subordinators} for instance, implies that $F^{U}$ has the law of the $\frac12$--stable (decreasing) subordinator conditioned to be absorbed continuously at $0$.
\end{remark}

The reason why $F^{U}$ plays a special part in the description of $\Ffrak$ is that it governs the behavior of the size of a \emph{typical} fragment in $\Ffrak$.

We end this section with a technical lemma for subordinators which will be relevant in later sections.

\begin{lem} \label{lem: asymptotics subordinator}
	Let $\eta$ be a subordinator with Laplace exponent $\Psi(q) := -\log \bbE[\mathrm{e}^{-q\eta_1}]$, and $a \geq 0$. Assume that that there exists $a_\ast>0$ such that $\Psi$ extends (analytically) to a neighborhood to the left of $0$ containing $-a_\ast$, $\Psi(-a_\ast) = -a$ and $\Psi'(-a_\ast)<\infty$. For $x>0$, define the first passage time of $\eta$ across $x$,
	\[
	S_x := \inf\{s>0, \; \eta_s>x\}.
	\]
	Then for all $b\geq 0$, there exists a constant $c=c(a,b)\in (0,1)$ such that,
	\begin{equation} \label{eq: lem asymptotics subordinator equivalent}
		\bbE\left[ \mathrm{e}^{-a S_x - b \eta_{S_x}}\right] \underset{x\to \infty}{\sim} c\,\mathrm{e}^{-(a_\ast+b) x}. 
	\end{equation}
	Moreover, for all $b>-a_\ast$, we have the upper-bound
	\begin{equation} \label{eq: lem asymptotics subordinator upper bound}
		\bbE\left[\mathrm{e}^{-a S_x - b \eta_{S_x}}\right]  \le \mathrm{e}^{-(a_\ast+b) x}, \quad \text{for all } 			x>0.
	\end{equation}
\end{lem}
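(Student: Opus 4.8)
The plan is to use an exponential change of measure (Esscher transform). Since $\Psi$ extends analytically to a neighborhood of $-a_\ast$ with $\Psi(-a_\ast)=-a$, analytic continuation of the Laplace transform gives $\bbE[\mathrm{e}^{a_\ast \eta_t}]=\mathrm{e}^{-t\Psi(-a_\ast)}=\mathrm{e}^{at}$, so
\[
M_t:=\exp(a_\ast \eta_t - a t),\qquad t\ge 0,
\]
is a nonnegative martingale (for the natural filtration $(\mathcal F_t)$ of $\eta$) with $M_0=1$. We may assume $\eta$ is non-degenerate; in the regime of interest $\eta$ has no killing, so $\eta_t\to\infty$ and $S_x<\infty$ a.s. for every $x>0$ (and $\eta$ being right-continuous and increasing, $\eta_{S_x}\ge x$ always).

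The proof of the upper bound \eqref{eq: lem asymptotics subordinator upper bound} is then immediate from the identity $\mathrm{e}^{-aS_x-b\eta_{S_x}}=M_{S_x}\,\mathrm{e}^{-(a_\ast+b)\eta_{S_x}}$: when $b>-a_\ast$ we have $a_\ast+b>0$ and $\eta_{S_x}\ge x$, hence $\mathrm{e}^{-(a_\ast+b)\eta_{S_x}}\le \mathrm{e}^{-(a_\ast+b)x}$, and so
\[
\bbE\left[\mathrm{e}^{-aS_x-b\eta_{S_x}}\right]\le \mathrm{e}^{-(a_\ast+b)x}\,\bbE[M_{S_x}]\le \mathrm{e}^{-(a_\ast+b)x},
\]
where the last step is the optional-stopping inequality for the nonnegative martingale $M$ (Fatou applied to $M_{S_x\wedge t}$).

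For the sharp asymptotics \eqref{eq: lem asymptotics subordinator equivalent} I would tilt by $M$: define $\wt\bbP$ on $\mathcal F_t$ by $\mathrm{d}\wt\bbP/\mathrm{d}\bbP=M_t$. A standard computation shows that under $\wt\bbP$ the process $\eta$ is again a subordinator, with Laplace exponent $\wt\Psi(q)=\Psi(q-a_\ast)-\Psi(-a_\ast)$; in particular $\wt\Psi(0)=0$ (no killing) and $\wt\Psi'(0)=\Psi'(-a_\ast)\in(0,\infty)$, so under $\wt\bbP$ the subordinator $\eta$ has finite mean $\mu:=\Psi'(-a_\ast)$ and drifts to $+\infty$, whence $S_x<\infty$ $\wt\bbP$-a.s. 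Using the martingale property of $M$, optional sampling at $S_x\wedge t$, and letting $t\to\infty$ (the integrand $\mathrm{e}^{-(a_\ast+b)\eta_{S_x}}\le 1$ so monotone/dominated convergence applies), one gets
\[
\bbE\left[\mathrm{e}^{-aS_x-b\eta_{S_x}}\right]=\bbE\left[M_{S_x}\mathrm{e}^{-(a_\ast+b)\eta_{S_x}}\right]=\wt\bbE\left[\mathrm{e}^{-(a_\ast+b)\eta_{S_x}}\right]=\mathrm{e}^{-(a_\ast+b)x}\,\wt\bbE\left[\mathrm{e}^{-(a_\ast+b)O_x}\right],
\]
where $O_x:=\eta_{S_x}-x\ge 0$ is the overshoot. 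It then remains to show $\wt\bbE[\mathrm{e}^{-(a_\ast+b)O_x}]\to c$ for some $c=c(a,b)\in(0,1)$. This is exactly the content of classical renewal theory for subordinators: since $\eta$ has finite mean under $\wt\bbP$, the overshoot $O_x$ converges in distribution as $x\to\infty$ to the stationary overshoot law $O_\infty$, whose distribution is $\tfrac1\mu\big(\overline{\wt\nu}(y)\,\mathrm{d}y+d\,\delta_0\big)$, with $d$ the drift and $\overline{\wt\nu}$ the tail of the Lévy measure of $\eta$ under $\wt\bbP$ (see e.g.\ \cite{krs-subordinators} and the references therein). Since $y\mapsto \mathrm{e}^{-(a_\ast+b)y}$ is bounded and continuous on $[0,\infty)$ and $a_\ast+b>0$, this yields $\wt\bbE[\mathrm{e}^{-(a_\ast+b)O_x}]\to c:=\wt\bbE[\mathrm{e}^{-(a_\ast+b)O_\infty}]$; clearly $c>0$, while $c<1$ because $\wt\bbP(O_\infty>0)>0$ (the Lévy measure is nonzero, so the atom $d/\mu$ at $0$ is strictly smaller than $1$).

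The main obstacle is the renewal-theoretic step — i.e.\ establishing that the overshoot converges to a \emph{non-degenerate} limit — together with the care needed to carry out the change of measure at the unbounded stopping time $S_x$ (ruling out mass escaping to $\{S_x=\infty\}$ under $\bbP$, which is what forces the no-killing assumption in the intended application). Both are standard once the hypotheses on $\Psi$ are in place, so the write-up would mostly consist of citing the subordinator renewal theorem and spelling out the optional-sampling justification; for a fully general $\eta$ one would also note the non-lattice condition, which holds in all our applications since the Lévy measure there has a density.
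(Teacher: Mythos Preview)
Your proposal is correct and follows essentially the same approach as the paper: both use the Esscher tilt via the martingale $M_t=\exp(a_\ast\eta_t-at)$, pass to the tilted measure under which $\eta$ has Laplace exponent $q\mapsto\Psi(q-a_\ast)+a$ and finite mean $\Psi'(-a_\ast)$, then invoke the renewal theorem for the overshoot $\eta_{S_x}-x$. Your treatment of the upper bound is in fact slightly cleaner than the paper's: the paper splits into the cases $b\ge 0$ (where it gets equality in the change-of-measure identity via dominated convergence) and $-a_\ast<b<0$ (where it only gets an inequality via Fatou), whereas you bypass this case distinction by bounding $\mathrm{e}^{-(a_\ast+b)\eta_{S_x}}\le \mathrm{e}^{-(a_\ast+b)x}$ first and then using $\bbE[M_{S_x}]\le 1$ directly.
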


\begin{proof}
	The result is a consequence of exponential tilting. Since $\Psi(-a_\ast) = -a$, the process 
	\[
	M^a_s := \mathrm{e}^{a_\ast \eta_s - a s}, \quad s\ge0,
	\]
	is a martingale. Let $\bbP^a$ denote the tilted probability measure with respect to the martingale $M^a$. This is the measure defined from Kolmogorov's extension theorem by $\mathrm{d}\bbP^a = M^a_s \cdot \mathrm{d}\bbP$ on $\sigma(\eta_r,  r\le s)$ for all $s\ge 0$. Plainly, under $\bbP^a$, $\eta$ is a Lévy process with Laplace exponent $\Psi^a(q) = \Psi(q-a_\ast) + a$. 
	We claim that by an optional stopping type argument, we have 
	\begin{equation}\label{eq:_claim_ch_meas}
		\bbE\left[ \mathrm{e}^{-a S_x - b \eta_{S_x}}\right]
		=
		\bbE^a\left[ \mathrm{e}^{-(a_\ast+b) \eta_{S_x}}\right]\quad \text{if $b\geq 0$},
	\end{equation}
	and
	\begin{equation}\label{eq:_claim_ch_meas2}
		\bbE\left[ \mathrm{e}^{-a S_x - b \eta_{S_x}}\right]\leq\bbE^a\left[ \mathrm{e}^{-(a_\ast+b) \eta_{S_x}}\right] \quad \text{if $-a_\ast < b < 0$} ,
	\end{equation}
	where $\bbE^a$ denotes expectation with respect to $\bbP^a$.
	Indeed, since subordinators are transient (see for instance \cite[Chapter III]{bertoin1996levy}), $S_x$ is a.s.\ finite, whence a.s.\  
	\[
	\lim_{n\to \infty} \mathrm{e}^{-(a_\ast+b) \eta_{n\wedge S_x}} = \mathrm{e}^{-(a_\ast+b) \eta_{S_x}}
	\quad \text{and} \quad 
	\lim_{n\to \infty}\mathrm{e}^{-a (n\wedge S_x)-b\, \eta_{n\wedge S_x}}=\mathrm{e}^{-a  S_x-b\, \eta_{S_x}}.
	\]
	Moreover, since $a_\ast+b\geq 0$, by dominated convergence (the domination is straightforward once we remark that under $\bbP^a$, $\eta$ is still a subordinator, so that $\eta_{n\wedge S_x}\geq 0$),
	\begin{equation}\label{eq:first_step_lim}
		\bbE^a\left[ \mathrm{e}^{-(a_\ast+b) \eta_{S_x}}\right]=\lim_{n\to \infty}\bbE^a\left[ \mathrm{e}^{-(a_\ast+b) \eta_{n\wedge S_x}}\right]=\lim_{n\to \infty}\bbE\left[ \mathrm{e}^{-a (n\wedge S_x)-b\, \eta_{n\wedge S_x}}\right].
	\end{equation}
	For $a\geq 0$ and $b \geq 0$,
	another application of dominated convergence gives the claim in \eqref{eq:_claim_ch_meas}. For $a\geq 0$ and $-a_\ast < b < 0$, we note that by Fatou's lemma
	\[
	\bbE\left[ \mathrm{e}^{-a S_x - b \eta_{S_x}}\right]
	=
	\bbE\left[\liminf_{n\to \infty}\mathrm{e}^{-a (n\wedge S_x)-b\, \eta_{n\wedge S_x}} \right]\leq \liminf_{n\to \infty} \bbE\left[\mathrm{e}^{-a (n\wedge S_x)-b\, \eta_{n\wedge S_x}} \right]\stackrel{\eqref{eq:first_step_lim}}{=}\bbE^a\left[ \mathrm{e}^{-(a_\ast+b) \eta_{S_x}}\right],
	\]
	which is the claim in \eqref{eq:_claim_ch_meas2}.
	
	First of all, the inequality \eqref{eq: lem asymptotics subordinator upper bound} is a trivial consequence of \eqref{eq:_claim_ch_meas} and \eqref{eq:_claim_ch_meas2}, together with the observation that $\eta_{S_x}\ge x$. We now prove the claim in \eqref{eq: lem asymptotics subordinator equivalent}. Since by assumption $\bbE^a[\eta_1]=(\Psi^a)'(0) = \Psi'(-a_\ast) < \infty$, by the renewal theorem \cite[Theorem 1]{bvhs-renewal}, $\eta_{S_x} - x$ converges in distribution under $\bbP^a$ to a limiting non-degenerate random variable as $x\to\infty$. Therefore
	\[
	\bbE^a\left[ \mathrm{e}^{-(a_\ast+b) \eta_{S_x}}\right] \underset{x\to \infty}{\sim} c\,\mathrm{e}^{-(a_\ast+b) x},
	\]
	for some constant $c\in(0,1)$. The statement of the lemma then follows from \eqref{eq:_claim_ch_meas}.
\end{proof}
\begin{remark}
	The constant $c$ can be made explicit from \cite[Theorem 1]{bvhs-renewal}. 
\end{remark}

\section{Estimates for the lower bound}
\label{sec-sec2}

The main goal of this section is to provide first and second moment estimates involving our selection rule $\sfS$ defined in \eqref{eq:selection}, which will be later used in \cref{sec: lower bound seq permuton} to lower bound the length of the longest increasing subsequence in permutations sampled from the Brownian separable permutons. More precisely, we provide in \cref{prop: estimate killing} asymptotics as $\eps\to 0$ for the probability that the tagged fragment \eqref{eq: def tagged cell} survives (in the sense of $\sfS$) until getting smaller than $\eps$, and similar estimates in \cref{prop: two point function estimates} for the two-point function.

\subsection{Embedding the selection rule in the tagged cell}
\label{sec: first moment}

Recall from \cref{sec: strategy lower bound} the following setup. Every local minimum $t_b\in(0,1)$ of the Brownian excursion $\efrak$ corresponds to a so-called branching height $b = \efrak(t_b)$ and comes with a $\oplus$ or $\ominus$ sign given by $\sfrak$. We denote by $\cal B_{\oplus}$ and $\cal B_{\ominus}$ the sets of branching heights respectively associated with $\oplus$ and $\ominus$ signs. Each branching height $b$ is splitting one interval $(a,c)$ into the two sub-intervals $(a,t_b)$ and $(t_b,c)$ (see \cref{fig:excursion-split}, p.\ \pageref{fig:excursion-split}). Our selection rule $\sf S$ is to discard at each \emph{negative} branching height the smaller of the two intervals $(a,t_b)$ and $(t_b,c)$ in terms of Lebesgue measure.
In this paragraph we embed this strategy in the fragmentation $\Ffrak$ and the tagged fragment $F^{U}$ introduced in \eqref{eq: def tagged cell}. For a right-continuous non-negative process $X$, we introduce the notation $\Delta X(t):= X(t)-X(t^-)$ for its possible jump at time $t$. Recall that the branching heights of $\efrak$ are encoded by jumps in the fragmentation process $\Ffrak$. One can therefore enforce the selection rule $\sfS$ in the tagged fragment $F^{U}$ by killing it at the first \emph{negative} branching height $h$ in $(\efrak,\sfrak,p)$ where the size of the other fragment, which is $-\Delta F^{U}(h)$, is larger than $F^{U}(h)$; \emph{c.f.}\ Figure \ref{fig-selection-rule}. Moreover, note that in the notation of \cref{prop: law tagged}, whenever $F^{U}(h)<-\Delta F^{U}(h)$, then after time-change we have that $\Delta \xi_{\rho(h)} >\log 2$. Based on this, we set
\begin{equation} \label{eq: killing H^U}
	H^{U}_{\sfS}:=\inf\{h\in\mcl B_{\ominus}, \; -\Delta F^{U}(h)>F^{U}(h)\},
\end{equation}
and $\overline F^{U}(h):=F^{U}(h)\mathds{1}_{h<H^{U}_{\sfS}}$, $h\geq 0$.

\begin{figure}[ht!]
	\begin{center}
		\includegraphics[width=.65\textwidth]{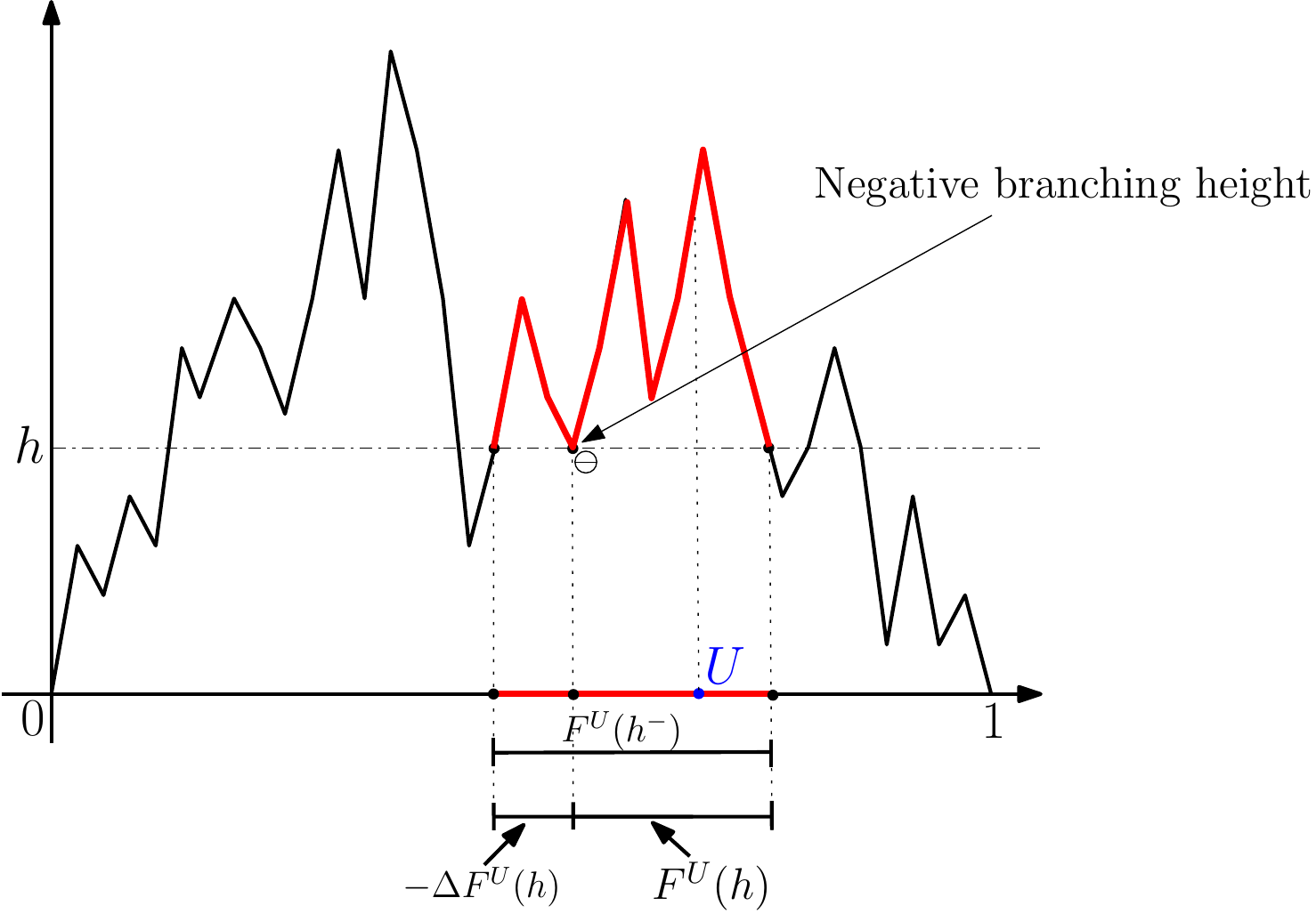}  
		\caption{\label{fig-selection-rule} A sketch explaining how we enforce the selection rule $\sfS$ in the tagged fragment $F^{U}$: at the first negative height when the fragment $-\Delta F^{U}(h)$ is larger than the tagged fragment $F^{U}(h)$, we kill the process $F^{U}$ at zero, \textit{i.e.}\ with the notation in \eqref{eq: killing H^U}, we are considering the process $\overline F^{U}(h)=F^{U}(h)\mathds{1}_{h<H^{U}_{\sfS}}$.}
	\end{center}
	\vspace{-3ex}
\end{figure}

This also motivates the introduction of two new processes $\chi$ and $\overline\xi$ describing the previous construction at the level of the Lévy process $\xi$ in Proposition \ref{prop: law tagged}. For any jump time $s>0$ of $\xi$, the variable $\chi_s$ encodes the $\oplus$ or $\ominus$ sign in $\sfrak$ attached to the branching height $\rho^{-1}(s)$ of $\efrak$. More precisely, conditional on $\xi$, for each $s>0$ such that $\Delta \xi_s>0$, let $\chi_s = 1$ or $\chi_s = 0$ with probability $p$ and $1-p$ respectively (for other times we send $\chi_s:=\lozenge$ to a cemetery state). We take the random variables $\{\chi_s  : \Delta \xi_s > 0\}$ to be conditionally independent given $\xi$.

One can then implement the strategy $\sfS$ on $\xi$, obtaining a new process $\overline\xi$ which is a killed version of $\xi$. Informally $\overline\xi$ is constructed as follows: let $s>0$ be a jump time for $\xi$. If $\chi_s=1$, we do nothing. Otherwise $\chi_s=0$: then we kill $\overline\xi$ (\textit{i.e.}\ we set $\overline\xi_s=\infty$) if, and only if, $\Delta \xi_s>\log 2$.  
More precisely, let $\xi^\sfS$ be a Lévy process with intensity measure on $(0,+\infty)$.
\begin{equation} \label{eq: Lambda^S}
	\Lambda^\sfS(\mathrm{d}x) 
	:= \Lambda(\mathrm{d}x)|_{x\in (0, \log 2]} +p \Lambda(\mathrm{d}x)|_{x\in (\log 2, +\infty)} 
	= (\mathds{1}_{x\in (0, \log 2]}+p\mathds{1}_{x\in (\log 2, +\infty)}) \frac{2\,\mathrm{e}^x\mathrm{d}x}{\sqrt{2\pi(\mathrm{e}^x-1)^3}}.
\end{equation}
We also set 
\begin{equation} \label{eq:Phi^S}
	\Phi^\sfS(q) := \int_0^{\infty} (1-\mathrm{e}^{-qx}) \Lambda^\sfS(\mathrm{d}x), \quad q>-\frac12,
\end{equation}
for the Laplace exponent of $\xi^\sfS$. One may write $\xi = \xi^\sfS +\xi^{\mathsf{K}}$, where $\xi^{\mathsf{K}}$ is an independent Lévy process with Lévy measure supported on $(\log 2,+\infty)$
\[
\Lambda^{\mathsf{K}}(\mathrm{d}x) := (1-p)  \Lambda(\mathrm{d}x)|_{x\in (\log 2, +\infty)} = \frac{2(1-p)\,\mathrm{e}^x\mathrm{d}x}{\sqrt{2\pi(\mathrm{e}^x-1)^3}}.
\]
In this description, as a result of the thinning operation (see\footnote{Informally, the thinning operation is the operation which allows one to select some points in a Poisson point process according to some random rule, obtaining a new Poisson point process with some ``thinned'' intensity measure.} \textit{e.g.} \cite[Section 2.2.2]{baccelli2020random}), $\overline\xi$ has the law of $\xi^\sfS$ killed at the first time $T$ when $\xi^{\mathsf{K}}$ has a jump.
Note also that, in light of \cref{prop: law tagged}, one has that $\overline F^{U}(h)=\exp(-\overline \xi_{\rho(h)})$ for all $h\ge 0$. Additionally, $\Lambda^{\mathsf{K}}$ has finite total mass, so that $T$ is an exponential random variable with parameter 
\begin{equation} \label{eq: lambda}
	\lambda(p) = \int_{\log 2}^\infty \Lambda^{\mathsf{K}}(\mathrm{d}x) = 2(1-p) \sqrt{\frac{2}{\pi}}.
\end{equation}
There is a natural correspondence between $T$ and $H^{U}_\sfS$, which is just given by a Lamperti time-change (recall \eqref{eq:def_time_change}).

\medskip

\subsection{First moment estimate} \label{sec: first moment estimates}

Our first estimate in this subsection concerns the probability that the fragment targeted at the uniform point $U$ survives \emph{long enough}, in the sense that it reaches some small value before it gets (possibly) discarded. Let $\eps>0$, and 
\begin{equation} \label{eq: U hitting eps}
	H^{U}_{\eps} := \inf\{h>0, \; F^{U}(h) < \eps\}.
\end{equation}
Recalling the height $H_{\sfS}^U$ from \eqref{eq: killing H^U}, we also introduce the event 
\begin{equation}\label{eq:key_event}
	\calE_{\eps}^{U} := \left\{H^{U}_{\eps}<H^{U}_{\sfS}\right\},
\end{equation}
\textit{i.e.}, the event that the fragment targeted at the uniform point $U$ gets smaller than $\eps$ before it gets (possibly) discarded.

We fix $p\in(0,1)$ for the rest of the section. All the constants appearing in the next propositions depend on $p$, even if not explicitly stated.

\begin{prop} \label{prop: estimate killing}
	Let $\lstar(p)$ be the only positive solution to the equation $\Phi^\sfS(-\lstar(p)) = -\lambda(p)$, with $\Phi^\sfS$ as in \eqref{eq:Phi^S} and $\lambda(p)$ as in \eqref{eq: lambda}.
	The probability of $\calE_\eps^U$ satisfies
	\begin{equation} \label{eq: Ecal_eps uniform upper bound}
	\bbP(\calE_\eps^U) \le \eps^{\lstar(p)}, \quad\text{for all } \eps>0.
	\end{equation}
	Moreover, there exists a constant $c>0$ such that
	\begin{equation} \label{eq: Ecal_eps equivalent}
	\bbP\big(\calE_{\eps}^{U}\big) \underset{\eps\to 0}{\sim} c\,\eps^{\lstar(p)}.
	\end{equation}
\end{prop}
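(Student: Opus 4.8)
The plan is to recast the event $\calE_\eps^U$ in terms of the killed subordinator $\overline\xi$ (equivalently, of $\xi^\sfS$ killed at an independent exponential time $T$) and then apply the first-passage asymptotics of \cref{lem: asymptotics subordinator}. Recall from \cref{prop: law tagged} that $F^U(h) = \exp(-\xi_{\rho(h)})$, and from \cref{sec: first moment} that the selection rule $\sfS$ amounts to replacing $F^U$ by $\overline F^U(h) = \exp(-\overline\xi_{\rho(h)})$, where $\overline\xi$ is $\xi^\sfS$ killed at the first jump time $T$ of the independent finite-activity process $\xi^\mathsf{K}$. Since the Lamperti time-change $\rho$ is a strictly increasing bijection from $[0,h^U)$ onto $[0,\infty)$ (before absorption) and leaves the ordering of heights intact, the event $\{H^U_\eps < H^U_\sfS\}$ translates — writing $a = \log(1/\eps) > 0$ — into the event that the killed subordinator $\overline\xi$ reaches level $a$ before being killed, i.e. $\{S^\sfS_a < T\}$, where $S^\sfS_a := \inf\{s>0 : \xi^\sfS_s > a\}$ is the first-passage time of $\xi^\sfS$ across $a$. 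Here I should be slightly careful to check that the time-change does not interfere with the comparison of $H^U_\eps$ and $H^U_\sfS$; this is where the "natural correspondence between $T$ and $H^U_\sfS$ given by a Lamperti time-change" remarked at the end of \cref{sec: first moment} does the work, and I would spell out that $S^\sfS_a$ and $T$ live on the same (pre-time-change) clock.

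Next, conditioning on $\xi^\sfS$ and using that $T$ is exponential with parameter $\lambda(p) = 2(1-p)\sqrt{2/\pi}$ (from \eqref{eq: lambda}), independent of $\xi^\sfS$, gives
\begin{equation*}
	\bbP(\calE_\eps^U) = \bbP(S^\sfS_a < T) = \bbE\!\left[\mathrm{e}^{-\lambda(p)\, S^\sfS_a}\right].
\end{equation*}
Now I would apply \cref{lem: asymptotics subordinator} with $\eta = \xi^\sfS$, with $a$ (the killing rate) equal to $\lambda(p)$, and with $b = 0$. The hypothesis to verify is that $\Psi = \Phi^\sfS$ extends analytically to a neighborhood of $-\lstar(p)$ with $\Phi^\sfS(-\lstar(p)) = -\lambda(p)$ and $(\Phi^\sfS)'(-\lstar(p)) < \infty$. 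Existence and uniqueness of such a positive $\lstar(p)$ follows from the facts that $\Phi^\sfS$ is increasing and convex on its domain of definition, $\Phi^\sfS(0) = 0$, and $\Phi^\sfS(q) \to -\infty$ as $q \to (-1/2)^+$ (which one reads off from the Lévy measure $\Lambda^\sfS$ in \eqref{eq: Lambda^S}, dominated by the measure $\Lambda$ of \cref{prop: law tagged}, whose Laplace exponent $\Phi(q) = 2\sqrt 2\,\Gamma(q+1/2)/\Gamma(q)$ blows up at $-1/2$); finiteness of the derivative at $-\lstar(p) > -1/2$ is automatic since $-\lstar(p)$ is interior to the domain. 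The lemma then yields both the exact upper bound \eqref{eq: lem asymptotics subordinator upper bound}, namely $\bbE[\mathrm{e}^{-\lambda(p) S^\sfS_a}] \le \mathrm{e}^{-\lstar(p)\,a} = \eps^{\lstar(p)}$, which is precisely \eqref{eq: Ecal_eps uniform upper bound}, and the asymptotic equivalence \eqref{eq: lem asymptotics subordinator equivalent}, $\bbE[\mathrm{e}^{-\lambda(p) S^\sfS_a}] \sim c\,\mathrm{e}^{-\lstar(p) a}$ as $a \to \infty$, i.e. $\bbP(\calE_\eps^U) \sim c\,\eps^{\lstar(p)}$ as $\eps \to 0$, with $c \in (0,1)$. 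This gives \eqref{eq: Ecal_eps equivalent}.

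The main obstacle, I expect, is not any single estimate but rather the bookkeeping needed to justify cleanly the reduction "$\{H^U_\eps < H^U_\sfS\} = \{S^\sfS_a < T\}$" at the level of the tagged fragment: one must check that $H^U_\sfS$ from \eqref{eq: killing H^U} really corresponds, after the Lamperti change of time, to the first jump of $\xi^\mathsf{K}$ (this uses the thinning description of $\overline\xi$ and the observation, recorded just above \eqref{eq: Lambda^S}, that $F^U(h) < -\Delta F^U(h)$ is equivalent to $\Delta\xi_{\rho(h)} > \log 2$), and that monotonicity of $\rho$ makes the comparison of the two heights equivalent to the comparison of the two (un-time-changed) times. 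There is also a minor point that $S^\sfS_a < \infty$ a.s. (subordinators are transient), so that the tagged fragment does reach every level $\eps \in (0,1)$ on the event that it is not killed — but this is exactly the transience input already used inside the proof of \cref{lem: asymptotics subordinator}. Once these identifications are in place, the proposition is an immediate corollary of \cref{lem: asymptotics subordinator}.
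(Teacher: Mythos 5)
Your proposal is correct and follows essentially the same route as the paper: rewrite $\bbP(\calE_\eps^U)$ as $\bbP(T_{-\log\eps}<T)=\bbE[\mathrm{e}^{-\lambda(p)T_{-\log\eps}}]$ via the Lamperti representation and the independence of $\xi^\sfS$ and the exponential killing time $T$, then apply \cref{lem: asymptotics subordinator} with $a=\lambda(p)$ and $b=0$. The extra care you devote to the time-change bookkeeping and to verifying the hypotheses of the lemma matches what the paper does (the paper checks $(\Phi^\sfS)'(-q)<\infty$ for $q<1/2$ directly from \eqref{eq: Lambda^S} and \eqref{eq:Phi^S}).
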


\begin{remark}\label{eq:lapl_reg}
	We emphasize that the exponent $\lstar(p)$ is the same as the one appearing in \cref{rem:expr-bounds} (see also that remark for some particular values of $\lstar(p)$).
	Since $\Phi^\sfS$ is continuous on $(-1/2,+\infty)$ with $\Phi^\sfS(0)=0$ and $\Phi^\sfS(q) \to -\infty$ as $q\to (-1/2)^+$, it is plain that $\lstar(p)\in(0,\frac12)$. See the left-hand side of \cref{fig-Upper-lower-bound2} for the graph of $\alpha_*(p)\stackrel{\eqref{eq:alpha_lam}}{=}1-\lstar(p)$ for $p\in(0,1)$, obtained by solving numerically the equation $\Phi^\sfS(-\lstar(p)) = -\lambda(p)$.
\end{remark}

\begin{proof}[Proof of \cref{prop: estimate killing}]
	The Lamperti representation provides a natural point of view to address this question (see for instance \cite[Section 2.3]{curmar-swallow} where a similar approach was used in a different context).
	Recall the subordinators $\xi^{\sfS}$ and $\xi^{\mathsf K}$ and the time $T$ introduced just above~\eqref{eq: lambda}. Note that
	\begin{equation} \label{eq: hitting time Lamperti}
		\bbP\big(\calE_{\eps}^{U}\big)
		=
		\bbP\big(H^{U}_{\eps}<H^{U}_{\sfS}\big)
		=
		\bbP\big(T_{-\log \eps} < T\big),
	\end{equation}
	where $T_x:= \inf\{s>0, \; \xi^{\sfS}_s > x\}$ is the first passage time of $\xi^\sfS$ above $x>0$. Now since $T$ is an exponential random variable with parameter $\lambda(p)$ (see \eqref{eq: lambda}), by the independence of $\xi^\sfS$ and $\xi^\mathsf{K}$ (and so of $T_{-\log \eps}$ and $T$), 
	\begin{equation} \label{eq: hitting time exponential}
		\bbP\big(T_{-\log \eps} < T\big)
		=
		\bbE\left[\mathrm{e}^{-\lambda(p) T_{-\log \eps}}\right].
	\end{equation}
	\cref{prop: estimate killing} is then a consequence of \cref{lem: asymptotics subordinator}. First, note that for all $q<1/2$, $(\Phi^\sfS)'(-q) < \infty$ as is easily seen from \eqref{eq: Lambda^S} and \eqref{eq:Phi^S}. Applying the aforementioned lemma for $a = \lambda(p)>0$ and $b=0$, we obtain from \eqref{eq: hitting time Lamperti} and \eqref{eq: hitting time exponential} that 
	\[
	\bbP(\calE_\eps^U) \le \eps^{\lstar(p)},
	\]
	for all $\eps>0$, and
	\[
	\bbP\big(\calE_{\eps}^{U}\big) \underset{\eps\to 0}{\sim} c\, \eps^{\lstar(p)},
	\]
	for some constant $c>0$, where $\lstar(p)$ is the positive solution to the equation $\Phi^\sfS(-\lstar(p)) = -\lambda(p)$. This proves \eqref{eq: Ecal_eps uniform upper bound} and \eqref{eq: Ecal_eps equivalent}.
\end{proof}

We conclude this section with the following moment estimate.

\begin{prop} \label{prop: first moment estimate}
	Let $\zeta \geq 0$. There exists a constant $c=c(\zeta)>0$ such that
	\begin{equation} \label{eq: prop asymptotics first moment}
	\bbE\left[F^{U}(H^{U}_{\eps})^{\zeta} \cdot \mathds{1}_{\calE_{\eps}^{U}} \right]
	\underset{\eps\to 0}{\sim} 
	c\,\eps^{\zeta+\lstar(p)},
	\end{equation}
	where $\lstar(p)$ is the positive solution to $\Phi^\sfS(-\lstar(p)) = -\lambda(p)$.
	Moreover, if $\zeta>-\lambda_*(p)$,
	\begin{equation*} \label{eq: prop asymptotics first moment2}
	\bbE\left[F^{U}(H^{U}_{\eps})^{\zeta} \cdot \mathds{1}_{\calE_{\eps}^{U}} \right]
	\leq  
	\eps^{\zeta+\lstar(p)},
	\quad\text{for all } \eps>0.
	\end{equation*}
\end{prop}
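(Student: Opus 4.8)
The plan is to reduce everything to the subordinator asymptotics in \cref{lem: asymptotics subordinator}, essentially upgrading the computation already carried out in the proof of \cref{prop: estimate killing} (which is the $\zeta=0$ case). First I would recall the setup of \cref{sec: first moment}: the Lamperti representation $F^U(h)=\exp(-\xi^\sfS_{\rho(h)})$ valid up to the killing time, the description of $\overline\xi$ as $\xi^\sfS$ killed at the first jump time $T$ of $\xi^{\mathsf K}$, and the fact that $T$ is an $\op{Exp}(\lambda(p))$ variable independent of $\xi^\sfS$. The key bookkeeping step is to observe that, pathwise under this coupling, $\calE_\eps^U=\{H^U_\eps<H^U_\sfS\}$ coincides with $\{T_{-\log\eps}<T\}$, where $T_x:=\inf\{s>0:\xi^\sfS_s>x\}$, and that on this event $F^U(H^U_\eps)=\exp(-\xi^\sfS_{T_{-\log\eps}})$ (since $F^U$ goes below $\eps$ exactly when $\xi^\sfS$ first exceeds $-\log\eps$). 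This is precisely the content of \eqref{eq: hitting time Lamperti} together with the Lamperti time-change, so it requires no new work.

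Granting this, I would write
\[
\bbE\left[F^{U}(H^{U}_{\eps})^{\zeta}\,\mathds{1}_{\calE_{\eps}^{U}}\right]
=\bbE\left[\mathrm{e}^{-\zeta\,\xi^\sfS_{T_{-\log\eps}}}\,\mathds{1}_{T_{-\log\eps}<T}\right]
=\bbE\left[\mathrm{e}^{-\lambda(p)\,T_{-\log\eps}-\zeta\,\xi^\sfS_{T_{-\log\eps}}}\right],
\]
where the last equality comes from conditioning on $\xi^\sfS$ and integrating out the independent exponential time $T$. This is exactly $\bbE[\mathrm{e}^{-aS_x-b\eta_{S_x}}]$ from \cref{lem: asymptotics subordinator} with $\eta=\xi^\sfS$ (so $\Psi=\Phi^\sfS$), $x=-\log\eps$, $a=\lambda(p)$, $b=\zeta$, and $a_\ast=\lstar(p)$. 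The hypotheses of the lemma are met: $\Phi^\sfS$ extends analytically to $(-1/2,+\infty)$ with $(\Phi^\sfS)'(-\lstar(p))<\infty$ — both immediate from \eqref{eq: Lambda^S}–\eqref{eq:Phi^S} since $\lstar(p)\in(0,1/2)$ (cf.\ \cref{eq:lapl_reg}) and the Lévy density decays like $\mathrm{e}^{-x/2}$ at infinity — and $\Phi^\sfS(-\lstar(p))=-\lambda(p)$ by the very definition of $\lstar(p)$.

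Finally I would simply invoke the two conclusions of \cref{lem: asymptotics subordinator}: for $\zeta\ge 0$, \eqref{eq: lem asymptotics subordinator equivalent} gives $\bbE[F^{U}(H^{U}_{\eps})^{\zeta}\mathds{1}_{\calE_{\eps}^{U}}]\sim c\,\mathrm{e}^{-(\lstar(p)+\zeta)(-\log\eps)}=c\,\eps^{\zeta+\lstar(p)}$ for a constant $c=c(\lambda(p),\zeta)=c(\zeta)>0$, which is \eqref{eq: prop asymptotics first moment}; and for $\zeta>-\lambda_*(p)=-\lstar(p)$, \eqref{eq: lem asymptotics subordinator upper bound} gives the clean bound $\le\eps^{\zeta+\lstar(p)}$ for all $\eps>0$. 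There is no real obstacle here beyond the pathwise identification in the first paragraph, which is inherited from the proof of \cref{prop: estimate killing}; everything else is a direct substitution into the already-proved lemma.
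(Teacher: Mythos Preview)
Your proposal is correct and follows essentially the same route as the paper's own proof: rewrite the expectation via the Lamperti representation as $\bbE[\mathrm{e}^{-\zeta\,\xi^\sfS_{T_{-\log\eps}}}\mathds{1}_{T_{-\log\eps}<T}]$, integrate out the independent exponential time $T$ to obtain $\bbE[\mathrm{e}^{-\lambda(p)T_{-\log\eps}-\zeta\,\xi^\sfS_{T_{-\log\eps}}}]$, and then apply \cref{lem: asymptotics subordinator} with $a=\lambda(p)$ and $b=\zeta$. If anything, you are slightly more explicit than the paper in verifying the hypotheses of the lemma.
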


\begin{proof}
	Using the description in \cref{prop: law tagged}, together with the notation at the beginning of this subsection, we have that
	\[
	\bbE\left[F^{U}(H^{U}_{\eps})^{\zeta} \cdot \mathds{1}_{\calE_{\eps}^{U}} \right]
	=
	\bbE\left[\mathrm{e}^{-\zeta \xi^{\sfS}_{T_{-\log \eps}}} \cdot \mathds{1}_{T_{-\log \eps}< T}\right],
	\]
	where we recall that $T_x= \inf\{s>0, \; \xi^{\sfS}_s > x\}$ and $T$ is the exponential random variable with parameter $\lambda(p) =  2(1-p) \sqrt{\frac{2}{\pi}}$ introduced in \eqref{eq: lambda}. Moreover, $T$ is independent of $\xi^{\sfS}$, so that
	\[
	\bbE\left[F^{U}(H^{U}_{\eps})^{\zeta} \cdot \mathds{1}_{\calE_{\eps}^{U}} \right]
	=
	\bbE\left[\mathrm{e}^{-\zeta \xi^{\sfS}_{T_{-\log \eps}}-\lambda(p) T_{-\log \eps}}\right].
	\]
	An application of \cref{lem: asymptotics subordinator} (with $a=\lambda(p)> 0$ and $b=\zeta>-\lambda_*(p)$) yields the desired estimates. 
\end{proof}

\subsection{Two-point function estimate}
We now consider two independent uniform points $U$ and $V$ in $(0,1)$ also independent from all the other random quantities. Recall from \eqref{eq:key_event} the notation $\calE^U_\eps$ and $\calE^V_\eps$ respectively for the events that the fragments containing $U$ and $V$ survive in the strategy $\sfS$ (defined in \eqref{eq:selection}) until getting smaller than $\eps>0$. We are interested in the correlation between the two events $\calE^U_\eps$ and $\calE^V_\eps$. The aim of this section is to prove the following second moment estimate, which should be compared to \cref{prop: estimate killing}.
\begin{prop} \label{prop: two point function estimates}
There exists a constant $c>0$ such that
	\[
	\bbP\left(\calE^U_\eps \cap \calE^V_\eps\right) \underset{\eps\to 0}{\sim} c\, \eps^{2\lstar(p)},
	\]
	where $\lstar(p)$ is the only positive solution to the equation $\Phi^\sfS(-\lstar(p)) = -\lambda(p)$, with $\Phi^\sfS$ as in \eqref{eq:Phi^S}.
\end{prop}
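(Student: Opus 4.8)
The plan is to decompose the pair of excursions straddling $U$ and $V$ at the height $g$ where the two fragments first split apart, and then to exploit independence above that height. Conditional on $\efrak$, let $g = g^{U,V}$ be the branching height at which $U$ and $V$ are separated, \textit{i.e.} the value of $\efrak$ at the unique local minimum $\ell_{U\wedge V, U\vee V}$ between $U$ and $V$; for $h < g$ the two tagged fragments coincide, $F^U(h) = F^V(h) = F^{U,V}(h)$, and for $h > g$ they evolve in two disjoint sub-intervals. The first step is to describe the law of the common fragment $(F^{U,V}(h))_{h\le g}$ together with the split at height $g$: by the spinal decomposition of the Brownian excursion (or equivalently, since $F^{U,V}$ is again a tagged fragment in a fragmentation, by the description in \cref{prop: law tagged} applied to the fragment of $\Ffrak$ containing both points), $F^{U,V}$ is a positive self-similar Markov process with index $-\tfrac12$ started at $1$, run until the split, and at the split height the total remaining length $\ell := F^{U,V}(g^-)$ is divided into two pieces of lengths $\ell X$ and $\ell(1-X)$, where, conditionally on everything up to height $g$, the point $U$ lands in the piece of size proportional to that piece's length and $X$ is the relative length of the piece containing $U$. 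Crucially, the sign $\sfrak$ at the separating local minimum is an independent $p$-coin, independent of $X$ and of the two sub-excursions.

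The second step is to observe that on the split, conditionally on $\ell$, $X$, and the sign, the two fragments $F^U|_{h>g}$ and $F^V|_{h>g}$ evolve as two \emph{independent} copies of the tagged-fragment dynamics (run with the selection rule $\sfS$), started from $\ell X$ and $\ell(1-X)$ respectively — this is the branching property of the fragmentation. Moreover the event $\calE^U_\eps$ decomposes as: the common fragment survives $\sfS$ down to height $g$ (equivalently, up to height $g$ the selection rule has not killed it), AND, after the split, the $U$-fragment starting from size $\ell X$ survives $\sfS$ until it gets below $\eps$; and symmetrically for $V$ with $\ell(1-X)$. By self-similarity of the post-split dynamics (the killed process $\overline F$ inherits the scaling of $F^U$ since both $\sfS$'s thresholds $\log 2$ and the killing rate $\lambda(p)$ are scale-invariant in the Lamperti picture — indeed everything is phrased through $\xi^\sfS$ and $\xi^\mathsf{K}$ which do not see the starting point), the conditional probability that the $U$-fragment started from size $s$ survives until below $\eps$ is exactly $\bbP(\calE^{U}_{\eps/s})$ when $s>\eps$ (and we must separately handle the case $s\le \eps$, where the fragment is already small — but then one checks the split itself can be arranged so that, below $\eps$, we are past the regime of interest; more precisely the contribution of $\{\ell X\le\eps\}$ needs a small separate estimate using the upper bound $\bbP(\calE^U_\delta)\le \delta^{\lstar(p)}$ and is lower order). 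Putting this together,
\[
\bbP\left(\calE^U_\eps \cap \calE^V_\eps\right)
= \bbE\left[\mathds{1}_{\{\text{common fragment survives to }g\}}\; \bbP\big(\calE^{U}_{\eps/(\ell X)}\big)\,\bbP\big(\calE^{V}_{\eps/(\ell(1-X))}\big)\right],
\]
where the inner probabilities are evaluated at the (random) rescaled levels and the outer expectation is over $\ell$, $X$ and the excursion up to height $g$.

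The third step is the asymptotic analysis. Using \cref{prop: estimate killing}, $\bbP(\calE^{U}_{\delta}) \sim c\,\delta^{\lstar(p)}$ and the uniform bound $\bbP(\calE^U_\delta)\le\delta^{\lstar(p)}$, the inner product is $\sim c^2 (\eps/(\ell X))^{\lstar(p)} (\eps/(\ell(1-X)))^{\lstar(p)} = c^2 \eps^{2\lstar(p)} \ell^{-2\lstar(p)} (X(1-X))^{-\lstar(p)}$ whenever $\ell X$ and $\ell(1-X)$ are not too small, and the uniform bound controls the remainder. The factor $\eps^{2\lstar(p)}$ pulls out, and it remains to show that
\[
c' := c^2\,\bbE\left[\mathds{1}_{\{\text{common fragment survives to }g\}}\;\ell^{-2\lstar(p)}\,(X(1-X))^{-\lstar(p)}\right]
\]
is a finite, strictly positive constant — positivity is clear since the integrand is positive on an event of positive probability, and finiteness requires checking the moment of $\ell^{-2\lstar(p)}(X(1-X))^{-\lstar(p)}$ on the survival event is finite, which should follow from the explicit law of $(\ell,X)$ at the split (the split ratio $X$ has an explicit density bounded away from the endpoints behavior enough to integrate $(X(1-X))^{-\lstar(p)}$ since $\lstar(p)<1/2$, and $\ell$ is bounded by $1$ while the survival constraint forces $\ell$ not too small). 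I would also need a dominated-convergence argument to pass the $\eps\to 0$ limit inside the expectation, justified by the uniform bound $\bbP(\calE^U_\delta)\le\delta^{\lstar(p)}$ which gives an $\eps$-independent integrable majorant. The main obstacle, I expect, is making the split-at-height-$g$ decomposition rigorous — precisely identifying the joint law of the common tagged fragment, the split location, the two pieces' relative sizes, and the independence of the post-split dynamics and the sign — and carefully handling the boundary regime where one of the post-split pieces is already smaller than $\eps$ at the moment of the split (so that the event $\calE^U_\eps$ is "already decided" and the conditional probability is $1$ rather than $\bbP(\calE^U_{\eps/(\ell X)})$); this case must be shown to contribute only lower-order terms, which again follows from the uniform upper bound in \cref{prop: estimate killing}.
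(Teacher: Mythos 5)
Your proposal follows essentially the same route as the paper: split at the branching height separating $U$ and $V$, use the branching property and Brownian scaling to factor the post-split survival probabilities, apply the one-point asymptotics and uniform bound from \cref{prop: estimate killing}, and pass to the limit by dominated convergence (the paper isolates the "not yet split when the fragment reaches size $\eps$" case as a separate event whose contribution is $O(\eps^{1+\lstar(p)})$, hence lower order — this is exactly your boundary regime). One caveat: your justification for the finiteness of $\bbE[\mathds{1}_{\{\mathrm{survival}\}}\,\ell^{-2\lstar(p)}(X(1-X))^{-\lstar(p)}]$ is not right as stated, since the common fragment size $\ell$ is \emph{not} bounded away from $0$ on the survival event; the paper proves this negative-moment bound by reconstructing $F^V(a)$ from the size-biased jump of $F^U$, passing to the Lamperti representation, and applying the compensation formula, where the crucial inputs are $\lstar(p)<1/2$ (so $\Phi(1-2\lstar(p))>0$) and the integrability of $\mathrm{e}^{\lstar(p)x}(1-\mathrm{e}^{-x})^{1-\lstar(p)}$ against $\Lambda(\mathrm{d}x)$.
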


\begin{proof}
	Introduce, for $\eps>0$, the event that $U$ and $V$ split before reaching $\eps$, namely
\[
\calG_{\eps}^{U,V} 
:=
\left\{ I^{U}(H_{\eps}^{U}) \ne I^{V}(H_{\eps}^{V}) \right\}.
\]
Let $\eps>0$. We split the event $\calE^U_\eps \cap \calE^V_\eps$ according to $\calG_{\eps}^{U,V}$ and its complement.

We first deal with the two-point function on the event $(\calG_{\eps}^{U,V})^c$. In this case, we condition on $(\efrak,\sfrak,U)$ to obtain
\begin{equation}\label{eq: two point function complement G}
\bbP\left(\calE^U_\eps \cap \calE^V_\eps \cap (\calG_{\eps}^{U,V})^c\right)
=
\bbP\left(\calE^U_\eps \cap \{V\in I^U(H_\eps^U)\}\right)
=
\bbE\left[\mathds{1}_{\calE^U_\eps} \cdot \bbP\left(V\in I^U(H_\eps^U)|(\efrak,\sfrak, U)\right)\right].
\end{equation}
We now argue conditionally on $(\efrak,\sfrak,U)$. Since $H_\eps^U$ is a stopping time for the filtration $\sigma(I^U(h'), h'\leq h)$, defined for all $h > 0$, and $V$ is independent of $(\efrak, \sfrak, U)$, we a.s.\ have $\bbP\left(V\in I^U(H_\eps^U)|(\efrak,\sfrak, U)\right) = F^U(H_\eps^U)$. Plugging this identity into \eqref{eq: two point function complement G}, we infer that 
\[
\bbP\left(\calE^U_\eps \cap \calE^V_\eps \cap (\calG_{\eps}^{U,V})^c\right)
=
\bbE\left[ \mathds{1}_{\calE_\eps^U} \cdot F^U(H_\eps^U)\right].
\]
Using \cref{prop: first moment estimate}, we therefore conclude that there exists a constant $c_1>0$ such that 
\begin{equation} \label{eq: two point function equivalent event G^c}
\bbP\left(\calE^U_\eps \cap \calE^V_\eps \cap (\calG_{\eps}^{U,V})^c\right)
\underset{\eps\to 0}{\sim}
c_1 \eps^{1+\lstar(p)}.
\end{equation}

We next deal with the two-point function on the event $\calG_{\eps}^{U,V}$. Remark that this event can be rephrased as the existence of a branching height $a \le H_\eps^U \wedge H_\eps^V$ separating $U$ and $V$. We note that $a$ is a stopping time with respect to the filtration $\sigma(I^U(h'), I^V(h')\,,\, h'\le h)$, defined for all $h>0$. Moreover, given $I^U(a)$, the law of $U$ is uniform in $I^U(a)$ (and the same applies to $V$). Therefore, by conditioning at height $a$ and using the branching property of excursions above $a$,
	\begin{equation} \label{eq: branching height U,V}
		\bbP \Big( \calE_{\eps}^{U}\cap \calE_{\eps}^{V}\cap \calG^{U,V}_{\eps} \Big) \\
		=
		\bbE\left[\mathds{1}_{\{a<H^{U}_\sfS \wedge H^{V}_\sfS \wedge H^{U}_{\eps}\wedge H^{V}_{\eps}\}} \cdot \bbP_{F^{U}(a)}\big( \calE_{\eps}^{\widetilde{U}} \big) \cdot \bbP_{F^{V}(a)}\big( \calE_{\eps}^{\widetilde{V}}\big) \right],
	\end{equation}
		where $\bbP_\ell$ describes the law of a Brownian excursion with duration $\ell>0$, and conditionally on $F^{U}(a)$ and $F^{V}(a)$, $\widetilde{U}, \widetilde{V}$ are independent and uniform in $(0, F^{U}(a))$ and $(0, F^{V}(a))$ respectively. By Brownian scaling, for $\ell>0$ we have
	\[
	\bbP_{\ell}\big(\calE_{\eps}^{\widetilde{U}}\big)
	=
	 \bbP\big( \calE_{\eps/\ell}^{U'}\big) \quad \text{and} \quad 
	\bbP_{\ell}\big(\calE_{\eps}^{\widetilde{V}}\big)
	=
	 \bbP\big( \calE_{\eps/\ell}^{V'}\big),
	\]
	with $U', V'$ uniform in $(0,1)$. Equation \eqref{eq: branching height U,V} now boils down to
	\begin{equation} \label{eq: branching height U,V scaling}
		\bbP \Big( \calE_{\eps}^{U}\cap \calE_{\eps}^{V}\cap \calG^{U,V}_{\eps} \Big) 
		=
		\bbE\left[\mathds{1}_{\{a<H^{U}_\sfS \wedge H^{V}_\sfS \wedge H^{U}_{\eps}\wedge H^{V}_{\eps}\}} \cdot \bbP\left( \calE_{\eps/F^{U}(a)}^{U'} \,\middle|\, F^{U}(a) \right) \cdot \bbP\left( \calE_{\eps/F^{V}(a)}^{V'} \,\middle|\, F^{V}(a)\right) \right].
	\end{equation}
	We now take $\eps \to 0$. We claim that for some constant $c_2>0$,
	\begin{equation} \label{eq: two point function equivalent event G}
		\bbP \Big( \calE_{\eps}^{U}\cap \calE_{\eps}^{V}\cap \calG^{U,V}_{\eps} \Big)
		\underset{\eps\to 0}{\sim} 
		c_2 \, \eps^{2\lstar(p)} \cdot \bbE\left[\mathds{1}_{\{a<H^{U}_\sfS \wedge H^{V}_\sfS\}} F^{U}(a)^{-\lstar(p)} F^{V}(a)^{-\lstar(p)}\right].
	\end{equation}
	Indeed, from \eqref{eq: Ecal_eps equivalent} in \cref{prop: estimate killing}, we know that there exists a constant $c>0$ such that $\bbP\big( \calE_{\eps/F^{U}(a)}^{U'} \big| F^{U}(a) \big) \underset{\eps\to 0}{\sim} c \cdot (\eps/F^{U}(a))^{\lstar(p)}$ and likewise $\bbP\big( \calE_{\eps/F^{V}(a)}^{V'} \big| F^{V}(a) \big) \underset{\eps\to 0}{\sim} c \cdot (\eps/F^{V}(a))^{\lstar(p)}$. The result then follows from \eqref{eq: branching height U,V scaling} provided we can apply dominated convergence. We now justify that we can apply it. 
	
	First of all, recall from \eqref{eq: Ecal_eps uniform upper bound} that we have, for all $\eps>0$, the upper-bounds 
	\[
	\bbP\big( \calE_{\eps/F^{U}(a)}^{U'} \big| F^{U}(a) \big) \le (\eps/F^{U}(a))^{\lstar(p)} \quad \text{and} \quad \bbP\big( \calE_{\eps/F^{V}(a)}^{V'} \big| F^{V}(a) \big) \le (\eps/F^{V}(a))^{\lstar(p)}.
	\]
	The domination will therefore follow if we prove that
	\begin{equation} \label{eq: two point function domination}
	\bbE\left[\mathds{1}_{\{a<H^{U}_\sfS \wedge H^{V}_\sfS\}} F^{U}(a)^{-\lstar(p)} F^{V}(a)^{-\lstar(p)} \right] < \infty.
	\end{equation}
	To do so, we argue conditionally on $F^U$, by first noting that one can construct $F^V(a)$ from $F^U$ as follows. Recall that $a$ corresponds to the branching height separating $U$ and $V$, hence $F^V(a)$ is equal to the length of the interval not containing $U$ split by the jump of $F^U$ at height $a$, that is  $F^V(a)=-\Delta F^{U}(a)$ (recall \cref{fig-selection-rule}, p.\ \pageref{fig-selection-rule}). Now observe that given some fixed jump time $h$ of $F^U$, the probability that the branching height $a$ is equal to $h$ is given by the probability that $V$ belongs to the interval corresponding to the fragment $-\Delta F^U(h)$. In other words, conditionally on $F^{U}$, using the independence of $V$ and $U$,  one can build $F^{V}(a)$ by selecting the jump $-\Delta F^{U}(h)$ of $F^{U}$ at time $h$ with probability $-\Delta F^{U}(h)$. By removing the indicator and conditioning \eqref{eq: two point function domination} on $F^{U}$, one therefore gets
		\[
		\bbE\left[ \mathds{1}_{a<H_{\sfS}^{U}\wedge H_{\sfS}^{V}} (F^{U}(a)F^{V}(a))^{-\lstar(p)} \right] \\
		\le
		\bbE\left[ \sum_{0<h<h^U} (F^{U}(h))^{-\lstar(p)} (-\Delta F^{U}(h))^{1-\lstar(p)} \right].
		\]
	Using the Lamperti representation from \cref{prop: law tagged}, and noting that $-\Delta F^{U}(h)=\mathrm{e}^{-\xi_{b^-}}(1-\mathrm{e}^{-\Delta\xi_{b}})$ with $b=\rho(h)$, this becomes
	\begin{align*}
		\bbE\left[ \mathds{1}_{a<H_{\sfS}^{U}\wedge H_{\sfS}^{V}} (F^{U}(a)F^{V}(a))^{-\lstar(p)} \right]
		&\le
		\bbE\left[ \sum_{b>0} \mathrm{e}^{\lstar(p)\xi_b} \mathrm{e}^{-(1-\lstar(p))\xi_{b^-}} (1-\mathrm{e}^{-\Delta\xi_b})^{1-\lstar(p)} \right] \\
		&=
		\bbE\left[ \sum_{b>0} \mathrm{e}^{-(1-2\lstar(p))\xi_{b^-}}  \mathrm{e}^{\lstar(p)\Delta\xi_b}(1-\mathrm{e}^{-\Delta\xi_b})^{1-\lstar(p)} \right].
	\end{align*}
	Then an application of the compensation formula for $\xi$ (see \textit{e.g.} \cite[Theorem 4.4]{kyprianou2014fluctuations}) provides 
		\[
		\bbE\left[ \mathds{1}_{a<H_{\sfS}^{U}\wedge H_{\sfS}^{V}} (F^{U}(a)F^{V}(a))^{-\lstar(p)}\right] \\
		\le
		\bbE\left[ \int_0^\infty \mathrm{e}^{-(1-2\lstar(p))\xi_{b}}\mathrm{d}b\right] \cdot \int_0^\infty \Lambda(\mathrm{d}x) \mathrm{e}^{\lstar(p) x}(1-\mathrm{e}^{-x})^{1-\lstar(p)}.
		\]
	On the one hand, since $\lstar(p)<1/2$, it is clear from the expression of $\Lambda$ in \cref{prop: law tagged} that the second integral is finite. On the other hand, the first expectation is simply
	\[
		\bbE\left[\int_0^\infty \mathrm{e}^{-(1-2\lstar(p))\xi_{b}}\mathrm{d}b\right]
		=
		 \int_0^\infty \mathrm{e}^{-b\Phi(1-2\lstar(p))} \mathrm{d}b.
	\]
	As $\lstar(p)<1/2$, the observation that $\Phi(1-2\lstar(p))>0$ concludes the proof of \eqref{eq: two point function domination}. The domination is thus established, which proves \eqref{eq: two point function equivalent event G}.
	
	\cref{prop: two point function estimates} finally follows from the two asymptotics \eqref{eq: two point function equivalent event G^c} and \eqref{eq: two point function equivalent event G}.
	\end{proof}

\section{Lower bound for sequences sampled from the Brownian separable permutons} \label{sec: lower bound seq permuton}

The main goal of this section is to prove the lower bound in \cref{thm:upper_lower_perm}. As already pointed out in \cref{rem:expr-bounds}, $\alpha_*(p)>1/2$ for all $p\in(0,1)$ (indeed, $\alpha_*(0^+) = 1/2$ and $\alpha_*(1^-) = 1$, and $\alpha_*(p)$ is strictly increasing in $p\in(0,1)$). Hence, it is enough to prove the lower bound in the second item in the theorem statement. This is done in the following proposition.

\begin{prop}\label{prop:lwbound}
	Fix $p\in(0,1)$ and let $\alpha_*(p)=1-\lambda_*(p)$ be as in \cref{rem:expr-bounds}. Let $\sigma_n$ be a random permutation of size $n\in \bbN$ sampled from the Brownian separable permuton $\bm{\mu}_p$. Then for all $\alpha<\alpha_*(p)$, the following convergence in probability holds
	\begin{equation}\label{eq:conv-prob-inf}
		\frac{\LIS(\sigma_n)}{n^{\alpha}}\to \infty.
	\end{equation}
\end{prop}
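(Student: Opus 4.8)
Realize $\sigma_n=\Perm(\efrak,\sfrak,(U_i)_{i\le n})$ as in \cref{sect:sampling}, so that a.s.\ all the $U_i$ lie in $\mcl R_\efrak$ and $\vartriangleleft_{\efrak,\sfrak,p}$ totally orders them. The plan is to extract an increasing subsequence from the sample points that survive the selection rule $\sfS$ down to a well-chosen scale $\eps=\eps_n$. For $x\in(0,1)$, recall from \cref{sec-sec2} the heights $H^x_\eps$ and $H^x_\sfS$ and the event $\calE^x_\eps=\{H^x_\eps<H^x_\sfS\}$, and call $I^x(H^x_\eps)$ the \emph{surviving $\eps$-fragment} of $x$ when $\calE^x_\eps$ occurs. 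The combinatorial heart of the argument is: \emph{if $x,y\in\mcl R_\efrak$ with $x<y$ lie in two different surviving $\eps$-fragments, then $x\vartriangleleft_{\efrak,\sfrak,p}y$.} Indeed, if $b$ is the branching height separating $x$ and $y$, then $b<H^x_\eps\wedge H^y_\eps$ (otherwise $x$ and $y$ would not yet have separated when one of them reached scale $\eps$, forcing them into the same surviving fragment); and if $b$ were a negative branching, the one of $x,y$ sitting in the smaller of the two children would have $H_\sfS\le b$, contradicting its survival event; hence $b\in\mcl B_{\oplus}$, and at a positive branching $\vartriangleleft_{\efrak,\sfrak,p}$ agrees with the usual order of $[0,1]$, giving $x\vartriangleleft_{\efrak,\sfrak,p}y$. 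Therefore any transversal picking one sample point per distinct surviving $\eps$-fragment is an increasing subsequence of $\sigma_n$, so with $N_\eps:=\#\{i\le n:\calE^{U_i}_\eps\}$ and $C_\eps:=\#\{\{i,j\}:U_i\text{ and }U_j\text{ lie in a common surviving }\eps\text{-fragment}\}$,
\[
\LIS(\sigma_n)\ \ge\ \#\{\text{distinct surviving }\eps\text{-fragments hit by }U_1,\dots,U_n\}\ \ge\ N_\eps-C_\eps .
\]

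Next I would run a first-moment computation. By \cref{prop: estimate killing}, $\bbE[N_\eps]=n\,\bbP(\calE^U_\eps)\sim c\,n\,\eps^{\lstar(p)}$. Conditioning on $(\efrak,\sfrak,U)$ and using that $V$ is uniform and independent, $\bbP(U,V\text{ in the same surviving }\eps\text{-fragment})=\bbE[\mathds{1}_{\calE^U_\eps}\,F^U(H^U_\eps)]$, which is $\sim c_1\,\eps^{1+\lstar(p)}$ by \cref{prop: first moment estimate} with $\zeta=1$; hence $\bbE[C_\eps]\sim\tfrac{c_1}{2}n^2\eps^{1+\lstar(p)}$. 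Choosing $\eps=\eps_n$ with $n\eps_n\to0$ and $n\eps_n^{\lstar(p)}\to\infty$ — for instance $\eps_n=(n\log n)^{-1}$, for which $n\eps_n^{\lstar(p)}=n^{\alpha_*(p)}(\log n)^{-\lstar(p)}$ — gives $\bbE[C_\eps]=o(\bbE[N_\eps])$ and $\bbE[N_\eps]\gg n^\alpha$ for every $\alpha<\alpha_*(p)$. The collision term is then controlled by Markov, $\bbP(C_{\eps_n}\ge\tfrac14\bbE[N_{\eps_n}])\lesssim n\eps_n\to0$, so it only remains to prove that $N_{\eps_n}\gtrsim n^\alpha$ with probability tending to one.

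Conditionally on $(\efrak,\sfrak)$, $N_\eps$ is $\mathrm{Bin}(n,Q_\eps)$ with $Q_\eps:=\Leb(\{x:\calE^x_\eps\})=\bbP(\calE^U_\eps\mid\efrak,\sfrak)$, so it concentrates around $nQ_\eps$ once $nQ_\eps\to\infty$; the problem thus reduces to showing $n\,Q_{\eps_n}\to\infty$ in probability, equivalently $Q_{\eps_n}\gtrsim n^{\alpha-1}$ with probability tending to one (note $n^{\alpha-1}\ll\bbE[Q_{\eps_n}]\asymp\eps_n^{\lstar(p)}$ exactly because $1-\alpha>\lstar(p)$). \emph{This is the step I expect to be the main obstacle.} The two-point estimate \cref{prop: two point function estimates} gives $\bbE[Q_\eps^2]=\bbP(\calE^U_\eps\cap\calE^V_\eps)\sim c'\eps^{2\lstar(p)}$, so $\eps^{-\lstar(p)}Q_\eps$ is bounded in $L^2$ and Paley–Zygmund yields $Q_{\eps_n}\ge\tfrac c2\eps_n^{\lstar(p)}$ only with probability bounded below by a positive constant — the surviving mass $Q_\eps$ genuinely does not concentrate, its fluctuations being of the order of its mean, the familiar additive-martingale behaviour of self-similar fragmentations. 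In particular, these first- and second-moment bounds together with the $\mathrm{Bin}(n,Q_{\eps_n})$ representation already give: for every $\alpha''<\alpha_*(p)$ there is $\delta_0>0$ with $\bbP(\LIS(\Perm(\bm\mu_p,n))\ge n^{\alpha''})\ge\delta_0$ for all large $n$.

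To upgrade this positive probability to a probability tending to one I would exploit the branching/self-similarity structure of $\efrak$. Along the surviving path from the root of the excursion the first positive branching is reached after a $\mathrm{Geometric}(p)$ (hence a.s.\ finite) number of negative ones; at that branching $\efrak$ splits into two sub-excursions which, conditionally on their durations, are independent copies of the signed Brownian excursion and whose fragments are mutually increasing for $\vartriangleleft_{\efrak,\sfrak,p}$ (again because the separating branching is positive). Iterating $k=k_n\to\infty$ times produces $2^{k_n}$ conditionally independent sub-instances $I^{(k_n)}_1,\dots,I^{(k_n)}_{2^{k_n}}$, still pairwise mutually increasing, with $\LIS(\sigma_n)\ge\sum_\ell\LIS(\sigma_n|_{I^{(k_n)}_\ell})$ and $\LIS(\sigma_n|_{I^{(k_n)}_\ell})$ distributed, given the number $n_\ell$ of sample points it contains, as $\LIS(\Perm(\bm\mu_p,n_\ell))$ and conditionally independent over $\ell$. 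A branching-random-walk law of large numbers for the fragment durations shows that, with probability tending to one, a number $\to\infty$ of these sub-instances have $n_\ell\ge n^{\beta}$ for a suitable $\beta<1$; applying to each of them independently the positive-probability bound of the previous paragraph (with $\alpha''<\alpha_*(p)$ close enough to $\alpha_*(p)$ that $\beta\alpha''>\alpha$) shows that at least one of them satisfies $\LIS(\sigma_n|_{I^{(k_n)}_\ell})\ge n^{\beta\alpha''}\ge n^{\alpha}$ with probability tending to one. Making this amplification rigorous — in particular the large-deviations control of the $2^{k_n}$ fragment durations and the bookkeeping relating $k_n$, the sub-instance sizes and the target exponent $\alpha$ — is the delicate part; everything else is a routine combination of the estimates of \cref{sec-sec2} with the Chebyshev and Markov inequalities.
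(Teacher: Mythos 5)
Your proposal follows the same three-stage strategy as the paper's proof, and the first two stages are correct. The combinatorial lemma (points in distinct surviving $\eps$-fragments are automatically $\vartriangleleft_{\efrak,\sfrak,p}$-increasing, because the separating branching must be positive on the survival events) is exactly the paper's \cref{clm:clm1}. Your collision control via $\bbE[C_\eps]\sim\frac{c_1}{2}n^2\eps^{1+\lstar(p)}$ (from \cref{prop: first moment estimate} with $\zeta=1$) and the choice $\eps_n=(n\log n)^{-1}$ is a legitimate variant of the paper's route, which instead takes $\eps=1/n$ and uses a uniform binomial bound (\cref{clm:clm2}) showing no fragment captures more than $n^\zeta$ sample points; both yield $\bbP(\LIS(\sigma_n)\ge n^{\alpha''})\ge q>0$ for every $\alpha''<\alpha_*(p)$, and your diagnosis that Paley--Zygmund cannot do better because the surviving mass $Q_\eps$ has fluctuations of the order of its mean is exactly the obstacle the paper also has to circumvent.

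The one place where you diverge, and where your argument is genuinely incomplete, is the amplification from positive probability to probability tending to one. Your plan -- iterate to $2^{k_n}$ conditionally independent, mutually increasing sub-instances and run a branching-random-walk law of large numbers on their durations -- is plausible but requires uniform control of exponentially many fragment masses (including the mass lost at the $\mathrm{Geometric}(p)$-many negative branchings preceding each positive split), which you flag but do not carry out. The paper avoids all of this with a single split: it finds (\cref{clm:clm3}) one positive branching height $H$ at which the tagged fragment still has mass $>1/2$ and splits into two pieces each of Lebesgue measure at least a fixed $\delta>0$; these carry two conditionally independent copies of the problem of linear size $\ge\delta n/3$ w.h.p.\ (\cref{clm:clm4}), so the failure probability is at most $(1-q)^2$ plus $o(1)$ errors. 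Defining $q^*$ as the supremum of achievable success probabilities, the resulting self-improving inequality $1-q^*\le(1-q^*)^2$ forces $q^*=1$. This fixed-point bootstrap is the idea your sketch is missing: it replaces the delicate many-fragment large-deviations analysis by one application of the branching property plus an elementary algebraic observation, and it is the step you should adopt to close the argument.
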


\begin{proof}
	Fix $n\in \bbN$ and set $\eps=1/n$. Let $(U_i)_{i\leq n}$ be a sequence of i.i.d.\ uniform random variables in $(0,1)$, independent of all the other random quantities, and recall from \cref{sect:sampling} that $\sigma_n=\Perm(\efrak,\sfrak,(U_i)_{i\leq n})$.
	Recall also the event $\calE_{\eps}^{U}$ from \eqref{eq:key_event}, and introduce $S_n := \sum_{i=1}^n\mathds{1}_{\calE_{\eps}^{U_i}}$. Thanks to \cref{prop: estimate killing}, there exist two constants $c,C>0$ such that, for $n$ large enough,
	\begin{equation}\label{eq:first mom bound}
		\bbE[S_n]=\sum_{i=1}^n\bbP(\calE_{\eps}^{U_i})\in (c\cdot n^{\alpha_*(p)},C\cdot  n^{\alpha_*(p)}),
	\end{equation}
	where we used that $\eps=1/n$ and $\alpha_*(p)=1-\lstar(p)$. Moreover, from \cref{prop: two point function estimates}, if $U$ and $V$ are independent uniform random variables in $(0,1)$ also independent of all the other random quantities, there exists another constant $c>0$ such that, for $\eps=1/n$ small enough,
	\begin{equation}\label{eq:second mom bound}
		\bbP\bigg( \calE_{\eps}^{U} \cap \calE_{\eps}^{V}\bigg)\leq  c \cdot \eps^{2\lstar(p)}=c \cdot n^{2\alpha_*(p)-2}.
	\end{equation}
	Using \eqref{eq:first mom bound} and \eqref{eq:second mom bound} we deduce that for $n$ large enough,
	\begin{equation}\label{eq:second mom bound2}
		\bbE[S_n^2]=\sum_{i,j=1}^n\bbP(\calE_{\eps}^{U_i}\cap\calE_{\eps}^{U_j})\leq c\cdot n^{2\alpha_*(p)},
	\end{equation}
	for some (other) constant $c>0$.
	The Paley–Zygmund inequality with the bounds in \eqref{eq:first mom bound} and \eqref{eq:second mom bound2} implies that there exist two (other) constants $c,q>0$ such that for $n$ large enough,
	\begin{equation}\label{eq:prestimete}
		\bbP(S_n \geq c\cdot n^{\alpha_*(p)})\geq q.
	\end{equation}
	Now, recalling that $S_n = \sum_{i=1}^n\mathds{1}_{\calE_{\eps}^{U_i}}$, denote by $\{U^*_\ell,\ell\in[S_n]\}$ the $S_n$ random variables among $\{U_i,i\le n\}$ such that $\calE_{\eps}^{U^*_\ell}$ occurs. 
	We now want to extract a large subset $\{U^*_\ell,\ell\in \Lambda\}$ of $\{U^*_\ell,\ell\in[S_n]\}$ such that the corresponding intervals $\{I^{U^*_\ell}_{\eps},\ell\in \Lambda\}$ are disjoint (recall that $I^t_\eps:=I^t(H^t_\eps)$ denotes the largest interval containing $t$ of size smaller than $\eps$ in the fragmentation process). The reason to be interested in such a large subset resides in the following result. 
	
	\begin{lem}\label{clm:clm1}
		Almost surely, if $\{I^{U^*_\ell}_{\eps},\ell\in \Lambda\}$ are disjoint, then the permutation $\Perm(\efrak,\sfrak,(U^*_\ell)_{\ell\in \Lambda})$ is increasing.
	\end{lem}
	
	\begin{proof}[Proof of \cref{clm:clm1}]
		Assume that $\{I^{U^*_\ell}_{\eps},\ell\in \Lambda\}$ are disjoint and recall that for all $\ell$, the variable $U^*_\ell$ is chosen so that $\mcl E^{U^*_\ell}_{\eps}$ occurs. By definition of $\mcl E^{U}_{\eps}$ in \eqref{eq:key_event}, if $\mcl E^{U}_{\eps}$ occurs then the interval $I^U_{\eps}$ containing $U$ is not discarded by the selection rule $\sfS$. Now suppose for a contradiction that there exist two variables $U^*_{\ell_1}$ and $U^*_{\ell_2}$, with $U^*_{\ell_1}<U^*_{\ell_2}$ and $\ell_1,\ell_2 \in \Lambda$, such that the two corresponding values in $\Perm(\efrak,\sfrak,(U^*_\ell)_{\ell\in\Lambda})$ form an inversion.
		Since $I^{U^*_{\ell_1}}_{\eps}$ and $I^{U^*_{\ell_2}}_{\eps}$ are disjoint, there must exist (by \cref{lem:prop_support_perm}) a local minimum of $\efrak$ at some height $h=\efrak(t)$ which separates $U^*_{\ell_1}$ and $U^*_{\ell_2}$, in particular $t\notin I^{U^*_{\ell_1}}_{\eps}\cup I^{U^*_{\ell_2}}_{\eps}$, 
		$F^{U^*_{\ell_1}}(h^-)\geq \eps$ and $F^{U^*_{\ell_2}}(h^-)\geq \eps$. See \cref{fig-discarding}. Moreover, the sign corresponding to such a local minimum must be a $\ominus$ (since we assumed that the two elements corresponding to $U^*_{\ell_1}$ and $U^*_{\ell_2}$ in $\Perm(\efrak,\sfrak,(U^*_\ell)_{\ell\in\Lambda})$ form an inversion). We deduce that either $U^*_{\ell_1}$ or $U^*_{\ell_2}$ must be discarded by $\sfS$ at height $h$. Since $F^{U^*_{\ell_1}}(h^-)\geq \eps$ and $F^{U^*_{\ell_2}}(h^-)\geq \eps$, this contradicts the fact that $\mcl E^{U^*_{\ell_1}}_{\eps}$ and $\mcl E^{U^*_{\ell_2}}_{\eps}$ occur. 
	\end{proof}

	\begin{figure}[ht!]
		\begin{center}
			\includegraphics[width=.6\textwidth]{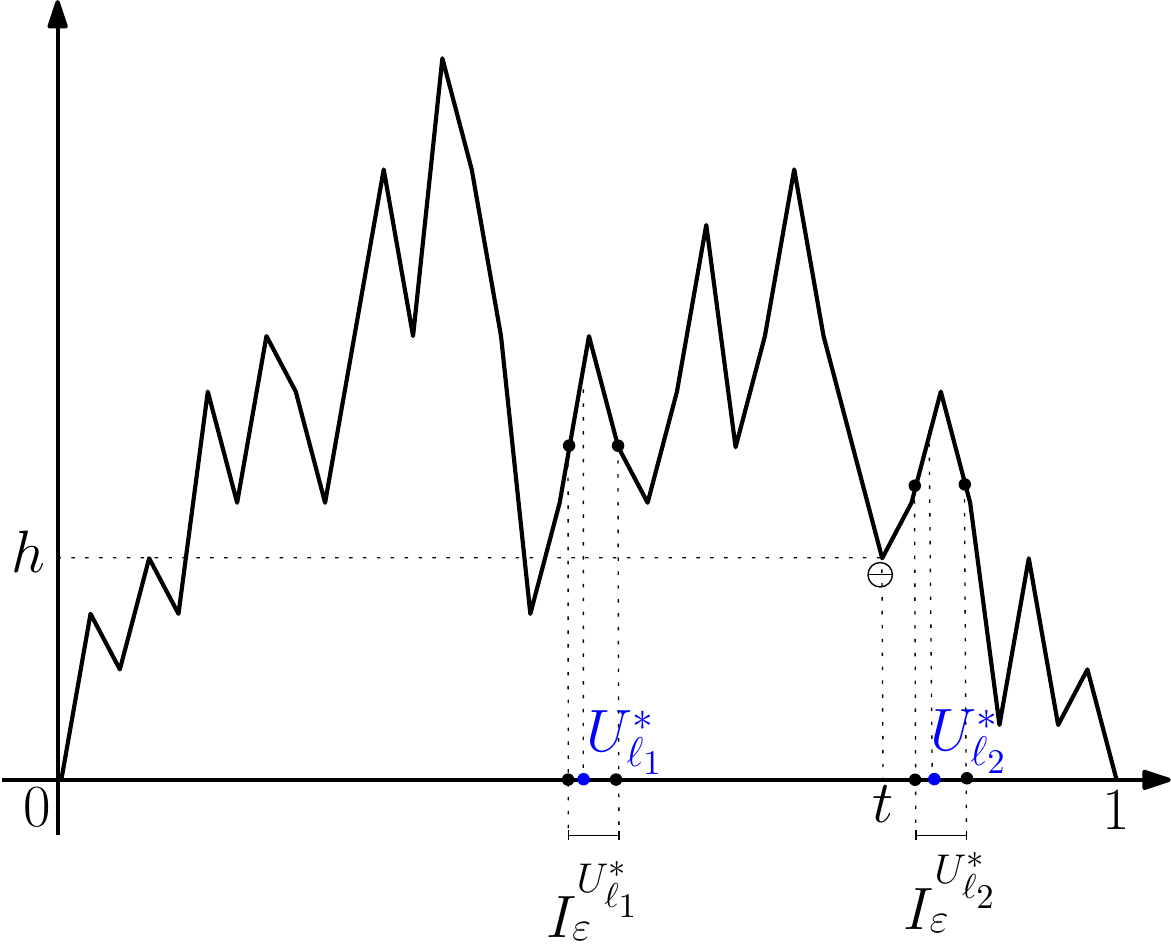}  
			\caption{\label{fig-discarding} An illustration explaining the proof of \cref{clm:clm1}.}
		\end{center}
		\vspace{-3ex}
	\end{figure}
	
	In order to guarantee that the size of $\Lambda$ (and hence the size of the increasing permutation $\Perm(\efrak,\sfrak,(U^*_\ell)_{\ell\in \Lambda})$) is large, we also need the following estimate.
	
	\begin{lem}\label{clm:clm2}
		Let $\eps=1/n$ and fix $\zeta>0$. There exist two constants $c_1,c_2>0$ (which may depend on $\zeta$ but not on $n$) such that
		\[\bbP\left(\forall i \in [n], \; \#\{j\in[n], \, U_j\in I^{U_i}_\eps\}<n^{\zeta}\right)\geq 1-c_1 n \cdot\exp(-c_2n^{\zeta}), \qquad \text{for all }n\geq 1.\]
	\end{lem}
	
	\begin{proof}[Proof of \cref{clm:clm2}]
		This is a standard binomial concentration argument. Let $\eps=1/n$ and fix $\zeta>0$. 
		We recall that the number of uniform variables among $(U_i)_{i\leq n}$ which fall in some prescribed interval $A$ follows a binomial distribution $\text{Bin}(n,|A|)$ with success probability $|A|$. Note that for all $n\ge 1$,
		\begin{align*}
			\bbP\left( \forall i \in [n], \, \#\{j\in[n], \, U_j\in I^{U_i}_\eps\}<n^{\zeta}\right)
			&\geq 1-\sum_{i=1}^n\bbP\big(\#\{j\in[n]\setminus\{i\}, \, U_j\in I^{U_i}_\eps\} \geq n^{\zeta}-1\big)\\
			&\geq1-\sum_{i=1}^n\bbP(\text{Bin}(n-1,\eps)\geq n^{\zeta}-1),
		\end{align*}
		where in the last inequality we used that by definition $|I^{U_i}_\eps|=F^{U_i}(H_\eps^{U_i})\leq \eps$. Now recalling that $\eps=1/n$, we get by Chernov's bound that
		\begin{align}\label{eq:bound-bin}
			\bbP\left( \forall i \in [n], \, \#\{j\in[n], \, U_j\in I^{U_i}_\eps\}<n^{\zeta}\right)
			&\geq1-c_1 n\cdot\min_{\gamma>0} \{\exp(-\gamma n^{\zeta})\bbE[\exp(\gamma \text{Bin}(n-1,1/n))]\} \notag\\
			&=1-c_1 n\cdot\min_{\gamma>0}\{ \exp(-\gamma n^{\zeta}) (1-1/n+\exp(\gamma)/n)^{n-1}\} \notag\\
			&\geq
			1-c_1 n \cdot \exp(-c_2n^{\zeta}),
		\end{align}
		where $c_1,c_2>0$ are two constants.
	\end{proof}

	Note that on the event $\left\{S_n\geq c\cdot n^{\alpha_*(p)}\right\}\cap\left\{\forall i \in [n], \; \#\{j\in[n], \, U_j\in I^{U_i}_\eps\}<n^{\zeta}\right\}$, we can extract a subset $\{U^*_\ell,\ell\in \Lambda\}$ of $\{U^*_\ell,\ell\in[S_n]\}$ such that the corresponding intervals $\{I^{U^*_\ell}_{\eps},\ell\in \Lambda\}$ are disjoint and the size of $\Lambda$ is large, in the sense that $\#\Lambda\geq c \cdot n^{\alpha_*(p)-\zeta}$. Hence, on this event, thanks to \cref{clm:clm1}, we have that $\Perm(\efrak,\sfrak,(U^*_\ell)_{\ell\in\Lambda})$ is an increasing subsequence in $\sigma_n=\Perm(\efrak,\sfrak,(U_i)_{i\leq n})$ of size at least $c\cdot n^{\alpha_*(p)-\zeta}$.
	
	Using \eqref{eq:prestimete} and \cref{clm:clm2}, we deduce that for all $\zeta>0$ there exist two (other) constants $c,q>0$ (which may depend on $\zeta$) such that, 
	\begin{equation}\label{eq:non_zero_proba}
		\bbP\left(\LIS(\sigma_n)\geq  c\cdot n^{\alpha_*(p)-\zeta} \right)\geq q, \qquad \text{for all } n\geq 1.
	\end{equation}
	Note that this estimate holds for all $n\geq 1$ since we are allowed to choose a (possibly smaller) constant $c>0$.
	
	The rest of the proof is devoted to upgrading \eqref{eq:non_zero_proba} by proving that for all $\zeta>0$ there exists (another) constant $c>0$  (which may depend on $\zeta$)  such that 
		\begin{equation}\label{eq:goal-final-part}
			\lim_{n\to \infty}\bbP\left(\LIS(\sigma_n)\geq  c\cdot n^{\alpha_*(p)-\zeta} \right)=1.
		\end{equation}
	To do this, we use a zero-one law type argument.
	Recall the signed excursion $(\efrak,\sfrak,p)$ and the definition of the tagged fragment from \cref{prop: law tagged}. 
	Given a branching height $h>0$, recall that $\sfrak(h)$ is the corresponding sign.
	For $\delta>0$, let
	\begin{equation}\label{eq:defn_H}
	H=H(\delta):=\inf\left\{h>0 \, \middle|\; \sfrak(h)=\oplus,\,\,  \min\{F^{U}(h^-)-F^{U}(h),F^{U}(h)\}\geq \delta,\,\, F^U(h)>1/2\right\},
	\end{equation}
	with the convention that $\inf\emptyset=\infty$. We claim the following.
	
	\smallskip 
	
	\begin{lem}\label{clm:clm3} 
		For every $\eps'>0$, there exists $\delta>0$ such that $\bbP(H<\infty)\geq 1-\eps'$. 
	\end{lem} 
	
	\begin{proof}[Proof of \cref{clm:clm3}]
		Fix $\eps'>0$. 
		It suffices to show that a.s.\ there exists a random $\delta > 0$ such that $H(\delta) < \infty$. Indeed, by the continuity of the probability for increasing families of events, this implies that there is some deterministic $\delta$ such that $\bbP(H(\delta) < \infty) \geq 1 -\eps'$. 
			
			Recall that from \cref{prop: law tagged} we have that $F^{[U]}(h) = \exp(-\xi_{s})$ with $s=\rho(h)$. Since $\xi$ is a subordinator with Levy measure $\Lambda(0,\infty)=\infty$, then $F^U(h)$ does not immediately jump below $1/2$ (by right-continuity of $F^U(h)$), and it has infinitely many downward jumps in every non-trivial interval of times (because $\Lambda(0,\infty)=\infty$; see \textit{e.g.} \cite[Section I.5]{bertoin1996levy}). Hence there are infinitely many $h > 0$ such that $\min\{F^{U}(h^-)-F^{U}(h),F^{U}(h)\} > 0$ and $F^U(h) >1/2$. Since the signs $\sfrak$ at different branching heights are i.i.d., a.s.\ there exists $h >0$ satisfying the conditions of the previous sentence such that $\sfrak(h)=\oplus$. We then take $\delta := \min\{F^{U}(h^-)-F^{U}(h),F^{U}(h)\}$ for the latter choice of $h$.
	\end{proof}

	We now fix $\eps'>0$ and take $\delta>0$ so that $\bbP(H<\infty)\geq 1-\eps'$. When $H<\infty$, by the definition of $H$ in \eqref{eq:defn_H}, the sign of the branching height $H$ is $\sfrak(H)=\oplus$ and $H$ splits $I^U(H)$ into the two intervals $L$ and $R$ with $\min\{|L|,|R|\}=\min\{F^{U}(H^-)-F^{U}(H),F^{U}(H)\}\geq \delta$, and none of them is discarded by  the selection rule $\sfS$ (recall how the selection rule $\sfS$ works from \eqref{eq:selection}). When $H=\infty$, we set $L=R=\emptyset$. With the convention that $\Perm(\efrak,\sfrak,\emptyset)$ is the empty permutation, we set
	\[\sigma^L_n:=\Perm(\efrak,\sfrak,(U_i)_{i\leq n}\cap L)\qquad\text{and}\qquad\sigma^R_n:=\Perm(\efrak,\sfrak,(U_i)_{i\leq n}\cap R),\] 
	and observe that $|\sigma^L_n|=\#\big((U_i)_{i\leq n}\cap L\big)$ and $|\sigma^R_n|=\#\big((U_i)_{i\leq n}\cap R\big)$.  We have the following bound on the size of the two permutations.
	
	\smallskip 
	
	\begin{lem}\label{clm:clm4}
		For all $\delta>0$, there exists two constants $c_1,c_2>0$ (which may depend on $\delta$ but not on $n$) such that 
		\[\bbP\left(\min\{|\sigma^L_n|,|\sigma^R_n|\}\geq \delta n/3\,\middle|\, H<\infty \right)\geq  1-c_1\exp(-c_2n), \qquad \text{for all }n\geq 1.\]
	\end{lem}
	
	\begin{proof}[Proof of \cref{clm:clm4}]
		The proof uses standard binomial concentration arguments. Fix $\delta>0$. 
		We have that
		\begin{multline*}
			\bbP\left(\min\{|\sigma^L_n|,|\sigma^R_n|\}\geq \delta n/3\,\middle|\, H<\infty\right) 
			=
			1-\bbP\left(\min\{|\sigma^L_n|,|\sigma^R_n|\}< \delta n/3\,\middle|\, H<\infty\right)\\ 
			\geq1-\bbP\left(|\sigma^L_n|< \delta n/3\,\middle|\, H<\infty\right)-\bbP\left(|\sigma^R_n|< \delta n/3\,\middle|\, H<\infty\right)
			\geq
			1-2\bbP\left(\text{Bin}(n,\delta)< \delta n/3\right),
		\end{multline*}
		where the first inequality is a union bound, and the last inequality comes from the fact that $\min\{|L|,|R|\}\geq \delta$ when $H<\infty$.
		Standard binomial concentration bounds (as in \eqref{eq:bound-bin}) then provide the existence of two constants $c_1,c_2>0$ such that
			\[
			\bbP\left(\min\{|\sigma^L_n|,|\sigma^R_n|\}\geq \delta n/3\,\middle|\, H<\infty \right) \ge 1-c_1\exp(-c_2n), \quad \text{for all } n\ge 1,
			\]
		which is our claim.
	\end{proof}

	We now conclude the proof of \cref{prop:lwbound}. By standard self-similarity properties of the Brownian excursion $\efrak$, under the conditional law given $(|L|,|R|,|\sigma^L_n|,|\sigma^R_n|)$, the permutations $\sigma^L_n$ and $\sigma^R_n$ have the same law as two independent copies of $\Perm(\efrak,\sfrak,(U_i)_{i\leq n})$ but of size $|\sigma^L_n|$ and $|\sigma^R_n|$ and defined in terms of two independent excursions of duration $|L|$ and $|R|$, respectively.
	Taking $c>0$ as in \eqref{eq:non_zero_proba}, and setting 
		\begin{equation*}
			G := \{H<\infty,\, \min\{|\sigma^L_n|,|\sigma^R_n|\}\geq \delta n/3 \},
		\end{equation*}
		we get that
	\begin{align}\label{eq:cond_bound}
		&\bbP\left(\{\LIS(\sigma^L_n) \geq  c(\delta n/3)^{\alpha_*(p)-\zeta}\}\cup\{\LIS(\sigma^R_n) \geq  c(\delta n/3)^{\alpha_*(p)-\zeta}\} \right)\\
		&= 1-
		\bbE\left[\bbP\left(\LIS(\sigma^L_n) < c(\delta n/3)^{\alpha_*(p)-\zeta}\, \middle| \, |L|,|\sigma^L_n|\right) \cdot \bbP\left(\LIS(\sigma^R_n) < c(\delta n/3)^{\alpha_*(p)-\zeta}\, \middle| \, |R|,|\sigma^R_n|\right)\right]\notag\\
		&\geq
		1-
		\bbE\left[\bbP\left(\LIS(\sigma^L_n) < c  |\sigma^L_n|^{\alpha_*(p)-\zeta}\, \middle| \, |L|,|\sigma^L_n|\right) \cdot \bbP\left(\LIS(\sigma^R_n) < c  |\sigma^R_n|^{\alpha_*(p)-\zeta}\, \middle| \, |R|,|\sigma^R_n|\right)\cdot \mathds{1}_G\right] - \bbP(G^c),\notag
	\end{align}
	where in the last inequality we used that on the event $G$ we have that $\min\{|\sigma^L_n|,|\sigma^R_n|\}\geq \delta n/3$.
	Now, observe that by Brownian scaling,
	\begin{equation*}
		\left(\sigma_n^L\,\middle|\,|L|,|\sigma^L_n|\right)\stackrel{d}{=}\left(\Perm\left(\efrak,\sfrak,(\overline U_i)_{i\leq |\sigma^L_n|}\right)\,\middle|\,  |\sigma^L_n|\right),
	\end{equation*}
	where $(\overline U_i)_{i}$ is a new sequence of i.i.d.\ uniform random variables in $(0,1)$, independent of all other quantities.
	Recalling that $\sigma_n=\Perm(\efrak,\sfrak,(U_i)_{i\leq n})$, we get that almost surely, for all $n\geq 1$, 
	\begin{equation*}
		\bbP\left(\LIS(\sigma^L_n) < c \cdot |\sigma^L_n|^{\alpha_*(p)-\zeta}\, \middle| \, |L|,|\sigma^L_n|\right)   \cdot \mathds{1}_G =\bbP\Big(\LIS(\sigma_{|\sigma^L_n|}) < c\cdot {|\sigma^L_n|}^{\alpha_*(p)-\zeta}\, \Big| \, |\sigma^L_n|\Big)  \cdot \mathds{1}_G \leq 1-q, 
	\end{equation*}
	where the last inequality follows from the bound in \eqref{eq:non_zero_proba} (recall that we chose the same constant $c>0$ as in \eqref{eq:non_zero_proba}).
	Noting that the same argument holds with $R$ instead of $L$, we conclude from \eqref{eq:cond_bound} that, for all $n\geq 1$,
	\begin{equation*}
		\bbP\left(\{\LIS(\sigma^L_n) \geq  c \cdot(\delta n/3)^{\alpha_*(p)-\zeta}\}\cup\{\LIS(\sigma^R_n) \geq  c\cdot(\delta n/3)^{\alpha_*(p)-\zeta}\}\right)
		\geq 1-(1-q)^2-\bbP(G^c).
	\end{equation*}
	Claims \ref{clm:clm3} and \ref{clm:clm4} imply that, for each large enough $n$,
	\begin{equation*}
		\bbP\left(\{\LIS(\sigma^L_n) \geq  c\cdot(\delta n/3)^{\alpha_*(p)-\zeta}\}\cup\{\LIS(\sigma^R_n) \geq  c\cdot(\delta n/3)^{\alpha_*(p)-\zeta}\}\right)
		\geq 1-(1-q)^2-2\eps'.
	\end{equation*}
	Since if $\LIS(\sigma^L_n) \geq  c(\delta n/3)^{\alpha_*(p)-\zeta}$ or $\LIS(\sigma^R_n) \geq  c(\delta n/3)^{\alpha_*(p)-\zeta}$ then $\LIS(\sigma_n) \geq  c(\delta n/3)^{\alpha_*(p)-\zeta}$, the last estimate implies that for each large enough $n$,
	\begin{equation*}
		\bbP\left(\LIS(\sigma_n) \geq  c\cdot(\delta/3)^{\alpha_*(p)-\zeta} \cdot n^{\alpha_*(p)-\zeta}\right)\geq 1-(1-q)^2-2\eps',
	\end{equation*}
	where we recall that $\eps'$ and $\delta$ were fixed right after the statement of \cref{clm:clm3}.
	By possibly choosing a smaller constant $c$, we deduce that for each $\eps'>0$ there exists a constant $c(\eps')>0$ such that
	\begin{equation}\label{eq:new_bound}
		\bbP\left(\LIS(\sigma_n) \geq  c(\eps') \cdot n^{\alpha_*(p)-\zeta}\right)\geq 1-(1-q)^2-2\eps', \qquad\text{for all } n\geq 1. 
	\end{equation}
	Now, set 
	\[q^*:=\sup\left\{r\in[0,1]\,\middle|\,\text{there exists }c(r)>0 \text{ s.t. }\bbP(\LIS(\sigma_n)\geq c(r)\cdot n^{\alpha_*(p)-\zeta})\geq r\text{ for all } n\geq 1\right\},\]  
	and note that $q^*>0$ thanks to \eqref{eq:non_zero_proba}.  The two bounds in \eqref{eq:non_zero_proba} and \eqref{eq:new_bound} and our definition of $q^*$ show that if $q < q^*$, then  $1-(1-q)^2\leq q^*$. Taking $q\to q^*$, we get $1-q^* \le (1-q^*)^2$. Noting that the only strictly positive solution of the latter inequality is $q^*=1$, we conclude the proof of \eqref{eq:goal-final-part}. 
	
	Since \eqref{eq:goal-final-part} holds for any arbitrary $\zeta>0$, we deduce the convergence in \eqref{eq:conv-prob-inf}.
\end{proof}

\section{Estimates for the upper bound}
\label{sec-sec3}

In this section we prove two fundamental estimates (see Propositions \ref{prop:first step2} and \ref{prop:second step2} below) that will be later used in \cref{sect:up_bound_seq_perm} to upper bound the length of the longest increasing subsequence in permutations sampled from the Brownian separable permutons. 

\subsection{The two main estimates}
\label{sec:strategy_upper_bound}

Recall from \cref{sec: strategy lower bound} that $\mcl B_{\ominus}$ and $\mcl B_{\oplus}$ stand for the collections of negative and positive branching heights in the signed excursion $(\efrak, \sfrak, p)$ respectively, \textit{i.e.}\ the collection of heights corresponding to the local minima of $\efrak$ decorated by a $\ominus$--sign and a $\oplus$--sign respectively. Moreover $\mcl B=\mcl B_{\oplus}\cup\mcl B_{\ominus}$. 
Recall also that $I^{t}(h)$, $h\ge 0$, denotes the open interval which contains $t$ in the interval fragmentation $\Ffrak_0(h)$ introduced in \eqref{eq: def Ffrak}. We set $I^{t}(h^-)$ to be equal to the interior of $\bigcap_{\ell<h}I^{t}(\ell)$.

Of particular importance will be the case of negative branching heights $\mcl B_{\ominus}$, for which we introduce the following additional notation. Since $\efrak$ has almost surely distinct local minima (\cref{lem:prop_support_perm}), almost surely, for all $b\in \mcl B_{\ominus}$, there is a unique time $t_b\in[0,1]$ when $\efrak$ has a local minimum with $b=\efrak(t_b)$. 
We define
\begin{equation}\label{eq:notation_l_r}
 (\ell_b,r_b) := I^{t_b}(b^-) ,\qquad L_b:=(\ell_b,t_b), \qquad \text{and} \qquad R_b:=(t_b,r_b) ,
\end{equation}   
\textit{i.e.}\ $L_b$ and $R_b$ are the two intervals in which the interval $I^{t_b}(b^-)$ is split at the branching height $b$. Finally, given $t\in[0,1]$, we also denote by $\mcl B^t_{\ominus}$ the collection of negative branching heights $b\in\mcl B_{\ominus}$ such that $t\in(\ell_b,r_b)$. See \cref{fig:excursion-split2} for an illustration.

\begin{figure}[ht!]
	\begin{center}
		\includegraphics[width=.8\textwidth]{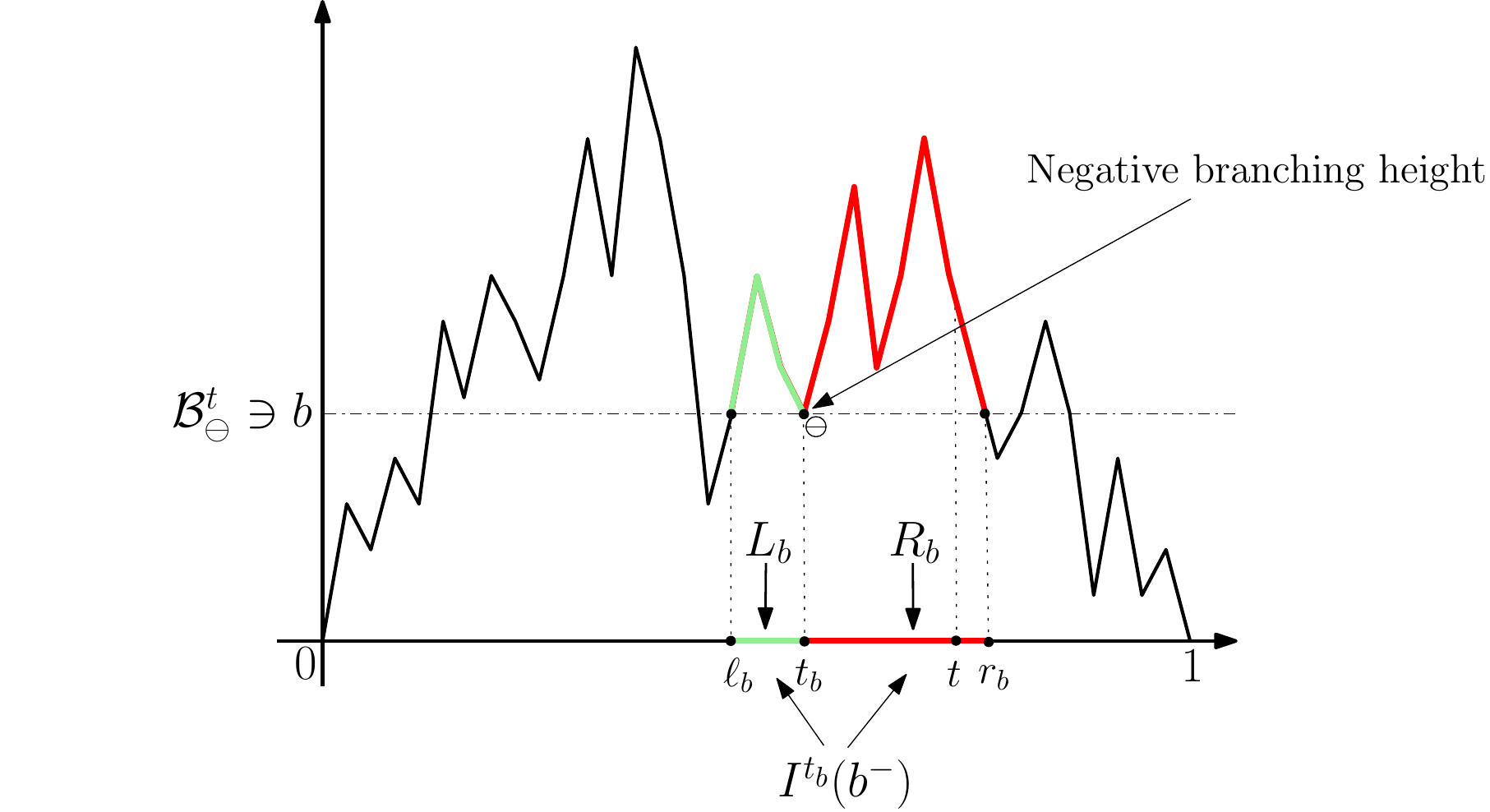}  
		\caption{\label{fig:excursion-split2} A sketch for the notation introduced in \cref{sec:strategy_upper_bound}.}
	\end{center}
	\vspace{-3ex}
\end{figure}

We now introduce a particular type of sequences; the motivation will be clarified right after their definition. Fix $p\in(0,1)$, \textit{i.e.}\ fix the parameter for the signs of the signed excursion $(\efrak, \sfrak,p)$. Conditioning on $(\efrak, \sfrak)$, let $\mathcal{D}$ (for \emph{discarding rules}, see explanations below) be the set of sequences $\eta=\{\eta_b\}_{b\in \mcl B_{\ominus}}$, with $\eta_b\in \{L_{b},R_{b},\diamond\}$, such that $\eta_b=\diamond$ if and only if there exists $b'\in \mcl B_{\ominus}$ with $b'< b$ such that $t_b\in \eta_{b'}$. 

Note that given $(\efrak, \sfrak)$, we can think of $\eta=\{\eta_b\}_{b\in \mcl B_{\ominus}}$
as a deterministic \textbf{discarding rule}, where each $\eta_{b}$ determines which side of the interval $(\ell_b,r_b)$ we are going to discard (the case $\eta_b=\diamond$ corresponds to the case when the interval $(\ell_b,r_b)$ is contained in an interval that was already discarded at some smaller negative branching height).

We introduce some notation. Given $t\in[0,1]$ and $b\in \mcl B^t_{\ominus}$, we say that \textbf{$\eta_b$ does not discard $t$} if $t \notin \eta_b$. 
We want to estimate the probability of the following events defined for all $t\in(0,1)$ and $\eps\in(0,1)$,
\begin{equation}\label{eq:event_key}
	E_{\eps}^{t}=E_{\eps}^{t}(\eta):=\left\{\eta_b\text{ does not discard } t \text{ for all } b\in\mcl B^t_{\ominus} \text{ such that } F^t(b)\geq \eps \right\}. 
\end{equation}
More precisely, we are interested in the case when $t$ is a uniform point $U$ in $(0,1)$ independent of all other random quantities. The ideas used in this section to estimate the probability of the above event will then play a key role in \cref{sect:up_bound_seq_perm}.

Recall the subordinator $\xi$ introduced in \cref{prop: law tagged}. In this section we use the decomposition $\xi=\xi^{\oplus}+\xi^{\ominus}$, where $\xi^{\oplus}$ (resp.\ $\xi^{\ominus}$) is the process determined by the jumps of $\xi$ corresponding to the local minima of $\efrak$ decorated by $\oplus$--signs (resp.\ $\ominus$--signs).  
Then, arguing as in \eqref{eq: Lambda^S}, the process $\xi^{\ominus}$ is a Lévy process with intensity measure on $(0,+\infty)$
\begin{equation} \label{eq: sec 4 Lambda^-}
	\Lambda^\ominus(\mathrm{d}x) 
	:= (1-p)\Lambda(\mathrm{d}x)
	=  \frac{2 (1-p)\,\mathrm{e}^x\mathrm{d}x}{\sqrt{2\pi(\mathrm{e}^x-1)^3}}.
\end{equation}
Now recall from \cref{rem:expr-bounds} the definition of $\lambda^*(p)=1-\beta^*(p)$ as 
\begin{equation} \label{eq: sec 4 def lambda*}
	\lambda^*(p)=\sup_{\beta\in(0,\log(2)),\delta>0}\min\left\{\beta\delta\,,\,\sup_{\gamma<0}\{\gamma \delta + \kappa^{*}_{\gamma,\mathrm{e}^{-\beta}}(p)\}\right\},
\end{equation}
and note that for all $\gamma<0$ and all $r\in(1/2,1)$, by simple calculations, the equation \eqref{eq:expr_kappa} satisfied by $\kappa^{*}_{\gamma,r}(p)$ can be rephrased more conveniently as
\begin{equation} \label{eq: sec 4 def kappa}
	\Phi(-\kappa^{*}_{\gamma,r}(p))=-(1-\mathrm{e}^{\gamma})\Lambda^\ominus\big(-\log(r),-\log(1-r)\big),
\end{equation}
where we recall from \cref{rem:lap_exp_expl} that $\Phi(q) = 2\sqrt{2} \frac{\Gamma(q+1/2)}{\Gamma(q)}$. Note that the above equation has a unique positive solution because as explained in \cref{rem:lap_exp_expl}, the function $\Phi(\cdot)$ is increasing, $\Phi(0)=0$, $\lim_{q\to (-1/2)^+}\Phi(q) = -\infty$ and $-(1-\mathrm{e}^{\gamma})\Lambda^\ominus\big(-\log(r),-\log(1-r)\big)<0$ for all $\gamma<0$ and all $r\in(1/2,1)$.

\medskip

We are going to prove (after stating and proving two additional complementary lemmas) the following upper bound for the probability of the event $E_{\eps}^{U}$ introduced in \eqref{eq:event_key}.

\begin{prop}\label{prop:est_up_bound_lis(q)}
	Fix $p\in(0,1)$. Let $\eta\in \mathcal{D}$ be a discarding rule chosen in a $(\efrak, \sfrak )$--measurable manner. Then,  
	\begin{equation*}
		\bbP(E_{\eps}^{U})\leq 2\, \eps^{\lambda^*(p)}, \quad\text{for all } \eps\in(0,1).
	\end{equation*}	
\end{prop}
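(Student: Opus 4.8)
The plan is to bound $\bbP(E_\eps^U)$ by decomposing the exploration of the tagged fragment $F^U$ according to the \emph{balanced} negative branching heights along its trajectory, as outlined in \cref{sec: strategy upper bound}. Fix $\beta\in(0,\log 2)$ and $\delta>0$ and set $r=\mathrm{e}^{-\beta}\in(1/2,1)$. Call a negative branching height $b\in\mcl B_\ominus^U$ \emph{balanced at scale $r$} if $\max\{t_b-\ell_b,r_b-t_b\}\le r(r_b-\ell_b)$, equivalently (after the Lamperti time-change of \cref{prop: law tagged}) if the corresponding jump $\Delta\xi_{\rho(b)}$ of $\xi^\ominus$ lies in the interval $(-\log r,-\log(1-r))=(\beta,-\log(1-\mathrm{e}^{-\beta}))$. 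The key point is that at a balanced branching height the discarding rule $\eta_b$ removes an interval whose length is at least an $(1-r)$-fraction of the current fragment, so if $U$ is not discarded it must avoid the discarded side; conditionally on the excursion this happens with probability at most $r$ for the length ratio, but since $\eta$ is $(\efrak,\sfrak)$-measurable and $U$ is uniform in the current fragment, the probability that $U$ lands on the surviving side at the $m$-th balanced branching height is at most $r^m$ — this is \cref{prop:first step2}. On the other hand, by \cref{prop:second step2}, after $m$ balanced branching heights the fragment $F^U$ has, with the relevant probability weighting, length typically much smaller than any fixed $\eps$; more precisely the contribution of the event that $F^U$ is still $\ge\eps$ at the $m$-th balanced height decays like $\eps^{\gamma\delta+\kappa^*_{\gamma,r}(p)}$ after optimizing over the exponential-tilting parameter $\gamma<0$, where $\kappa^*_{\gamma,r}(p)$ solves \eqref{eq: sec 4 def kappa}.

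The argument would proceed as follows. First, set $m=\lceil \delta\log(1/\eps)\rceil$ (or a constant times this), so that $r^m\le \eps^{\beta\delta}$ by the choice $r=\mathrm{e}^{-\beta}$. Split $E_\eps^U$ into two cases according to whether the $m$-th balanced negative branching height along $F^U$ occurs before or after $F^U$ drops below $\eps$. On the event that it occurs before, $E_\eps^U$ forces $U$ to survive all $m$ balanced discardings, so by \cref{prop:first step2} this has probability at most $r^m\le\eps^{\beta\delta}$. On the complementary event, $F^U$ reaches level $\eps$ having seen fewer than $m$ balanced branching heights; here one uses \cref{prop:second step2}, whose proof rests on an exponential-martingale/optional-stopping computation for the subordinator $\xi^\ominus$ restricted to its ``balanced'' jumps — the jump rate of balanced jumps is $\Lambda^\ominus(-\log r,-\log(1-r))$, which is exactly the quantity entering \eqref{eq: sec 4 def kappa} — giving a bound of the form $\eps^{\sup_{\gamma<0}\{\gamma\delta+\kappa^*_{\gamma,r}(p)\}}$. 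Combining, $\bbP(E_\eps^U)\le \eps^{\beta\delta}+\eps^{\sup_{\gamma<0}\{\gamma\delta+\kappa^*_{\gamma,r}(p)\}}\le 2\eps^{\min\{\beta\delta,\,\sup_{\gamma<0}\{\gamma\delta+\kappa^*_{\gamma,r}(p)\}\}}$ for each fixed $\beta,\delta$. Finally, since the left-hand side does not depend on $\beta$ or $\delta$, one optimizes: taking the supremum over $\beta\in(0,\log 2)$ and $\delta>0$ of the exponent recovers exactly $\lambda^*(p)$ as defined in \eqref{eq: sec 4 def lambda*}, yielding $\bbP(E_\eps^U)\le 2\eps^{\lambda^*(p)}$.

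A couple of technical points need care. One must check that being ``balanced'' at scale $r$ depends only on $\efrak$ (not on the signs or on the discarding rule), so that the events controlling the ratio at successive balanced heights are, after conditioning on $\efrak$ and on which heights are negative, genuinely multiplicative against the uniform law of $U$; this is where the $(\efrak,\sfrak)$-measurability hypothesis on $\eta$ is essential, since it prevents $\eta$ from ``peeking'' at $U$. One also needs the self-similarity of the interval fragmentation (the scaling relation for $\Ffrak_0$ recalled after \eqref{eq: def Ffrak}) to pass between the fragment at a balanced branching height and a fresh rescaled excursion, so that \cref{prop:second step2} can be iterated or applied at the stopping height in question.

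I expect the main obstacle to be \cref{prop:second step2}: controlling, with the correct exponent $\sup_{\gamma<0}\{\gamma\delta+\kappa^*_{\gamma,r}(p)\}$, the probability that the tagged fragment is still at least $\eps$ after only $m\approx\delta\log(1/\eps)$ balanced negative branching heights. This is delicate because it simultaneously tracks two quantities along $\xi^\ominus$ — the running value of $\xi$ (governing the fragment length, hence the $\eps$-threshold via $F^U(h)=\mathrm{e}^{-\xi_{\rho(h)}}$) and the count of balanced jumps (governing $m$) — so the natural device is a two-parameter exponential tilt of $\xi^\ominus$ that reweights both the position and the number of balanced jumps, after which a renewal/large-deviations estimate (in the spirit of \cref{lem: asymptotics subordinator}, but for the counting functional) gives the claimed power of $\eps$ with the Legendre-type optimization over $\gamma$. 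Getting the constant $2$ (rather than some larger constant) just requires keeping the two terms in the split additive and bounding each by $\eps$ to the min of the two exponents.
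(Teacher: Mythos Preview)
Your proposal is correct and follows essentially the same approach as the paper: the inclusion $E_\eps^U \subseteq (A_m^U\cap\{F^U(\tau_m^U)\ge\eps\})\cup\{F^U(\tau_m^U)<\eps\}$, the bound $\bbP(A_m^U)\le r^m$ from \cref{prop:first step2}, the bound on $\bbP(F^U(\tau_m^U)<\eps)$ from \cref{prop:second step2}, and the substitution $r=\mathrm{e}^{-\beta}$, $m=\delta\log(1/\eps)$ followed by optimization over $(\beta,\delta)$ are exactly the paper's argument. One small slip: in your first and last paragraphs you describe \cref{prop:second step2} as bounding the probability that the tagged fragment is ``still $\ge\eps$'' after $m$ balanced heights, whereas it actually bounds $\bbP(F^U(\tau_m^U)<\eps)$ (as you correctly write in your second paragraph); also, the paper proves \cref{prop:second step2} via a Chernoff bound and the exponential formula for $\xi$ rather than via self-similarity.
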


The following two results complement the bound obtained in the above proposition. 

\begin{lem}\label{lem:param_is pos}
	For all $p\in(0,1)$, we have that $\lambda^*(p)>0$.
\end{lem}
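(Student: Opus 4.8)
The plan is to exhibit an explicit choice of the parameters $\beta\in(0,\log 2)$, $\delta>0$ and $\gamma<0$ in the formula \eqref{eq: sec 4 def lambda*} for which the expression inside the outer supremum is strictly positive. Since $\beta\delta>0$ automatically whenever $\beta,\delta>0$, it is enough to arrange that $\gamma\delta+\kappa^{*}_{\gamma,e^{-\beta}}(p)>0$ for some admissible $\gamma<0$; then
\[
\lambda^*(p)\ \ge\ \min\bigl\{\beta\delta,\ \gamma\delta+\kappa^{*}_{\gamma,e^{-\beta}}(p)\bigr\}\ >\ 0 .
\]
So the whole problem reduces to understanding the size of $\kappa^{*}_{\gamma,r}(p)$ for $\gamma$ close to $0^-$.

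First I would fix $\beta\in(0,\log 2)$ arbitrarily (say $\beta=\tfrac12\log 2$) and set $r:=e^{-\beta}\in(1/2,1)$, so that $-\log r=\beta$ and $-\log(1-r)=-\log(1-e^{-\beta})>\log 2>\beta$. Then
\[
M:=\Lambda^{\ominus}\bigl(\beta,\,-\log(1-e^{-\beta})\bigr)\in(0,\infty),
\]
since $\Lambda^{\ominus}=(1-p)\Lambda$ and $\Lambda$ assigns positive finite mass to any interval bounded away from $0$. Next I would read off the asymptotics of $\kappa^{*}_{\gamma,r}(p)$ as $\gamma\to 0^-$ from its defining relation \eqref{eq: sec 4 def kappa}, which here reads $\Phi\bigl(-\kappa^{*}_{\gamma,r}(p)\bigr)=-(1-e^{\gamma})M$. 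Recall from \cref{rem:lap_exp_expl} that $\Phi$ is continuous and strictly increasing on $(-1/2,+\infty)$ with $\Phi(0)=0$, $\Phi(q)\to-\infty$ as $q\to(-1/2)^+$ and $\Phi(q)\to+\infty$ as $q\to+\infty$; moreover, differentiating the Lévy–Khintchine representation \eqref{eq:def_phi} shows that $\Phi$ is smooth near $0$ with $\Phi'(0)=\int_0^\infty x\,\Lambda(\mathrm dx)\in(0,\infty)$ (the integral converges because $\Lambda(\mathrm dx)\asymp x^{-3/2}\,\mathrm dx$ near $0$ and $\Lambda(\mathrm dx)\asymp e^{-x/2}\,\mathrm dx$ near $+\infty$; in fact $\Phi'(0)=2\sqrt{2\pi}$ by the formula $\Phi(q)=2\sqrt2\,\Gamma(q+1/2)/\Gamma(q)$). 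As $\gamma\to 0^-$, the right-hand side $-(1-e^{\gamma})M$ tends to $0$ from below, so the unique solution satisfies $\kappa^{*}_{\gamma,r}(p)\to 0^+$; combining the Taylor expansion $\Phi(-\kappa)=-\Phi'(0)\,\kappa+O(\kappa^2)$ with $1-e^{\gamma}=|\gamma|+O(\gamma^2)$ gives
\[
\kappa^{*}_{\gamma,r}(p)=\frac{M}{\Phi'(0)}\,|\gamma|\,\bigl(1+o(1)\bigr),\qquad \gamma\to 0^-,
\]
and in particular $\kappa^{*}_{\gamma,r}(p)/|\gamma|\to C:=M/\Phi'(0)>0$.

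To conclude, I would set $\delta:=C/2>0$. By the displayed limit there is some $\gamma_0<0$, close enough to $0$, with $\kappa^{*}_{\gamma_0,r}(p)/|\gamma_0|>C/2=\delta$, i.e. $\kappa^{*}_{\gamma_0,r}(p)>\delta|\gamma_0|=-\gamma_0\delta$, hence $\gamma_0\delta+\kappa^{*}_{\gamma_0,e^{-\beta}}(p)>0$. Together with $\beta\delta>0$ this yields, for this choice of $\beta$ and $\delta$,
\[
\min\Bigl\{\beta\delta,\ \sup_{\gamma<0}\bigl(\gamma\delta+\kappa^{*}_{\gamma,e^{-\beta}}(p)\bigr)\Bigr\}\ \ge\ \min\bigl\{\beta\delta,\ \gamma_0\delta+\kappa^{*}_{\gamma_0,e^{-\beta}}(p)\bigr\}\ >\ 0,
\]
and therefore $\lambda^*(p)>0$. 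I do not expect any real obstacle here: the only point needing a little care is the $\gamma\to 0^-$ asymptotics of $\kappa^{*}_{\gamma,r}(p)$, which boils down to the elementary facts that $\Phi$ is $C^1$ in a neighbourhood of $0$ with $\Phi(0)=0$ and $\Phi'(0)\in(0,\infty)$, and that the right-hand side of \eqref{eq: sec 4 def kappa} vanishes linearly in $\gamma$ at $0$.
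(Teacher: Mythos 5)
Your proof is correct, and the overall strategy coincides with the paper's: exhibit $\beta\in(0,\log 2)$ and $\delta>0$ for which both $\beta\delta$ and $\sup_{\gamma<0}\{\gamma\delta+\kappa^*_{\gamma,e^{-\beta}}(p)\}$ are strictly positive. The difference is in how you make the second quantity positive. You compute the linear asymptotics $\kappa^*_{\gamma,r}(p)\sim \frac{M}{\Phi'(0)}|\gamma|$ as $\gamma\to 0^-$ (via the Taylor expansion of $\Phi$ at $0$ and the finiteness of $\Phi'(0)=\int_0^\infty x\,\Lambda(\mathrm dx)=2\sqrt{2\pi}$), fix $\delta$ at half the resulting slope, and then send $\gamma\to 0^-$. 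The paper does the dual and much shorter thing: it fixes an arbitrary $\gamma_0<0$ and $\beta_0\in(0,\log 2)$, observes that $\kappa^*_{\gamma_0,e^{-\beta_0}}(p)>0$ simply because it is by definition the \emph{positive} root of \eqref{eq: sec 4 def kappa}, and then takes $\delta<\kappa^*_{\gamma_0,e^{-\beta_0}}(p)/|\gamma_0|$. Your asymptotic analysis is sound (and incidentally gives quantitative information about how $\kappa^*$ degenerates near $\gamma=0$, which is in the spirit of what the paper later needs in \cref{lem: sup delta-beta attained}), but for the bare statement $\lambda^*(p)>0$ it is more machinery than required; the only fact genuinely needed is $\kappa^*_{\gamma_0,r}(p)>0$ for a single fixed choice of parameters.
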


\begin{proof}[Proof of \cref{lem:param_is pos}]
	Fix $p\in(0,1)$. Recall the definition of $\lambda^*(p)$ from \eqref{eq: sec 4 def lambda*}.
	We show that there exist $\beta\in(0,\log(2))$ and $\delta>0$ such that $\sup_{\gamma<0}\{\gamma \delta + \kappa^{*}_{\gamma,\mathrm{e}^{-\beta}}(p)\}>0$, then the desired result follows. Fix any $\gamma_0<0$ and $\beta_0\in(0,\log(2))$. Note that since  $\kappa^{*}_{\gamma,\mathrm{e}^{-\beta}}(p)$ is defined as the only positive solution to \eqref{eq: sec 4 def kappa} with $r=\mathrm{e}^{-\beta}$, we have that $\kappa^{*}_{\gamma_0,\mathrm{e}^{-\beta_0}}(p)>0$. Therefore there always exists $\delta>0$ small enough such that $\gamma_0 \delta + \kappa^{*}_{\gamma_0,\mathrm{e}^{-\beta_0}}(p)>0$. This concludes the proof.
\end{proof}

\begin{lem}\label{lem: sup delta-beta attained}
The supremum in \eqref{eq: sec 4 def lambda*} is attained, \textit{i.e.}\ there exist $\beta=\beta(p)\in(0,\log (2))$ and $\delta=\delta(p)>0$ such that 
\[
\lambda^*(p)
=
\min\left\{\beta\delta\,,\,\sup_{\gamma<0}\{\gamma \delta + \kappa^{*}_{\gamma,\mathrm{e}^{-\beta}}(p)\}\right\}.
\]
\end{lem}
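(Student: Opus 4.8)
The plan is a compactness argument. Write $g(\beta,\delta):=\min\{\beta\delta,\,h(\beta,\delta)\}$, where $h(\beta,\delta):=\sup_{\gamma<0}\bigl\{\gamma\delta+\kappa^{*}_{\gamma,\mathrm e^{-\beta}}(p)\bigr\}$, so that $\lambda^*(p)=\sup_{(\beta,\delta)\in D}g(\beta,\delta)$ with $D:=(0,\log 2)\times(0,+\infty)$. I would show that (i) $g$ is continuous on $D$, and (ii) the super-level set $\Omega:=\{(\beta,\delta)\in D:\ g(\beta,\delta)>\lambda^*(p)/2\}$ is contained in a compact rectangle $K\subset D$. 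Since $\lambda^*(p)>0$ by \cref{lem:param_is pos}, we then have $\sup_D g=\sup_K g$, and continuity on the compact set $K$ produces a maximizer, which is the content of the statement.

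First I would record two elementary facts about $\gamma\mapsto\kappa^{*}_{\gamma,r}(p)$. Let $\Phi^{-1}\colon(-\infty,+\infty)\to(-1/2,+\infty)$ denote the inverse of the increasing bijection $\Phi$ (recall from \cref{rem:lap_exp_expl} that $\Phi(0)=0$ and $\Phi(q)\to-\infty$ as $q\to(-1/2)^+$), so that \eqref{eq: sec 4 def kappa} reads $\kappa^{*}_{\gamma,r}(p)=-\Phi^{-1}\bigl(-(1-\mathrm e^{\gamma})\,\Lambda^\ominus(-\log r,-\log(1-r))\bigr)$; in particular $0<\kappa^{*}_{\gamma,r}(p)<1/2$ for all admissible $\gamma,r$. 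Moreover $\Phi$ is analytic near $0$ with $\Phi'(0)>0$ (indeed $1/\Gamma(q)\sim q$ as $q\to 0$, so $\Phi(q)\sim 2\sqrt{2\pi}\,q$), hence $-\Phi^{-1}(-s)/s$ has a finite positive limit as $s\to 0^+$; since also $-\Phi^{-1}(-s)<1/2$ for every $s\ge 0$, the constant $c_2:=\sup_{s>0}\bigl(-\Phi^{-1}(-s)\bigr)/s$ is finite. Combining this with the elementary computation $\Lambda^\ominus(\beta,-\log(1-\mathrm e^{-\beta}))\le\Lambda^\ominus(\beta,+\infty)=\tfrac{4(1-p)}{\sqrt{2\pi(\mathrm e^{\beta}-1)}}\le C_1/\sqrt\beta$ (with $C_1:=4(1-p)/\sqrt{2\pi}$, using $\mathrm e^{\beta}-1\ge\beta$) and the inequality $1-\mathrm e^{\gamma}\le|\gamma|$, I obtain for all $\gamma<0$ and $\beta\in(0,\log 2)$
\[
\kappa^{*}_{\gamma,\mathrm e^{-\beta}}(p)\ \le\ \min\Bigl\{\tfrac12,\ c_2(1-\mathrm e^{\gamma})\tfrac{C_1}{\sqrt\beta}\Bigr\}\ \le\ \min\Bigl\{\tfrac12,\ \tfrac{A|\gamma|}{\sqrt\beta}\Bigr\},\qquad A:=c_2 C_1 .
\]
Optimizing $t\mapsto -t\delta+\min\{1/2,\,At/\sqrt\beta\}$ over $t=|\gamma|>0$ (this function is piecewise linear with a single kink at $t=\sqrt\beta/(2A)$, and $g\ge0$ since the bracket tends to $0$ as $\gamma\to0^-$) then gives the key bound
\[
0\ \le\ h(\beta,\delta)\ \le\ \max\Bigl\{0,\ \tfrac12-\tfrac{\delta\sqrt\beta}{2A}\Bigr\},\qquad\text{hence}\qquad g(\beta,\delta)\ \le\ \min\Bigl\{\beta\delta,\ \max\bigl\{0,\tfrac12-\tfrac{\delta\sqrt\beta}{2A}\bigr\}\Bigr\}.
\]

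Next I would extract the compact set from this bound. On $\Omega$ one has, since $\beta<\log 2$, that $\lambda^*(p)/2<\beta\delta$, so $\delta>\lambda^*(p)/(2\log 2)=:\delta_-$; and $\lambda^*(p)/2<\tfrac12-\delta\sqrt\beta/(2A)$, so $\delta\sqrt\beta<A(1-\lambda^*(p))$. Dividing these two inequalities, $\sqrt\beta=\beta\delta/(\delta\sqrt\beta)>\lambda^*(p)/\bigl(2A(1-\lambda^*(p))\bigr)$, a positive lower bound $\beta>\beta_-$, and then $\delta<A(1-\lambda^*(p))/\sqrt{\beta_-}=:\delta_+<\infty$. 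For the upper bound on $\beta$ I would use $h(\beta,\delta)\le\sup_{\gamma<0}(\gamma\delta)+\sup_{\gamma<0}\kappa^{*}_{\gamma,\mathrm e^{-\beta}}(p)=-\Phi^{-1}\bigl(-\Lambda^\ominus(\beta,-\log(1-\mathrm e^{-\beta}))\bigr)$ (the second supremum being the $\gamma\to-\infty$ limit, since $\kappa^{*}_{\gamma,\mathrm e^{-\beta}}(p)$ increases as $\gamma$ decreases); as $\beta\uparrow\log 2$ the interval $(\beta,-\log(1-\mathrm e^{-\beta}))$ shrinks to the single point $\log 2$, so its $\Lambda^\ominus$–mass tends to $0$ (the Lévy density is continuous, hence bounded, near $\log 2$), and therefore $h(\beta,\delta)\to 0$ uniformly in $\delta$; this forces $\beta<\beta_+$ on $\Omega$ for some $\beta_+<\log 2$. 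Thus $\Omega\subseteq K:=[\beta_-,\beta_+]\times[\delta_-,\delta_+]$, a compact subset of $D$.

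Finally, continuity of $g$ on $D$: the term $\beta\delta$ is continuous, and $h(\beta,\delta)=\sup_{\gamma<0}\{\gamma\delta+\kappa^{*}_{\gamma,\mathrm e^{-\beta}}(p)\}$ is continuous because, on a small compact neighborhood of any point of $D$ where $\delta$ stays bounded below, the inequality $\gamma\delta+\kappa^{*}_{\gamma,\mathrm e^{-\beta}}(p)\le\gamma\delta+1/2$ confines the supremum to a fixed compact range of $\gamma$'s, on which $(\gamma,\beta,\delta)\mapsto\gamma\delta+\kappa^{*}_{\gamma,\mathrm e^{-\beta}}(p)$ is jointly continuous (it extends continuously to $\gamma=0$ with value $0$, since $\Phi^{-1}(0)=0$), so $h$ is a maximum of a continuous function over a compact set. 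Since $\sup_D g=\lambda^*(p)>\lambda^*(p)/2\ge\sup_{D\setminus\Omega}g$ and $\Omega\subseteq K$, the continuous function $g$ attains its supremum $\lambda^*(p)$ at some $(\beta,\delta)\in K\subset D$. The only delicate point — the main obstacle — is the blow-up of $\Lambda^\ominus(\beta,-\log(1-\mathrm e^{-\beta}))$ as $\beta\downarrow 0$, which a priori could keep $h(\beta,\delta)$ of order $1$ for arbitrarily small $\beta$; this is precisely what the quantitative estimate $\kappa^{*}_{\gamma,\mathrm e^{-\beta}}(p)\le A|\gamma|/\sqrt\beta$ (and the resulting bound $g(\beta,\delta)\le A\sqrt\beta$ wherever $\delta\sqrt\beta\le A$) is designed to control; everything else is routine continuity and optimization.
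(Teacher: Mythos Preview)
Your proof is correct and follows essentially the same compactness strategy as the paper --- confine the supremum to a compact subset of $D=(0,\log 2)\times(0,\infty)$ and then invoke continuity --- but your execution is more direct and quantitative. The paper takes near-maximizing sequences $(\beta_n,\delta_n)$ and rules out each possible boundary escape separately by contradiction; you instead derive the explicit pointwise bound $\kappa^{*}_{\gamma,\mathrm e^{-\beta}}(p)\le\min\{1/2,\,A|\gamma|/\sqrt\beta\}$ up front and read the compact rectangle $K$ straight off the resulting inequality $g(\beta,\delta)\le\min\{\beta\delta,\max\{0,\tfrac12-\delta\sqrt\beta/(2A)\}\}$. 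Both arguments rest on the same quantitative input, namely that $\Lambda^\ominus(\beta,-\log(1-\mathrm e^{-\beta}))$ is at worst of order $\beta^{-1/2}$ as $\beta\downarrow 0$; the paper uses the asymptotic $\sim c/\sqrt\beta$, you use the uniform bound $\le C_1/\sqrt\beta$, and these are interchangeable for the purpose at hand. One point where your write-up is slightly more complete: you spell out why the objective $g$ is continuous (localizing the inner sup to a compact $\gamma$-interval via $\kappa^*<1/2$ and the continuous extension to $\gamma=0$), whereas the paper simply passes to the limit $\mu(\delta_\infty,\beta_\infty)$ without comment.
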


\begin{proof}
To see this, let 
\[
\mu(\delta,\beta)
:=
\min\left\{\beta\delta\,,\,\sup_{\gamma<0}\{\gamma \delta + \kappa^{*}_{\gamma,\mathrm{e}^{-\beta}}(p)\}\right\},
\quad
\delta>0, \; \beta\in(0,\log(2)),
\]
and $\delta_n>0,\beta_n\in(0,\log 2)$ define two sequences such that $\mu(\delta_n,\beta_n) \to \lambda^*(p)$ as $n\to \infty$. Our argument is to show that we can extract from $(\delta_n, n\in\bbN)$ and $(\beta_n, n\in\bbN)$ converging subsequences with limits in $(0,\infty)$ and $(0,\log(2))$ respectively (note that the extremities of the intervals are excluded).

First of all, $\mu(\delta_n,\beta_n) \le \beta_n \delta_n \le \delta_n\log(2)$, and since $\mu(\delta_n,\beta_n)\to \lambda^*(p)>0$ (see \cref{lem:param_is pos}), it must be the case that $(\delta_n, n\in\bbN)$ is bounded away from $0$. Secondly, it is fairly easy to see that $(\beta_n, n\in\bbN)$ is bounded away from $\log(2)$. In fact, for all $n\in\bbN$, $\mu(\delta_n,\beta_n)\le\sup_{\gamma<0}\{\gamma \delta_n + \kappa^{*}_{\gamma,\mathrm{e}^{-\beta_n}}(p)\} \le \sup_{\gamma<0} \kappa^{*}_{\gamma,\mathrm{e}^{-\beta_n}}(p)$. But note that for all $r\in(1/2,1)$, $\sup_{\gamma<0}\kappa^{*}_{\gamma,r}(p) = \kappa^{*}_{\infty,r}(p)$, where $\kappa^{*}_{\infty,r}(p)$ is the unique positive solution to 
\[
	\Phi(-\kappa^{*}_{\infty,r}(p))=-\Lambda^\ominus\big(-\log(r),-\log(1-r)\big).
\]
It is clear that $\kappa^{*}_{\infty,r}(p)\to 0$ as $r\to 1/2$. Therefore, $\beta_n\to\log(2)$ (up to extraction) would imply $\mu(\delta_n,\beta_n) \le \kappa^{*}_{\infty,\mathrm{e}^{-\beta_n}}(p) \to 0$ as $n\to\infty$. This contradicts $\lambda^*(p)>0$.

Next, we claim that $(\beta_n, n\in\bbN)$ is bounded away from $0$. Suppose that this is not the case; without loss of generality we can assume that $\beta_n\to 0$. Take $n$ large enough so that $\mu(\delta_n,\beta_n) \ge \frac{\lambda^*(p)}{2}$. The bound $\frac{\lambda^*(p)}{2}\le \mu(\delta_n,\beta_n) \le \beta_n \delta_n$ shows that $\delta_n\to\infty$. Then, for all $n$, we pick $\gamma_n<0$ such that, 
\begin{equation}\label{eq: def gamma_n}
\gamma_n \delta_n + \kappa^*_{\gamma_n,\mathrm{e}^{-\beta_n}}(p) \ge \frac12 \sup_{\gamma<0}\{\gamma \delta_n + \kappa^{*}_{\gamma,\mathrm{e}^{-\beta_n}}(p)\}. 	
\end{equation}
This is possible since $\sup_{\gamma<0}\{\gamma \delta_n + \kappa^{*}_{\gamma,\mathrm{e}^{-\beta_n}}(p)\} \ge \mu(\delta_n,\beta_n)\ge \frac{\lambda^*(p)}{2}>0$. We first remark that, since $\kappa^*_{\gamma_n,\mathrm{e}^{-\beta_n}}(p) \le \frac12$ (recall that $\lim_{q\to (-1/2)^+}\Phi(q) = -\infty$),
\begin{equation} \label{eq: lb gammadelta}
\gamma_n\delta_n 
\stackrel{\eqref{eq: def gamma_n}}{\ge}
\frac12 \Big(\sup_{\gamma<0}\{\gamma \delta_n + \kappa^{*}_{\gamma_n,\mathrm{e}^{-\beta_n}}(p)\}-1\Big)
\ge
\frac12 \Big(\frac{\lambda^*(p)}{2}-1\Big),
\end{equation}
and hence $\gamma_n \to 0$ because $\gamma_n<0$, $\delta_n \to \infty$ and $\frac12 \Big(\frac{\lambda^*(p)}{2}-1\Big)\ge-\frac{1}{2}$. Now by definition of $\kappa^*_{\gamma_n,\mathrm{e}^{-\beta_n}}(p)$, 
\[
	\Phi(-\kappa^*_{\gamma_n,\mathrm{e}^{-\beta_n}}(p))
	=
	-(1-\mathrm{e}^{\gamma_n})\Lambda^\ominus\big(\beta_n,-\log(1-\mathrm{e}^{-\beta_n})\big).
\]
A simple integral estimate shows that, as $n\to \infty$, $\Lambda^\ominus\big(\beta_n,-\log(1-\mathrm{e}^{-\beta_n})\big) \sim c/\sqrt{\beta_n}$ for some positive constant $c$, whence $\Phi(-\kappa^*_{\gamma_n,\mathrm{e}^{-\beta_n}}(p)) \sim c\gamma_n /\sqrt{\beta_n}$ as $n\to\infty$. Note that by the inequality $\frac{\lambda^*(p)}{2}\le \mu(\delta_n,\beta_n) \le \beta_n \delta_n$ and the fact that $\gamma_n<0$,
\[
\frac{\gamma_n}{\sqrt{\beta_n}} 
\ge 	
\sqrt{\frac{2}{\lambda^*(p)}} \cdot \gamma_n \sqrt{\delta_n}
\stackrel{\eqref{eq: lb gammadelta}}{\ge} 
\frac12\sqrt{\frac{2}{\lambda^*(p)}} \cdot \frac{\frac{\lambda^*(p)}{2}-1}{\sqrt{\delta_n}}.
\]
Since $\gamma_n<0$, this entails $\gamma_n /\sqrt{\beta_n}\to 0$ and hence $\Phi(-\kappa^*_{\gamma_n,\mathrm{e}^{-\beta_n}}(p)) \to 0$. Therefore $\kappa^*_{\gamma_n,\mathrm{e}^{-\beta_n}}(p)\to 0$. But this is a contradiction: indeed,
\begin{equation} \label{eq: contradiction kappa^*}
	\frac{\lambda^*(p)}{2}
	\le 
	\mu(\delta_n,\beta_n)
	\le
	\sup_{\gamma<0}\{\gamma \delta_n + \kappa^{*}_{\gamma_n,\mathrm{e}^{-\beta_n}}(p)\}
	\stackrel{\eqref{eq: def gamma_n}}{\le} 
	2\gamma_n\delta_n + 2\kappa^*_{\gamma_n,\mathrm{e}^{-\beta_n}}(p)
	\le
	2\kappa^*_{\gamma_n,\mathrm{e}^{-\beta_n}}(p),
\end{equation}
so that we would have $\lambda^*(p)=0$.

Finally, we prove that $(\delta_n, n\in\bbN)$ is upper bounded using a similar argument. We argue by contradiction, assuming that $\delta_n \to \infty$ (this is possible up to extraction). We first use the inequality $\mu(\delta_n,\beta_n) \le \sup_{\gamma<0}\{\gamma \delta_n + \kappa^{*}_{\gamma,\mathrm{e}^{-\beta_n}}(p)\}$ for all $n\in\bbN$, and we consider $\gamma_n<0$ as in \eqref{eq: def gamma_n}. Now the same argument as in \eqref{eq: lb gammadelta} shows that $\gamma_n\to 0$. Since we know from the previous paragraph that $(\beta_n, n\in\bbN)$ is bounded away from $0$, we deduce that, as $n\to\infty$, 
\[
	\Phi(-\kappa^*_{\gamma_n,\mathrm{e}^{-\beta_n}}(p))
	=
	-(1-\mathrm{e}^{\gamma_n})\Lambda^\ominus\big(\beta_n,-\log(1-\mathrm{e}^{-\beta_n})\big) \longrightarrow 0.
\]
Therefore $\kappa^*_{\gamma_n,\mathrm{e}^{-\beta_n}}\to 0$ and we conclude as in \eqref{eq: contradiction kappa^*} that this is impossible.

We have proved that $(\delta_n, n\in\bbN)$ is upper bounded and bounded away from $0$, and that $(\beta_n,n\in\bbN)$ is bounded away both from $\log(2)$ and $0$. We may now conclude the proof. Up to extraction, we can assume that $(\delta_n,n\in\bbN)$ and $(\beta_n,n\in\bbN)$ converge to $\delta_\infty \in(0,\infty)$ and $\beta_\infty\in(0,\log(2))$ respectively. Taking a limit, we finally get $\lambda^*(p) = \underset{n\to\infty}{\lim} \mu(\delta_n,\beta_n) = \mu(\delta_\infty,\beta_\infty)$, which proves our statement.
\end{proof}

We now turn to the proof of \cref{prop:est_up_bound_lis(q)}. We highlight that the two main estimates used in this proof will be also used in \cref{sect:up_bound_seq_perm}.

\begin{proof}[Proof of \cref{prop:est_up_bound_lis(q)}]
	For a constant $r\in(1/2,1)$, we consider the sequence of branching heights defined for all $m>0$ by
	\begin{equation}\label{eq:seq_time}
		\tau_{m}^U=\tau_{m}^U(r):=\text{$m$-th smallest $b\in\mcl B^U_{\ominus}$ such that $\frac{\max\{|L_b|,|R_b|\}}{|L_b|+|R_b|}\leq r$}.
	\end{equation}
	The random variables $\tau_{m}^U$ are a.s.\ well defined for all $m\in\bbN$, because we claim that there are infinitely many branching heights  $b\in\mcl B^U_{\ominus}$ such that $\max\{|L_b|,|R_b|\}\leq r (|L_b|+|R_b|)$. Indeed, we first note that $\max\{|L_b|,|R_b|\}\leq r (|L_b|+|R_b|)$ if and only if $ \max\{\mathrm{e}^{-\xi^\ominus_{s^-}}-\mathrm{e}^{-\xi^\ominus_s}, \mathrm{e}^{-\xi^\ominus_s}\}\leq r \cdot \mathrm{e}^{-\xi^\ominus_{s^-}}$, where $s=\rho(b)$, and that this is equivalent to $\Delta\xi^\ominus_s\in(-\log(r),-\log(1-r))$. Since the number of jumps of the process $\xi^\ominus$ by time $t$ and with sizes in $(-\log(r),-\log(1-r))$ forms a Poisson process with mean $\Lambda^\ominus(-\log(r),-\log(1-r))\cdot t$ (see \textit{e.g.} \cite[Section I.5]{bertoin1996levy}), letting $t\to \infty$ we get the claim.
	
	We also set for all $m>0$
	\begin{equation}\label{eq:itermediate}
		A^U_{m}=A^U_{m}(r):=\{\eta_b\text{ does not discard } U \text{ for all } b\in\mcl B^U_{\ominus} \text{ such that } b\leq\tau^U_{m}\},
	\end{equation}
	and we claim that for all $r\in(1/2,1)$, $m>0$ and $\eps\in(0,1)$, 
	\begin{equation}\label{eq:incl_eve}
		E_{\eps}^{U}\subseteq \left(A^U_m \cap\left\{F^U(\tau_m^U)\geq \eps\right\}\right)\cup \left\{F^U(\tau_m^U)< \eps\right\}.
	\end{equation}
	Indeed, if $E_{\eps}^{U}$ occurs -- \textit{i.e.}\ $\eta_b$ does not discard  $U$  for all  $b\in\mcl B^U_{\ominus}$ such that $F^U(b)\geq \eps$ -- and $F^U(\tau_m^U)\geq \eps$, then $F^U(b)\geq \eps$ for all $b\in\mcl B^U_{\ominus}$ such that $b\leq \tau_m^U$. 
	Hence $\eta_{b}$ must not discard $U$ for all for all $b\in\mcl B^U_{\ominus}$ such that $b\leq \tau_m^U$, which is the event $A^U_m$. 
	
	As a consequence, for all $r\in(1/2,1)$, $m>0$ and $\eps\in(0,1)$, 
	\begin{equation}\label{eq:upper_bnd_proba2}
		\bbP\left(E_{\eps}^{U}\right)
		\leq \bbP\left(A^U_m\right) 
		+\bbP\left( F^U\big(\tau_m^U\big) < \eps\right).
	\end{equation}
	In the next two crucial propositions (whose proofs are postponed to \cref{sect:proof_lemmas}), we upper-bound the two terms on the right-hand side of the last equation. Later, we will choose some specific values for the constants $r\in(1/2,1)$ and $m>0$ which will optimize our upper-bound.

	\begin{prop}\label{prop:first step2}
		For all $r\in(1/2,1)$, $m>0$, 
		\begin{equation*}
			\bbP\left(A^U_m\right) \leq r^m.
		\end{equation*}
	\end{prop}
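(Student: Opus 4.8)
The plan is to exploit the Markovian structure of the tagged fragment at the successive balanced negative branching heights $\tau^U_1, \tau^U_2, \dots$ and to show that, at each such height, the discarding rule $\eta$ fails to discard $U$ with probability at most $r$, independently across the different heights.

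First I would fix $r \in (1/2,1)$ and recall that $\tau^U_m$ is the $m$-th smallest $b \in \mcl B^U_{\ominus}$ whose split satisfies $\max\{|L_b|,|R_b|\} \leq r(|L_b|+|R_b|)$. By the definition of $\mcl D$, the discarding rule value $\eta_{\tau^U_m} \in \{L_{\tau^U_m}, R_{\tau^U_m}\}$ (it cannot be $\diamond$ on the event $A^U_{m-1}$, since no smaller negative branching height has discarded the interval containing $U$). Crucially, $\eta$ is chosen in an $(\efrak,\sfrak)$-measurable way, hence it does \emph{not} depend on the location of $U$ inside the interval $I^{t_{\tau^U_m}}(\tau^U_m{}^-) = (\ell_{\tau^U_m}, r_{\tau^U_m})$. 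Conditionally on $(\efrak,\sfrak)$ and on the event that $U$ lies in this interval, $U$ is uniform there, so the conditional probability that $\eta_{\tau^U_m}$ does not discard $U$ is exactly the relative length of the \emph{kept} side, which is
\[
\frac{|\text{kept side}|}{|L_{\tau^U_m}|+|R_{\tau^U_m}|} \leq \frac{\max\{|L_{\tau^U_m}|,|R_{\tau^U_m}|\}}{|L_{\tau^U_m}|+|R_{\tau^U_m}|} \leq r,
\]
where the last inequality is the defining "balanced at scale $r$" property of $\tau^U_m$.

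The key step is then to iterate this bound. I would prove by induction on $m$ that $\bbP(A^U_m) \leq r^m$. The base case $m=0$ is trivial ($A^U_0$ is the whole space, or one takes $\tau^U_0 = 0$). For the inductive step, observe that on $A^U_{m-1}$ the height $\tau^U_m$ is well-defined and, conditionally on the filtration generated by the fragmentation process together with the signs up to (and including) height $\tau^U_m$, the position of $U$ within the as-yet-undiscarded interval is uniform. By the display above,
\[
\bbP\big(\eta_{\tau^U_m}\text{ does not discard }U \,\big|\, \mcl F_{\tau^U_m}\big) \leq r \quad \text{on } A^U_{m-1},
\]
so that $\bbP(A^U_m) = \bbP(A^U_{m-1} \cap \{\eta_{\tau^U_m}\text{ does not discard }U\}) \leq r\, \bbP(A^U_{m-1}) \leq r^m$, completing the induction. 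The main obstacle — and the point requiring the most care — is making rigorous the claim that, given everything measurable with respect to $(\efrak,\sfrak)$ and hence given $\eta$, and given the nested sequence of undiscarded intervals down to height $\tau^U_m$, the conditional law of $U$ is uniform on the current interval; this is where the independence of $U$ from $(\efrak,\sfrak,\eta)$ and the branching/self-similarity property of the excursion enter. Once this conditional-uniformity statement is set up correctly (e.g.\ by first conditioning on $(\efrak,\sfrak)$ and then using that $\tau^U_m$ and the nesting are determined by which interval contains $U$ at each relevant height), the rest is the elementary iteration above.
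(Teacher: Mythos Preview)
Your sketch is correct and follows essentially the same approach as the paper's proof: both set up an induction via $A^U_m = A^U_{m-1} \cap \{\eta_{\tau^U_m}\text{ does not discard }U\}$ (the paper writes this with an auxiliary event $B^U_m$), argue that on $A^U_{m-1}$ the value $\eta_{\tau^U_m}\ne\diamond$, and bound the conditional probability of non-discarding by $\max\{|L_{\tau^U_m}|,|R_{\tau^U_m}|\}/(|L_{\tau^U_m}|+|R_{\tau^U_m}|)\le r$ using conditional uniformity of $U$. The paper makes the ``main obstacle'' you flag precise by working with the filtration $\mcl F_{m-1}:=\sigma\big(\efrak,\sfrak,(I^U(h))_{h<\tau^U_m},(\tau^U_s)_{s\le m}\big)$, which contains $(\efrak,\sfrak)$ (hence $\eta$), $L_{\tau^U_m}$, $R_{\tau^U_m}$ and $B^U_m$, but \emph{not} $I^U(\tau^U_m)$ itself---exactly the device you suggest in your final paragraph.
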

	
	\begin{prop}\label{prop:second step2}
		For all $r\in(1/2,1)$, $m>0$ and $\eps\in(0,1)$,
		\[\bbP\left(F^U\big(\tau^U_m\big)< \eps\right)\leq \inf_{\gamma<0}\left\{\mathrm{e}^{-\gamma m}\cdot \eps^{\kappa^{*}_{\gamma,r}(p)}\right\},\]
		where $\kappa^{*}_{\gamma,r}(p)$ was defined in \eqref{eq: sec 4 def kappa}.
	\end{prop}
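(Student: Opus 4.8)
The plan is to express the event $\{F^U(\tau^U_m)<\eps\}$ purely in terms of the subordinator $\xi$ of \cref{prop: law tagged} via the Lamperti representation, and then to apply an exponential Chebyshev (Chernoff) bound with exponent $\kappa^*_{\gamma,r}(p)$. The first point is the translation already implicit around \eqref{eq:seq_time}: since $F^U(h)=\exp(-\xi_{\rho(h)})$, a negative branching height $b\in\mcl B^U_\ominus$ is balanced at scale $r$ exactly when the corresponding jump of $\xi^\ominus$ has size in $J_r:=(-\log r,-\log(1-r))$. Let $\xi^{\ominus,J_r}$ be the part of $\xi^\ominus$ consisting of the jumps with size in $J_r$; as $J_r$ is bounded away from $0$, this is a compound Poisson process with jump rate $\lambda_r:=\Lambda^\ominus(J_r)\in(0,\infty)$ and jump law $\Lambda^\ominus(\cdot\cap J_r)/\lambda_r$. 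Hence $\rho(\tau^U_m)$ is the $m$-th jump time $T_m$ of $\xi^{\ominus,J_r}$, so $F^U(\tau^U_m)=\exp(-\xi_{T_m})$, and for every $\kappa\in(0,1/2)$ Markov's inequality gives $\bbP(F^U(\tau^U_m)<\eps)=\bbP(\xi_{T_m}>\log(1/\eps))\le\eps^{\kappa}\,\bbE[\mathrm{e}^{\kappa\xi_{T_m}}]$. We will take $\kappa=\kappa^*_{\gamma,r}(p)$, which lies in $(0,1/2)$ by the discussion below \eqref{eq: sec 4 def kappa}.

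The core of the argument is the computation of $\bbE[\mathrm{e}^{\kappa\xi_{T_m}}]$. Splitting the jumps of $\xi$ by their decoration, and splitting the $\ominus$-jumps according to whether their size lies in $J_r$, produces three mutually independent Lévy processes (a Poisson thinning argument). Writing $\zeta:=\xi-\xi^{\ominus,J_r}$, the process $\zeta$ is a subordinator independent of $\xi^{\ominus,J_r}$, with Laplace exponent $\Phi^\zeta=\Phi-\Phi^{\ominus,J_r}$ where $\Phi^{\ominus,J_r}(q):=\int_{J_r}(1-\mathrm{e}^{-qx})\Lambda^\ominus(\mathrm{d}x)$. Moreover $T_m$ is a sum of $m$ i.i.d.\ $\mathrm{Exp}(\lambda_r)$ variables, measurable with respect to $\xi^{\ominus,J_r}$, and given $\xi^{\ominus,J_r}$ the jump sizes $D_1,\dots,D_m$ of $\xi^{\ominus,J_r}$ are i.i.d.\ with law $\Lambda^\ominus(\cdot\cap J_r)/\lambda_r$. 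Conditioning on $\xi^{\ominus,J_r}$ and using $\xi_{T_m}=\zeta_{T_m}+\sum_{i=1}^m D_i$ then yields
\[
\bbE\!\left[\mathrm{e}^{\kappa\xi_{T_m}}\right]
=\bbE\!\left[\mathrm{e}^{-T_m\Phi^\zeta(-\kappa)}\right]\left(\frac{1}{\lambda_r}\int_{J_r}\mathrm{e}^{\kappa x}\Lambda^\ominus(\mathrm{d}x)\right)^{m}
=\left(\frac{\int_{J_r}\mathrm{e}^{\kappa x}\Lambda^\ominus(\mathrm{d}x)}{\Phi(-\kappa)+\int_{J_r}\mathrm{e}^{\kappa x}\Lambda^\ominus(\mathrm{d}x)}\right)^{m},
\]
where the last equality uses $\lambda_r+\Phi^\zeta(-\kappa)=\Phi(-\kappa)+\int_{J_r}\mathrm{e}^{\kappa x}\Lambda^\ominus(\mathrm{d}x)$, together with the fact (checked next) that this quantity is positive for the relevant $\kappa$, which makes the Gamma/geometric moment generating function finite.

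To finish, we plug in $\kappa=\kappa^*_{\gamma,r}(p)$, for which $\Phi(-\kappa)=-(1-\mathrm{e}^\gamma)\lambda_r$ by \eqref{eq: sec 4 def kappa}. Since $\gamma<0$ and $J_r\subset(0,\infty)$, we have $\mathrm{e}^{\kappa x}\ge1$ on $J_r$, so $N:=\int_{J_r}\mathrm{e}^{\kappa x}\Lambda^\ominus(\mathrm{d}x)\ge\lambda_r$; the denominator above then equals $N-(1-\mathrm{e}^\gamma)\lambda_r\ge\mathrm{e}^\gamma\lambda_r>0$, and the elementary equivalence $\frac{N}{N-(1-\mathrm{e}^\gamma)\lambda_r}\le\mathrm{e}^{-\gamma}\iff N\ge\lambda_r$ shows $\bbE[\mathrm{e}^{\kappa^*_{\gamma,r}(p)\xi_{T_m}}]\le\mathrm{e}^{-\gamma m}$. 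Combined with the Markov bound this gives $\bbP(F^U(\tau^U_m)<\eps)\le\mathrm{e}^{-\gamma m}\eps^{\kappa^*_{\gamma,r}(p)}$, and taking the infimum over $\gamma<0$ proves the proposition. I expect the only genuinely delicate points to be the bookkeeping of independence in the second paragraph (the thinning of $\xi$ into $\zeta$, the arrival times $T_i$, and the marks $D_i$, which must be arranged so that the conditional computation is valid) and checking that $\lambda_r+\Phi^\zeta(-\kappa)>0$ precisely for $\kappa=\kappa^*_{\gamma,r}(p)$; the remaining manipulations are routine.
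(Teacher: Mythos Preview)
Your proof is correct and takes a genuinely different route from the paper's. The paper first passes to the counting variable $\#\mcl J^U_{\eps,r}$ (the number of balanced $\ominus$--jumps before $\xi$ exceeds $\log(1/\eps)$) via the inclusion $\{F^U(\tau^U_m)<\eps\}\subseteq\{\#\mcl J^U_{\eps,r}\le m\}$, applies a Chernoff bound to this count, and then uses the exponential formula for Poisson point processes to reduce $\bbE[e^{\gamma\#\mcl J^U_{\eps,r}}]$ to the Laplace transform of the first passage time $T_{\log(1/\eps)}$ of $\xi$, finally invoking \cref{lem: asymptotics subordinator} (exponential tilting) to extract the power $\eps^{\kappa^*_{\gamma,r}(p)}$. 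Your approach instead writes $F^U(\tau^U_m)=\exp(-\xi_{T_m})$ with $T_m$ the $m$-th jump time of the balanced $\ominus$--piece, applies Markov directly to $\xi_{T_m}$ with exponent $\kappa=\kappa^*_{\gamma,r}(p)$, and evaluates $\bbE[e^{\kappa\xi_{T_m}}]$ explicitly from the independence of the thinned components together with the Gamma moment generating function; the defining relation \eqref{eq: sec 4 def kappa} then collapses the resulting ratio to at most $e^{-\gamma}$ by the one-line algebraic equivalence you wrote. This is more direct and entirely self-contained: it avoids both the inclusion step and any appeal to \cref{lem: asymptotics subordinator}. In fact, your argument is essentially the route alluded to in the remark after the paper's proof (working with $\{F^U(\tau^U_m)<\eps\}$ directly through the Lamperti representation and thinning), except that you stop once the proposition's bound is established rather than optimizing $\kappa$ separately to squeeze out a sharper exponent.
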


	We now conclude the proof of \cref{prop:est_up_bound_lis(q)}. Combining the estimates in the last two propositions with \eqref{eq:upper_bnd_proba2}, we get that for all $r\in(1/2,1)$, $m>0$ and $\eps\in(0,1)$, 
	\begin{equation*}
		\bbP\left(E_{\eps}^{U}\right)\leq r^m +
		\inf_{\gamma<0}\left\{\mathrm{e}^{-\gamma m}\cdot \eps^{\kappa^{*}_{\gamma,r}(p)}\right\}.
	\end{equation*}
	Setting $r=\mathrm{e}^{-\beta}$ with $\beta\in(0,\log(2))$ and $m=\delta\log(1/\eps)$ with $\delta>0$, we deduce that for all $\eps\in(0,1)$
	\begin{equation*}
		\bbP\left(E_{\eps}^{U}\right)\leq \eps^{\beta\delta}+
		\inf_{\gamma<0}\left\{\eps^{\gamma \delta + \kappa^{*}_{\gamma,\mathrm{e}^{-\beta}}(p)}\right\}\leq 2\, \eps^{\min\{\beta\delta\,,\,\sup_{\gamma<0}\{\gamma \delta + \kappa^{*}_{\gamma,\mathrm{e}^{-\beta}}(p)\}\}}.
	\end{equation*} 
	In particular, for all $\eps\in(0,1)$,
	\begin{equation*}
		\bbP\left(E_{\eps}^{U}\right)\leq 2\, \eps^{\lambda^*(p)},
	\end{equation*} 
	where we recall from \eqref{eq: sec 4 def lambda*} that
	\begin{equation*}
	\lambda^*(p)=\sup_{\beta\in(0,\log(2)),\delta>0}\min\left\{\beta\delta\,,\,\sup_{\gamma<0}\{\gamma \delta + \kappa^{*}_{\gamma,\mathrm{e}^{-\beta}}(p)\}\right\}.
	\end{equation*}
	This concludes the proof of \cref{prop:est_up_bound_lis(q)}.
\end{proof}

An immediate consequence of the last proof and Lemmas \ref{lem:param_is pos} and \ref{lem: sup delta-beta attained} is the following result, which contains the key estimates for the results in \cref{sect:up_bound_seq_perm}.

\begin{cor}\label{cor:key_estimates}
	Fix $p\in(0,1)$. Let $\eta\in \mathcal{D}$ be a discarding rule chosen in a $(\efrak, \sfrak )$--measurable manner and $U$ be a uniform random variable in $(0,1)$ independent of all other random quantities. Recall the definitions of $\tau^U_m$ and $A^U_m$ from \eqref{eq:seq_time} and \eqref{eq:itermediate}.
	
	There exist a choice of $r = r(p)\in(1/2,1)$ and $\delta = \delta(p)>0$ such that for all $\eps\in(0,1)$, setting $m = \delta \log(1/\eps)$, we have that
	\begin{equation*}
		\bbP\left(A^U_m\right) 
		\leq 
		2\, \eps^{\lambda^*(p)},
	\end{equation*}
	and that
	\begin{equation*}
		\bbP\left(F^U\big(\tau^U_m\big)< \eps\right) 
		\leq 
		2\, \eps^{\lambda^*(p)},
	\end{equation*}
	where $\lambda^*(p)>0$ is defined as in \cref{eq: sec 4 def lambda*}.
\end{cor}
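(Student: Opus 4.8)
The plan is to read this statement off directly from \cref{prop:first step2}, \cref{prop:second step2} and \cref{lem: sup delta-beta attained}, carrying out the same substitution used in the proof of \cref{prop:est_up_bound_lis(q)} but keeping the two contributions separate rather than summing them. By \cref{lem: sup delta-beta attained} there is a pair $\beta=\beta(p)\in(0,\log 2)$ and $\delta=\delta(p)>0$ with
\[
\lambda^*(p)=\min\Big\{\beta\delta\,,\,\sup_{\gamma<0}\{\gamma\delta+\kappa^{*}_{\gamma,\mathrm{e}^{-\beta}}(p)\}\Big\}.
\]
I would fix this $\delta$ and set $r:=\mathrm{e}^{-\beta}\in(1/2,1)$; note that both choices are legitimate inputs to \cref{prop:first step2,prop:second step2} precisely because \cref{lem: sup delta-beta attained} places the optimal parameters strictly inside $(0,\log 2)\times(0,\infty)$. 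For $\eps\in(0,1)$ take $m:=\delta\log(1/\eps)$.

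For the first estimate, \cref{prop:first step2} gives $\bbP(A^U_m)\le r^m=\mathrm{e}^{-\beta\delta\log(1/\eps)}=\eps^{\beta\delta}$; since $\eps\in(0,1)$ and $\beta\delta\ge\lambda^*(p)$ (being one of the two quantities of which $\lambda^*(p)$ is the minimum), this is $\le\eps^{\lambda^*(p)}\le 2\,\eps^{\lambda^*(p)}$. For the second estimate, \cref{prop:second step2} together with the identity $\mathrm{e}^{-\gamma m}=\eps^{\gamma\delta}$ yields
\[
\bbP\big(F^U(\tau^U_m)<\eps\big)\le\inf_{\gamma<0}\big\{\eps^{\gamma\delta+\kappa^{*}_{\gamma,\mathrm{e}^{-\beta}}(p)}\big\}=\eps^{\sup_{\gamma<0}\{\gamma\delta+\kappa^{*}_{\gamma,\mathrm{e}^{-\beta}}(p)\}},
\]
where the last step uses that $x\mapsto\eps^x$ is decreasing, so the infimum over $\gamma$ becomes a supremum in the exponent. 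Again $\sup_{\gamma<0}\{\gamma\delta+\kappa^{*}_{\gamma,\mathrm{e}^{-\beta}}(p)\}\ge\lambda^*(p)$, so this is $\le\eps^{\lambda^*(p)}\le 2\,\eps^{\lambda^*(p)}$. Finally \cref{lem:param_is pos} records that $\lambda^*(p)>0$, so these are genuine power-law bounds.

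I do not expect any real obstacle here: all the substance is contained in \cref{prop:first step2} and \cref{prop:second step2} (whose proofs are deferred to \cref{sect:proof_lemmas}) and in the compactness argument of \cref{lem: sup delta-beta attained}. The only points that warrant a line of written justification are the elementary identity $\inf_\gamma\eps^{f(\gamma)}=\eps^{\sup_\gamma f(\gamma)}$ for $\eps\in(0,1)$, and the two places where we invoke "$\lambda^*(p)$ is the minimum of these two quantities, hence each of them is $\ge\lambda^*(p)$." Everything else is the bookkeeping of plugging $r=\mathrm{e}^{-\beta}$ and $m=\delta\log(1/\eps)$ into the two propositions.
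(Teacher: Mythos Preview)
Your proposal is correct and is exactly the argument the paper has in mind: the corollary is stated as an immediate consequence of the proof of \cref{prop:est_up_bound_lis(q)} together with \cref{lem:param_is pos} and \cref{lem: sup delta-beta attained}, and what you have written is precisely that argument spelled out, separating the two contributions $r^m=\eps^{\beta\delta}$ and $\inf_{\gamma<0}\eps^{\gamma\delta+\kappa^*_{\gamma,\mathrm{e}^{-\beta}}(p)}$ rather than summing them. Your use of \cref{lem: sup delta-beta attained} to place the optimal $(\beta,\delta)$ strictly inside the admissible domain, and the observation that each of the two quantities in the minimum is at least $\lambda^*(p)$, are exactly the points needed.
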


\subsection{Proofs of the two main estimates}\label{sect:proof_lemmas}

In this section we prove the two propositions that we left behind in the previous section, \textit{i.e.}\ Propositions \ref{prop:first step2} and \ref{prop:second step2}. For convenience, we recall at the beginning of each proof what is the statement that we need to prove.

\begin{proof}[Proof of \cref{prop:first step2}]
	Recall that $\eta\in \mathcal{D}$ is a discarding rule chosen in a $(\efrak, \sfrak )$--measurable manner.
	Also recall from \eqref{eq:itermediate} that for all $m>0$,
	\begin{equation}
		A^U_{m}=A^U_{m}(r):=\{\eta_b\text{ does not discard } U \text{ for all } b\in\mcl B^U_{\ominus} \text{ such that } b\leq\tau^U_{m}\},
	\end{equation}
	where $\tau_m^U=\tau_m^U(r)$ was defined in \eqref{eq:seq_time}.
	We want to prove that for all $r\in(1/2,1)$, $m>0$, 
	\begin{equation}\label{eq:rn_bound}
		\bbP\left(A^U_m\right) \leq  r^m.
	\end{equation}
	Setting 
	\[B^U_{m}=B^U_{m}(r):=\{\eta_b\text{ does not discard } U \text{ for all } b\in\mcl B^U_{\ominus} \text{ such that } b<\tau^U_m\},\]
	(note that the inequality is strict here), we have that
	\begin{equation}\label{eq:simply_event}
		A^U_{m}=B^U_{m}\cap \left\{\eta_{\tau_m^U}\text{ does not discard } U\right\}.
	\end{equation}
	We consider the following filtration, defined for all $m>0$ by
	\[\mcl F_m:=\sigma\left(\efrak, \sfrak, (I^U(h))_{h<\tau^U_{m+1}},(\tau^U_{s})_{s\leq m+1}\right).\] 
	We emphasize that, importantly, the interval $I^U(h)$ at level $h=\tau_{m+1}^U$ is not included in the definition of $\mcl F_{m}$. Fix $m>0$. Note that:
	\begin{itemize}
		\item $B^U_{m}$ is $\mcl F_{m-1}$--measurable.
		\item $I^U((\tau^U_m)^-)$ is $\mcl F_{m-1}$--measurable because by definition it is equal to the interior of the decreasing intersection $\bigcap_{h<\tau^U_m}I^{U}(h)$.
		\item $L_{\tau_m^U}$ and $R_{\tau_m^U}$ are $\mcl F_{m-1}$--measurable because $\tau_m^U$ is  $\mcl F_{m-1}$--measurable and $L_{\tau_m^U}$ and $R_{\tau_m^U}$ are a deterministic function of $\tau_m^U$ and $\efrak$.
		\item $|I^U((\tau^U_m)^-)|=|L_{\tau_m^U}|+|R_{\tau_m^U}|$.
		\item Conditioning on $\mcl F_{m-1}$, $U$ is uniform in $I^U((\tau^U_m)^-)$ because $I^U(h)$ at level $h=\tau_{m}^U$ is not included in the definition of $\mcl F_{m-1}$. 
		\item Conditioning on $\mcl F_{m-1}$, the probability that $U$ falls in $L_{\tau_m^U}$ (resp. $R_{\tau_m^U}$) is therefore $\frac{|L_{\tau_m^U}|}{|L_{\tau_m^U}|+|R_{\tau_m^U}|}$ (resp. $\frac{|R_{\tau_m^U}|}{|L_{\tau_m^U}|+|R_{\tau_m^U}|}$).
	\end{itemize}
	The above observations entail that, almost surely, for all $m>0$, 
	\begin{align}\label{eq: sec 4 uniform re-sampling}
		\mathds{1}_{B^U_{m}}\,\bbE&\left[\mathds{1}_{\{\eta_{\tau_m^U}\text{ does not discard } U\}}\middle|\mcl F_{m-1}\right]\notag\\
		&=
		\mathds{1}_{B^U_{m}}\,\bbP\left(U\text{ is not in the interval discarded by }\eta_{\tau_m^U} \middle|\mcl F_{m-1}\right)\notag\\
		&=
		\mathds{1}_{B^U_{m}}\,\left(\frac{|L_{\tau_m^U}|}{|L_{\tau_m^U}|+|R_{\tau_m^U}|}\delta_{\{\eta_{\tau_m^U}=R_{\tau_m^U}\}}+\frac{|R_{\tau_m^U}|}{|L_{\tau_m^U}|+|R_{\tau_m^U}|}\delta_{\{\eta_{\tau_m^U}=L_{\tau_m^U}\}}\right)\notag\\
		&\leq
		\mathds{1}_{B^U_{m}}\,\frac{\max\{|L_{\tau_m^U}|,|R_{\tau_m^U}\}|}{|L_{\tau_m^U}|+|R_{\tau_m^U}|}
		\leq \mathds{1}_{B^U_{m}}\,r,
	\end{align}
	where the last inequality follows by definition of $\tau_m^U$.
	Note also that in the second equality of the above equation we used that if $\eta_{b}$ does not discard  $U$ for all $b\in\mcl B^U_{\ominus}$ such that $b <\tau_m^U$ (which is the event $B^U_{m}$) then $\eta_{\tau_m^U}\neq \diamond$ by the definition of discarding rule. 
	Therefore, we get that for all $m>0$,
	\begin{multline*}
		\bbP(A^U_m)\stackrel{\eqref{eq:simply_event}}{=}\bbE\left[\mathds{1}_{B^U_m}\mathds{1}_{\{\eta_{\tau_m^U}\text{ does not discard } U\}}\right]
		=
		\bbE\left[\mathds{1}_{B^U_m}\bbE\Big(\mathds{1}_{\{\eta_{\tau_m^U}\text{ does not discard } U\}}\Big|\mathcal F_{m-1}\Big)\right]\\
		\stackrel{\eqref{eq: sec 4 uniform re-sampling}}{\leq}
		r\cdot\bbE\left[\mathds{1}_{B^U_m}\right]
		\leq
		r\cdot\bbE\left[\mathds{1}_{B^U_{m-1}}\mathds{1}_{\{\eta_{\tau_{m-1}^U}\text{ does not discard } U\}}\right]\stackrel{\eqref{eq:simply_event}}{=} r\cdot \bbP(A^U_{m-1}),
	\end{multline*}
	where in the second equality we used that $\mathds{1}_{B_m}$ is $\mcl F_{m-1}$--measurable.
	Iterating the same argument, we retrieve \eqref{eq:rn_bound}. 
\end{proof}

	\begin{proof}[Proof of \cref{prop:second step2}]
		We want to prove that for all $r\in(1/2,1)$, $m>0$ and $\eps\in(0,1)$,
		\[\bbP\left(F^U\big(\tau^U_m\big)< \eps\right)\leq \inf_{\gamma<0}\left\{\mathrm{e}^{-\gamma m}\cdot \eps^{\kappa^{*}_{\gamma,r}(p)}\right\},\]
		where $\kappa^{*}_{\gamma,r}(p)$ was defined in \eqref{eq: sec 4 def kappa}.
		We fix $r\in(1/2,1)$, $m>0$ and $\eps\in(0,1)$. Let 
		\[
		\mcl J^U_{\eps,r}=\left\{b\in \mcl B^U_{\ominus}\;\middle| \; F^U(b^-)\geq \eps \text{ and } \frac{\max\{|L_b|,|R_b|\}}{|L_b|+|R_b|}\leq r\right\}.
		\]
		We start by noticing that $\{F^U(\tau^U_m)< \eps \} \subseteq \{ \#\mcl J^U_{\eps,r}\leq m \}$. By Chernoff bound, for all $\gamma<0$, we have
		\begin{equation} \label{eq: sec 4 Chernoff q2}
			\bbP\left(F^U\big(\tau^U_m\big)< \eps\right)
			\leq
			\bbP\left(\#\mcl J^U_{\eps,r}\leq m \right)
			\le
			\mathrm{e}^{-\gamma m}\bbE\left[ \mathrm{e}^{\gamma \#\mcl J^U_{\eps,r}}\right].
		\end{equation}
		Fix $\gamma<0$. Recall the notation $\Delta\xi^{\ominus}_s=\xi^{\ominus}_s-\xi^{\ominus}_{s^-}$, where the process $\xi^{\ominus}_s$ was introduced above \eqref{eq: sec 4 Lambda^-}.
		We claim that
		\begin{equation}\label{eq:size_set_eq}
			\#\mcl J^U_{\eps,r}=\sum_{s > 0}\mathds{1}_{\xi_{s^-}\leq \log(1/\eps)}\mathds{1}_{\Delta\xi^{\ominus}_s\in(r_0,r_1)},
		\end{equation}
		where $r_0=-\log(r)$ and $r_1=-\log(1-r)$.
		Indeed, note that for all $b\in \mcl B^U_{\ominus}$,  $F^U(b^-)\geq \eps$ if and only if $\xi_{s^-}\leq \log(1/\eps)$, with $s=\rho(b)$.
		Moreover, we have that $\max\{|L_b|,|R_b|\}\leq r (|L_b|+|R_b|)$ if and only if $\Delta\xi^\ominus_s\in(-\log(r),-\log(1-r))$, as already explained below \eqref{eq:seq_time}.
		Hence we proved the equality \eqref{eq:size_set_eq}. The latter implies that 
		\begin{equation} \label{eq: sec3 formula sum2}
			\bbE\left[\mathrm{e}^{\gamma \#\mcl J^U_{\eps,r}}\right]
			=
			\bbE\left[\exp\left\{\gamma\sum_{s > 0}\mathds{1}_{\xi_{s^-}\leq \log(1/\eps)}\mathds{1}_{\Delta\xi^{\ominus}_s\in(r_0,r_1)}\right\}\right].
		\end{equation}
		By definition of $\xi^\ominus$, the law of $\xi^\ominus$ is obtained from the law of $\xi$ by keeping the jumps of $\xi$ according to i.i.d.\ coin tosses with success probability $1-p$. More precisely, if $(\chi_s, s>0)$ denotes a collection of i.i.d.\ coin flips, with $\bbP(\chi=1)=1-p = 1 - \bbP(\chi=0)$, and that is further independent of $\xi$, we have the identity in law
		\begin{equation} \label{eq: sec3 thinning ominus2}
			\sum_{s > 0}\mathds{1}_{\xi_{s^-}\leq \log(1/\eps)}\mathds{1}_{\Delta\xi^{\ominus}_s\in(r_0,r_1)}
			\overset{d}{=}
			\sum_{s > 0}\chi_s\mathds{1}_{\xi_{s^-}\leq \log(1/\eps)}\mathds{1}_{\Delta\xi_s\in(r_0,r_1)}.
		\end{equation}
		We combine equations \eqref{eq: sec3 formula sum2} and \eqref{eq: sec3 thinning ominus2} into
		\[
		\bbE\left[\mathrm{e}^{\gamma \#\mcl J^U_{\eps,r}}\right]
		=
		\bbE\left[\exp\left\{\gamma\sum_{s > 0}\chi_s\mathds{1}_{\xi_{s^-}\leq \log(1/\eps)}\mathds{1}_{\Delta\xi_s\in(r_0,r_1)}\right\}\right].
		\]
		Using the independence of $\chi$ and $\xi$, and the fact that $\chi$ is a collection of i.i.d.\ Bernoulli variables, we obtain
		\begin{align*}
			\bbE\left[\mathrm{e}^{\gamma \#\mcl J^U_{\eps,r}}\right] 
			&=
			\bbE\left[\prod_{s>0}\left(p+(1-p)\exp\left\{\gamma\mathds{1}_{\xi_{s^-}\leq \log(1/\eps)} \mathds{1}_{\Delta\xi_s\in(r_0,r_1)}\right\}\right)\right] \\
			&= \bbE\left[ \exp\left\{\sum_{s>0}\log\left(p+(1-p)\exp\left\{\gamma\mathds{1}_{\xi_{s^-}\leq \log(1/\eps)} \mathds{1}_{\Delta\xi_s\in(r_0,r_1)}\right\}\right)\right\}\right].
		\end{align*}
		Using the exponential formula for $\xi$ (see for instance \cite[Proposition XII.1.12]{revuz2013continuous}), the previous display reduces to
		\begin{align*}
			\bbE\left[\mathrm{e}^{\gamma \#\mcl J^U_{\eps,r}}\right]
			&=
			\bbE\left[\exp\left\{-\int_0^\infty ds\int_0^\infty\Lambda(dx)\left(1-\left(p+(1-p)\exp\left\{\gamma\mathds{1}_{\xi_{s^-}\leq \log(1/\eps)} \mathds{1}_{x\in(r_0,r_1)}\right\}\right)\right)\right\}\right]\\
			&=
			\bbE\left[\exp\left\{- T_{\log(1/\eps)} \int_0^\infty \Lambda(dx)\left(1-p-(1-p)\exp\left\{\gamma \mathds{1}_{x\in(r_0,r_1)}\right\}\right)\right\}\right]\\
			&=\bbE\left[\exp\left\{- T_{\log(1/\eps)} (1-\mathrm{e}^{\gamma}) \Lambda^{\ominus}(r_0,r_1)\right\}\right],
		\end{align*}
		where in the last equality we used that for an indicator $\mathds{1}_{A}$, $1-p-(1-p)\mathrm{e}^{\gamma\mathds{1}_{A}}=(1-p)(1-\mathrm{e}^{\gamma})\mathds{1}_{A}$  and that $\Lambda^\ominus(\mathrm{d}x) = (1-p)\Lambda(\mathrm{d}x)$ from \eqref{eq: sec 4 Lambda^-}.
		Now let $\kappa^{*}_{\gamma,r}(p)$ be defined as in \eqref{eq: sec 4 def kappa}, \textit{i.e.}\ as the only positive solution to the equation $\Phi(-\kappa^{*}_{\gamma,r}(p))=-(1-\mathrm{e}^{\gamma})\Lambda^\ominus(r_0,r_1)$. Using the second claim \eqref{eq: lem asymptotics subordinator upper bound} in \cref{lem: asymptotics subordinator} with $a=(1-\mathrm{e}^{\gamma}) \Lambda^{\ominus}(r_0,r_1)$ and $b=0$, we deduce the upper bound $\bbE\left[\mathrm{e}^{\gamma \#\mcl J^U_{\eps,r}}\right] \le \eps^{\kappa^*_{\gamma,r}(p)}$.
		Going back to our Chernoff estimate in \eqref{eq: sec 4 Chernoff q2}, we deduce that $\bbP\left(F^U\big(\tau^U_m\big)< \eps\right)\leq\mathrm{e}^{-\gamma m} \eps^{\kappa^*_{\gamma,r}(p,q)}$.
		Optimizing over $\gamma<0$ yields the desired claim.
	\end{proof}

\begin{remark}
Our estimate in \cref{prop:second step2} can be slightly improved by dealing directly with the left-hand side of \eqref{eq: sec 4 Chernoff q2}. The idea is to use the Lamperti representation (\cref{prop: law tagged}) to rephrase the event $\{F^U(\tau_m^U) <\eps\}$ in terms of the underlying Lévy process, and then use a thinning argument to select the special jumps corresponding to the times $\tau_i^U$. This would lead to a slightly better (but uglier) upper bound for the exponent for $\LIS(\Perm(\bm\mu_p,n))$.

We did not follow this other route, since we found the current argument slightly cleaner and in any case, both arguments provide bounds which are quite far from the actual behavior of $\LIS(\Perm(\bm\mu_p,n))$.
\end{remark}

\section{Upper bound for sequences sampled from the Brownian separable permutons}\label{sect:up_bound_seq_perm}

The main goal of this section is to prove the upper bound in \cref{thm:upper_lower_perm}, completing the proof of the theorem. Note that in \cref{lem:param_is pos} we proved that $\lambda^*(p)>0$, and so that $\beta^*(p)=1-\lambda^*(p)<1$ for all $p\in(0,1)$. Hence, in order to complete the proof of \cref{thm:upper_lower_perm} it remains to prove the upper bound in the second item in the theorem statement. This is done in the next proposition.

\begin{prop}\label{prop:upbound}
	Fix $p\in(0,1)$ and let $\beta^*(p)=1-\lambda^*(p)$ be as in \cref{rem:expr-bounds}. Let $\sigma_n$ be a random permutation of size $n\in \bbN$ sampled from the Brownian separable permuton $\bm{\mu}_p$. Then for all $\beta>\beta^*(p)$, the following convergence in probability holds
	\begin{equation*}
		\frac{\LIS(\sigma_n)}{n^{\beta}}\to 0.
	\end{equation*}
\end{prop}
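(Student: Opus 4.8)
The plan is to prove the stronger statement $\bbE[\LIS(\sigma_n)]\le n^{\beta^*(p)+o(1)}$ as $n\to\infty$ and then conclude by Markov's inequality: since $\lambda^*(p)>0$ by \cref{lem:param_is pos}, for any $\beta>\beta^*(p)$ this gives $\bbP(\LIS(\sigma_n)\ge n^\beta)\le n^{\beta^*(p)+o(1)-\beta}\to 0$, which yields the desired convergence in probability. Write $\sigma_n=\Perm(\efrak,\sfrak,(U_i)_{i\le n})$ with $(U_i)_{i\le n}$ i.i.d.\ uniform on $(0,1)$, and fix a longest increasing subsequence. The corresponding set $O\subseteq\{U_i:i\le n\}$ of points is ordered with respect to $\vartriangleleft_{\efrak,\sfrak,p}$ and $|O|=\LIS(\sigma_n)$. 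Associate to $O$ the discarding rule $\eta(O)\in\mathcal D$ that, at each negative branching height $b$, discards the side of $(\ell_b,r_b)$ not met by $O$ (an arbitrary $(\efrak,\sfrak)$-consistent choice being made, or $\eta(O)_b=\diamond$ set, when this is forced). Since $O$ is ordered, for each $b\in\mcl B_\ominus$ it meets at most one of $L_b,R_b$ — otherwise two of its points lying on opposite sides of the local minimum $t_b$ would form an inversion, by \cref{lem:prop_support_perm} — so $\eta(O)$ is well defined and never discards a point of $O$; equivalently, in the notation of \eqref{eq:event_key}, the event $E^t_\eps(\eta(O))$ holds for every $t\in O$ and every $\eps>0$.

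Next I would coarse-grain at scale $\eps=1/n$. Call a maximal interval of $\Ffrak_0$ of length $<\eps$ a \emph{scale-$\eps$ fragment}. By the binomial concentration argument from the proof of \cref{clm:clm2}, union-bounded over the at most $n$ scale-$\eps$ fragments that contain at least one of the $U_i$, with probability $1-o(1)$ every scale-$\eps$ fragment contains at most $n^{o(1)}$ of the points $(U_i)$. On this event, writing $\mcl F(O)$ for the set of scale-$\eps$ fragments meeting $O$ and using $O\subseteq\{U_i\}$,
\[
\LIS(\sigma_n)=|O|=\sum_{J\in\mcl F(O)}|O\cap J|\le n^{o(1)}\cdot\#\mcl F(O).
\]
Each $J\in\mcl F(O)$ is kept by $\eta(O)$ (it contains a point of $O$, which is never discarded) and contains a sample point, so it remains to show $\bbE[\#\mcl F(O)]\le n^{\beta^*(p)+o(1)}$.

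This last bound is the crux, and the main obstacle is that $\eta(O)$ is \emph{not} chosen in a $(\efrak,\sfrak)$-measurable way — it depends on the whole sample through the longest increasing subsequence — so \cref{prop:est_up_bound_lis(q)} and \cref{cor:key_estimates} do not apply to it directly. I would reduce $\#\mcl F(O)$ to a quantity these estimates control. One route: bound $\#\mcl F(O)$ by the number $N^{\mathrm{gr}}_\eps$ of scale-$\eps$ fragments kept by the $(\efrak,\sfrak)$-measurable \emph{greedy} rule that at each negative branching keeps the side containing more scale-$\eps$ fragments (this rule maximises, over all of $\mathcal D$, the number of kept fragments, by an induction from the leaves up), and then bound $N^{\mathrm{gr}}_\eps$ in probability by $\eps^{-\beta^*(p)-o(1)}$, using the tagged-fragment identity $N^{\mathrm{gr}}_\eps=\bbE_U[\mathds 1_{E^U_\eps(\eta^{\mathrm{gr}})}/F^U(H^U_\eps)]$ together with the inclusion $E^U_\eps\subseteq(A^U_m\cap\{F^U(\tau^U_m)\ge\eps\})\cup\{F^U(\tau^U_m)<\eps\}$ and \cref{cor:key_estimates} with $m=\delta(p)\log(1/\eps)$, $r=r(p)$; the genuinely delicate point here is to control the overshoot of the subordinator $\xi$ of \cref{prop: law tagged} at its first passage above $\log(1/\eps)$, which is only sub-exponentially integrable, so one cannot afford the naive bound $\#\{\text{kept fragments}\}\le\Leb\big(\bigcup\text{kept fragments}\big)/\eps$. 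An alternative route is a leave-one-out device: replace $O$ by a longest increasing subsequence $O^{(i)}$ of the permutation on $\{U_j:j\ne i\}$, so that $\eta(O^{(i)})$ is independent of $U_i$ and \cref{cor:key_estimates} applies to $U_i$ after conditioning on $(U_j)_{j\ne i}$, giving $\sum_i\bbP(A^{U_i}_m(\eta(O^{(i)})))\le 2n\,\eps^{\lambda^*(p)}=2n^{\beta^*(p)}$ for $\eps=1/n$; one must then transfer this back to $\#\mcl F(O)$ by showing that, up to a controlled number of indices, $U_i\in O$ still forces $A^{U_i}_m(\eta(O^{(i)}))$, using $|O^{(i)}|\ge|O|-1$ and the one-sidedness of $O$ at negative branchings. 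Either way, this step mirrors — on the level of the tagged fragment — the conditioning bootstrap used to upgrade the a priori lower bound in the proof of \cref{prop:lwbound}, and I expect it to be the most technical part of the argument.
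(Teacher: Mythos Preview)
You have correctly identified the central difficulty — that the discarding rule $\eta(O)$ built from the longest increasing subsequence is not $(\efrak,\sfrak)$--measurable — and you have isolated the right two estimates from \cref{cor:key_estimates}. But neither of your proposed routes closes the gap, and the paper's argument is in fact simpler than both.

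Your Route~1 (greedy rule) runs into the overshoot problem you flag: the identity $N^{\mathrm{gr}}_\eps=\bbE_U[\mathds 1_{E^U_\eps}/F^U(H^U_\eps)]$ involves $1/F^U(H^U_\eps)=\eps^{-1}\mathrm{e}^{\text{overshoot}}$, and the overshoot of $\xi$ has tail $\sim\mathrm e^{-y/2}$, so $\bbE[\mathrm e^{\text{overshoot}}]=\infty$; this is not just delicate but genuinely obstructs a first-moment bound on $N^{\mathrm{gr}}_\eps$. Your Route~2 (leave-one-out) has the transfer problem you mention: $O^{(i)}$ can be a completely different LIS from $O\setminus\{U_i\}$ (think of two near-maximal increasing subsequences in disjoint branches), in which case $\eta(O^{(i)})$ may well discard $U_i$ at an early balanced height, and there is no evident way to control the number of such indices.

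The paper sidesteps both issues via two observations. First, it stops not at the scale-$\eps$ fragment but at the $m$-th balanced negative branching height $\tau^{U_j}_m$ (with $m=\delta(p)\log(1/\eps)$), and splits the kept intervals $I^j_m:=I^{U_j}(\tau^{U_j}_m)$ according to whether $|I^j_m|<\eps$ or $|I^j_m|\ge\eps$. The count of \emph{small} intervals is handled by the second bound of \cref{cor:key_estimates}, $\bbP(F^U(\tau^U_m)<\eps)\le 2\eps^{\lambda^*(p)}$, which does \emph{not involve the discarding rule at all}, so measurability is a non-issue. Second, for the \emph{large} intervals the paper bounds their total Lebesgue measure rather than their number: introduce a \emph{single fresh} uniform $V$ independent of $(\efrak,\sfrak,(U_j)_{j\le n})$; then, conditionally on $(U_j)_{j\le n}$, the rule $\eta(O)$ becomes $(\efrak,\sfrak)$--measurable and the first bound of \cref{cor:key_estimates} applied to $V$ gives $\bbE[\Leb(\text{kept region at level }m)]=\bbP(A^V_m)\le 2\eps^{\lambda^*(p)}$. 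Markov then gives at most $\eps^{-1+\lambda^*(p)-o(1)}$ large kept intervals. Covering both families by $O(n^{1-\lambda^*(p)+o(1)})$ intervals of length $\eps$ with endpoints in $\eps\mathbb Z$, and then using binomial concentration over this \emph{deterministic grid} (rather than over random fragments), yields $\LIS(\sigma_n)\le n^{\beta^*(p)+o(1)}$ with high probability. The fresh-point trick is the missing idea: it converts the count you want into a Lebesgue measure, for which independence from the rule is recovered trivially.
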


\begin{proof}
	
	We split the proof in six steps. In what follows, \emph{w.h.p.}\ means \emph{with probability tending to one when $n\to\infty$}.

	\medskip
	
	\noindent\emph{\underline{Step 0}: Fixing the notation and setting our goal.}
	
	\medskip
	
	\begin{enumerate}[(a)]
		\item Fix  $n\in \bbN$. Let $(U_j)_{j\leq n}$ be a sequence of i.i.d.\ uniform random variables on $(0,1)$ and recall from \cref{sect:sampling} that $\sigma_n=\Perm(\efrak, \sfrak, (U_j)_{j\leq n})$;
		
		\item Fix $\zeta>0$ (small), and set $\eps=n^{-1+\zeta}$;
		
		\item Let  $r = r(p)\in(1/2,1)$ and $\delta = \delta(p)>0$ be as in \cref{cor:key_estimates};
		
		\item Finally, with the above choice of $\eps$ and $\delta$, set $m = \delta \log(1/\eps)$.
	\end{enumerate}	

	We will show that w.h.p.\ $\LIS(\sigma_n)$ is at most $4\, n^{1 - \lambda^*(p) +  3 \zeta}$. Note that this would be enough to conclude the proof.
		
	\medskip
 
	\noindent\emph{\underline{Step 1}: Introducing the discarding rule $\eta$.} 
	
	\medskip
	
	Recall the definition of the set of possible discarding rules $\mathcal{D}$ from the beginning of \cref{sec:strategy_upper_bound} and the notation in \cref{fig:excursion-split2}.
	Let  $\{U^*_\ell\}$ be the set of points in $(U_j)_{j\leq n}$ corresponding to the longest increasing subsequence in $\sigma_n$ (if there are multiple ones, we choose one arbitrarily).
	Let $\eta=\{\eta_b\}_{b\in \mcl B_{\ominus}}$ be the selection rule corresponding to this longest increasing subsequence. More precisely, let us choose $\eta\in \mathcal{D}$ in the following way for all $b\in \mcl B_{\ominus}$:
	\begin{itemize}
		\item if $\{U^*_\ell\}\cap L_b\neq \emptyset$ then set $\eta_b=R_{b}$;
		\item if $\{U^*_\ell\}\cap R_b\neq \emptyset$ then set $\eta_b=L_{b}$;
		\item if $\{U^*_\ell\}\cap L_b = \emptyset$ and $\{U^*_\ell\}\cap R_b= \emptyset$ and there exists $b'\in \mcl B_{\ominus}$ with $b'\leq b$ such that $b\in \eta_{b'}$ then set $\eta_b=\diamond$, otherwise set $\eta_b=L_b$ (note that this is an arbitrary choice).
	\end{itemize}
Here we emphasize that $\{U^*_\ell\}\cap L_b$ and $\{U^*_\ell\}\cap R_b$ cannot both be non-empty, since $\{U^*_\ell\}$ is increasing and $b\in\mcl B_{\ominus}$. Note also that $\eta$ is chosen in a $(\efrak,\sfrak,(U_j)_{j\leq n})$--measurable manner.

Recall now the definition of $\tau_{m}^U=\tau_m^U(r)$ from  \eqref{eq:seq_time} and recall that $I^{U}(\tau_m^{U})$ denotes the interval containing $U$ at height $\tau_m^{U}$ in the interval fragmentation introduce in \eqref{eq: def Ffrak}. For ease of notation, we introduce the more compact notation $I^j_m:=I^{U_j}(\tau_m^{U_j})$ for intervals and $F^j_m:=F^{U_j}(\tau_m^{U_j})$ for their lengths.

\medskip

\noindent\emph{\underline{Step 2}: W.h.p., at most $n^{1 - \lambda^*(p) + \zeta}$ of the intervals in $\left\{I^{j}_m,j\in[n]\right\}$ have length smaller than $\eps$.}

\medskip

From \cref{cor:key_estimates} and our choice of the constants in Step 0, we have that for all $j\leq n$,
\begin{equation}\label{eq:bound_2}
	\bbP\left(F^{j}_m< \eps\right)\leq 2 \, \eps^{\lambda^*(p)}.
\end{equation}
Now set $S_n:=\sum_{j=1}^n \mathds{1}_{F^{j}_m< \eps}$, \textit{i.e.}\ $S_n$ counts the number of intervals in $\left\{I^{j}_m,j\in[n]\right\}$ having length smaller than $\eps$. By Markov's inequality,
\begin{equation}\label{eq:bound_1}
	\bbP\left(S_n>n^{1 - \lambda^*(p) + \zeta}\right)\leq	\frac{\bbE\left[S_n\right]}{n^{1 - \lambda^*(p) + \zeta}}\stackrel{\eqref{eq:bound_2}}{\leq}\frac{n \cdot 2 \, \eps^{\lambda^*(p)}}{n^{1 - \lambda^*(p) + \zeta}}=2\,n^{-\zeta(1-\lambda^*(p))},
\end{equation}
where in the last equality we used that $\eps=n^{-1+\zeta}$.

\medskip

\noindent\emph{\underline{Step 3}: W.h.p., the total length of the non-discarded intervals in $\left\{I^{j}_m, j\in [n]\right\}$ is at most $\eps^{\lambda^*(p)-\zeta}$.}

\medskip

Let $J$ be the set of indexes
\begin{equation*}
	J:=\left\{j\in [n]\,\middle| \,\eta_b\text{ does not discard } U_j \text{ for all } b\in\mcl B^{U_j}_{\ominus} \text{ such that } b\leq\tau^{U_j}_m\right\},
\end{equation*}
and $\mathcal{J}$ be a subset of $J$ such that 
\begin{equation*}
\left\{I^{j}_m,j\in\mathcal{J}\right\}=\left\{I^{j}_m,j\in J\right\},
\end{equation*}
and all the intervals in $\left\{I^{j}_m,j\in\mathcal{J}\right\}$ are pairwise disjoint (note that by definition if $U_i \in I^{j}_m$ then $I^{i}_m=I^{j}_m$). Note also that by definition, $\LIS(\sigma_n)$ is upper bounded by the cardinality of $J$.

Let now $V$ be an additional uniform random variables on $(0,1)$ sampled independently from all other random quantities. 
Note that if $V$ is contained in $I^{j}_m$ for some $j \in \mathcal{J}$, then the event
\begin{equation*}
	A^V_m=\{\eta_b\text{ does not discard } V \text{ for all } b\in\mcl B^V_{\ominus} \text{ such that } b\leq\tau^V_m\},
\end{equation*} 
occurs. 
Hence, since $V$ is uniform and independent of everything else, setting $\mathcal{L}_m=\sum_{j\in\mathcal{J}} F^{j}_m$, \textit{i.e.}\ $\mathcal{L}_m$ is the total length of the non-discarded intervals among $\left\{I^{j}_m,j\in[n]\right\}$, we get
\begin{equation*}
	\bbE\left[\mathcal{L}_m\right]=\bbP\Bigg(V\in\bigcup_{j\in\mathcal{J}}I^{j}_m\Bigg)\leq\bbP(A^V_m).
\end{equation*} 
Recall that the discarding rule $\eta$ from Step 1 depends only on $(\efrak,\sfrak)$ and $(U_j)_{j\leq n}$, and $V$ is uniform and independent from everything else. By \cref{cor:key_estimates} (applied under the conditional law given $(U_j)_{j\leq n}$) and our choice of the constants in Step 0, 
\begin{equation*}
	\bbE\left[\mathcal{L}_m\right]\leq\bbP(A^V_m)=\bbE\left[\bbP\left(A^V_m\,\middle|\,(U_j)_{j\leq n} \right)\right]\leq 2\,\eps^{\lambda^*(p)}.
\end{equation*} 
Hence, using Markov's inequality, we get that
 \begin{equation*}
 	\bbP(\mathcal{L}_m>\eps^{\lambda^*(p)-\zeta})\leq\frac{\bbE\left[\mathcal{L}_m\right]}{\eps^{\lambda^*(p)-\zeta}}\leq 2 \, {\eps^{\zeta}}.
 \end{equation*}

\medskip

\noindent\emph{\underline{Step 4}: W.h.p., the cardinality of $\mathcal{J}$ is at most $ 2 \,n^{1 - \lambda^*(p) + \zeta}$. Moreover, w.h.p.\ the union of the intervals in $\left\{I^{j}_m,j\in\mathcal{J}\right\}$ can be covered by at most $4\, n^{1 - \lambda^*(p) +  \zeta}$ intervals of length $\eps$ with endpoints in $\eps \mathbb{Z}$.}

\medskip

By  Step 2, w.h.p., at most $n^{1 - \lambda^*(p) + \zeta}$ of the intervals in $\left\{I^{j}_m,j\in\mathcal{J}\right\}$ have length smaller than $\eps$. And so, the union of all these intervals can be covered by at most $2\, n^{1 - \lambda^*(p) +  \zeta}$ intervals of length $\eps$ with endpoints in $\eps \mathbb{Z}$.

By Step 3, w.h.p., the total size $\mathcal{L}_m=\sum_{j\in\mathcal{J}} F^{j}_m$ of the non-discarded intervals in $\left\{I^{j}_m,j\in\mathcal{J}\right\}$ is at most $\eps^{\lambda^*(p)-\zeta}$. Hence, recalling that $\eps=n^{-1+\zeta}$, there are at most $\eps^{-1 + \lambda^*(p)-\zeta}\leq 
n^{1 - \lambda^*(p) +  \zeta}$ intervals in $\left\{I^{j}_m,j\in\mathcal{J}\right\}$  having length bigger than $\eps$ and the union of all these intervals can be covered by at most $2\, n^{1 - \lambda^*(p) +  \zeta}$ intervals of length $\eps$ with endpoints in $\eps \mathbb{Z}$.

\medskip

\noindent\emph{\underline{Step 5}: W.h.p., the cardinality of $J$ (which is an upper bound for $\LIS(\sigma_n)$) is at most $ 4 \cdot n^{1 - \lambda^*(p) + 3 \, \zeta}$.}

\medskip

It remains to deal with the possible discrepancy between the cardinality of $\mathcal{J}$ and $J$. 
Let $I_\eps$ be a deterministic interval of size $\eps$ and with endpoints in $\eps \mathbb{Z}$. Note that the number of uniform variables among $(U_j)_{j\in[n]}$ which fall in $I_\eps$ follows a binomial distribution $\text{Bin}(n,\eps)$. Therefore, for all $n\ge 1$,
\begin{align*}
	\bbP\left(\#\{j\in[n], \, U_j\in I_\eps\}<n^{2\zeta}\right)
	&= 1-\bbP\big(\#\{j\in[n], \, U_j\in I_\eps\} \geq n^{2\zeta}\big)\\
	&= 1-\bbP(\text{Bin}(n,\eps)\geq n^{2\zeta}).
\end{align*}
Now recalling that $\eps=n^{-1+\zeta}$, we get by Chernov's bound that
\begin{align}\label{eq:chern_bnd}
	\bbP\left(\#\{j\in[n], \, U_j\in I_\eps\}<n^{2\zeta}\right)
	&\geq1- \min_{\gamma>0} \{\exp(-\gamma n^{2\zeta})\bbE[\exp(\gamma \text{Bin}(n,n^{-1+\zeta}))]\} \notag\\
	&=1-\min_{\gamma>0}\{ \exp(-\gamma n^{2\zeta}) (1-n^{-1+\zeta}+\exp(\gamma)n^{-1+\zeta})^{n}\} \notag\\
	&\geq
	1-c_1  \cdot \exp(-c_2n^{\zeta}),
\end{align}
where $c_1,c_2>0$ are two constants. Letting $(I^i_\eps)_{i\in[\eps^{-1}]}$ denote the collection of $\lceil\eps^{-1}\rceil$ intervals of size $\eps$ with endpoints in $\eps \mathbb{Z}$ covering $(0,1)$, we get that
\begin{align*}
	\bbP\left(\forall i\in[\eps^{-1}],\#\{j\in[n], \, U_j\in I^i_\eps\}<n^{2\zeta}\right)
	&\,\,\geq\,\,1- \lceil\eps^{-1}\rceil \bbP\left(\#\{j\in[n], \, U_j\in I_\eps\} \geq n^{2\zeta}\right) \notag\\
	&\stackrel{\eqref{eq:chern_bnd}}{\geq}1- c_1 n^{1-\zeta} \cdot \exp(-c_2n^{\zeta}), \notag
\end{align*}
where in the last inequality we also used that $\eps=n^{-1+\zeta}$.

By this estimate and the estimate in Step 4, combined with a union bound, we get that w.h.p.\ the total number of points $(U_j)_{j\leq n}$ in the union of the intervals $\{I^{j}_m : j \in \mathcal{J}\}$ is at most $n^{2\zeta} \cdot 4\, n^{1 - \lambda^*(p) +  \zeta}$. In particular, w.h.p., the cardinality of $J$ (which is an upper bound for $\LIS(\sigma_n)$) is at most $ 4\, n^{1 - \lambda^*(p) +  3 \zeta}$.
\end{proof}

\appendix
\section{Numerical simulations}\label{sect:num-sim}

A natural question in light of our results in \cref{thm:upper_lower_perm} and \cref{thm:upper_lower_graphons} is to determine the exact exponent for the polynomial growth of $\LIS(\Perm(\bm{\mu}_p,n))$ and $\LHS(\Graph(\bm{W}_p,n))$.
Since the answers to these questions are the same (recall \cref{lem:perm_to_graph}), in this section, we focus on $\LIS(\Perm(\bm{\mu}_p,n))$ for $p\in(0,1)$.

In \cref{conj:first-conj}, we conjectured that with probability tending to 1 as $n\to\infty$,
\begin{equation*}
	\LIS(\Perm(\bm{\mu}_p,n))=n^{d(p)+o(1)}.
\end{equation*}
We did several numerical simulations to estimate the exact values of $d(p)$. Our simulations were done in the following way. For $p\in(0,1)$ fixed, we sampled one million independent permutations $\Perm(\bm{\mu}_p,2^k)$ of size $2^k$ and we computed $\LIS(\Perm(\bm{\mu}_p,2^k))$, for all $k=10,\dots,18$. Then for each fixed $k$, we computed the average of $\LIS(\Perm(\bm{\mu}_p,2^k))$, denoted by $\overline{\LIS_k(p)}$, over the one million samples. Finally, we performed a linear regression on the points $\left(k\log(2),\log(\overline{\LIS_k(p)})\right)$ for $k=10,\dots,18$, obtaining a linear function $a(p)+\overline{d(p)}x$. It is quite straightforward to realize that assuming \cref{conj:first-conj} one should have that 
\begin{equation*}
	d(p)\approx \overline{d(p)}.
\end{equation*}
See \cref{fig-regression} for the linear regression when $p=1/2$. See also the table and the plot in \cref{table:regressions} for a summary of our numerical simulations for different values of the parameter $p\in (0,1)$.

We highlight that the discrepancy between our lower bound $\alpha_*(p)$ and the numerical values for $d(p)$ is tiny (see the fourth column in the table in  \cref{table:regressions}). 
Since these simulations look only at the \emph{length} of the longest increasing subsequence, we also tested if the real longest increasing subsequence and the subsequence obtained through our selection rule $\sfS$ in \eqref{eq:selection} are close, getting a positive answer. See the results in \cref{fig-simulations-lis}, p.\ \pageref{fig-simulations-lis}. 

\begin{figure}[ht!]
	\centering
	\includegraphics[width=.46\textwidth]{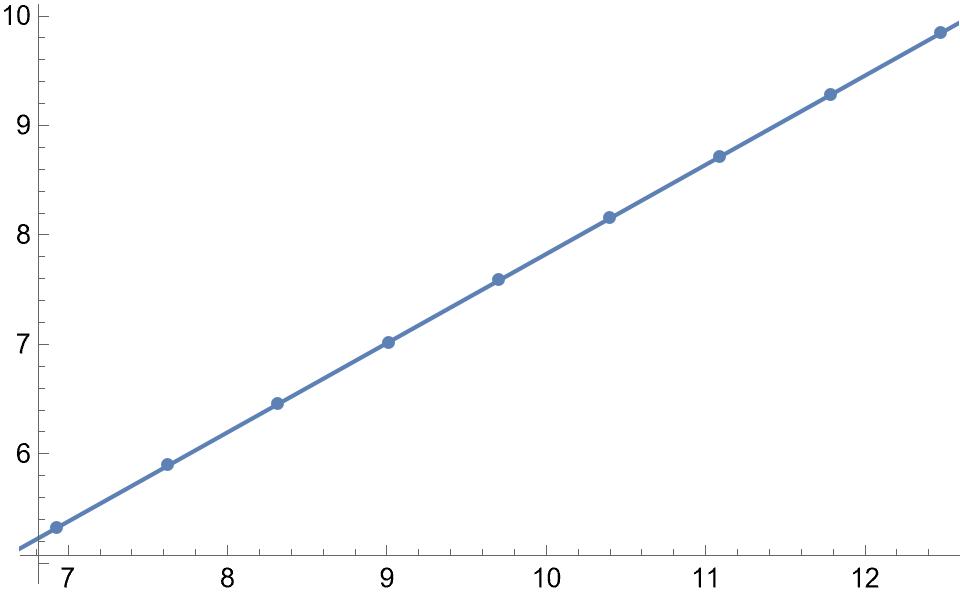}
	\caption{\label{fig-regression} The blue dots are the points $\left(k\log(2),\log(\overline{\LIS_k(p)})\right)$ for $k=10,\dots,18$. The linear regression of these points gives the line $a(1/2)+\overline{d(1/2)}x=-0.323+0.815 x$. Therefore we estimate that $d(1/2)\approx0.815$.
	}
\end{figure}

\begin{figure}[ht!]
	\begin{minipage}{.495\textwidth}
		\begin{tabular}{ |P{0.8cm}||P{1cm}|P{2.3cm}|P{2.2cm}| }
			\hline
			\multicolumn{4}{|c|}{ $\LIS(\Perm(\bm{\mu}_p,n))\approx C(p)\cdot n^{d(p)}$} \\
			\hline
			$p$ & $\alpha_*(p)$ & $d(p)$ & $|d(p)-\alpha_*(p)|$ \\
			\hline
			0.1 & 0.584 & $0.58\pm 10^{-2}$ & $< 2\cdot10^{-2}$ \\
			0.2 & 0.653 & $0.656\pm 7\cdot10^{-3}$ & $< 1.4\cdot10^{-2}$ \\
			0.3 & 0.712 & $0.715\pm 5\cdot10^{-3}$ & $< 10^{-2}$ \\
			0.4 & 0.765 & $0.766\pm 4\cdot10^{-3}$ & $< 8\cdot10^{-3}$ \\
			0.5 & 0.812 & $0.814\pm 3\cdot10^{-3}$ & $< 6\cdot10^{-3}$ \\
			0.6 & 0.855 & $0.857\pm 2\cdot10^{-3}$ & $< 4\cdot10^{-3}$ \\ 
			0.7 & 0.895 & $0.897\pm 2\cdot10^{-3}$ & $< 4\cdot10^{-3}$ \\
			0.8 & 0.932 & $0.933\pm10^{-3}$ & $<  2\cdot10^{-3}$ \\ 
			0.9 & 0.967 & $0.967\pm10^{-3}$ & $<  2\cdot10^{-3}$ \\
			\hline
		\end{tabular}
	\end{minipage}
	\begin{minipage}{.495\textwidth}
		\includegraphics[width=\textwidth]{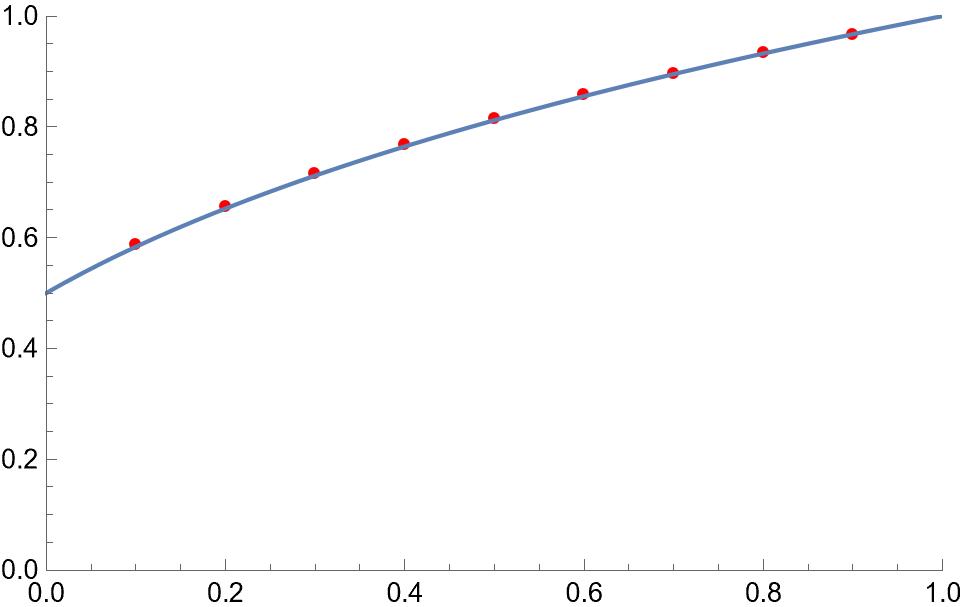}
	\end{minipage}
	\caption{\label{fig-simulations-lis-diagram} \textbf{Left:} For various values of the parameter $p$ (first column) we indicate the value of our lower bound $\alpha_*(p)$ (second column), the value of the exponent $d(p)$ estimated from simulations (third column), and the difference between these two values (fourth column).\label{table:regressions} \textbf{Right:} In blue, the plot of the function $\alpha_*(p)$. In red, the numerical values for the exponents $d(p)$ (computed numerically).}
\end{figure}

\begin{figure}[ht!]
	\includegraphics[width=.195\textwidth]{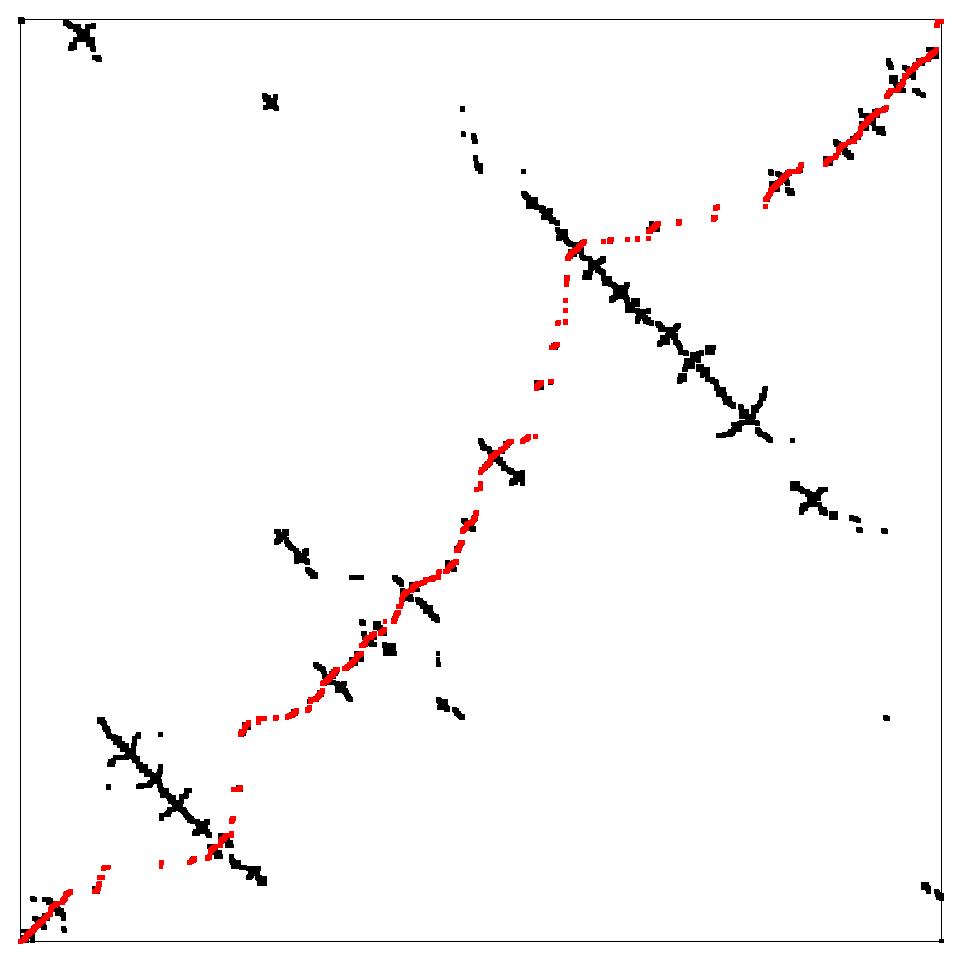}
	\includegraphics[width=.195\textwidth]{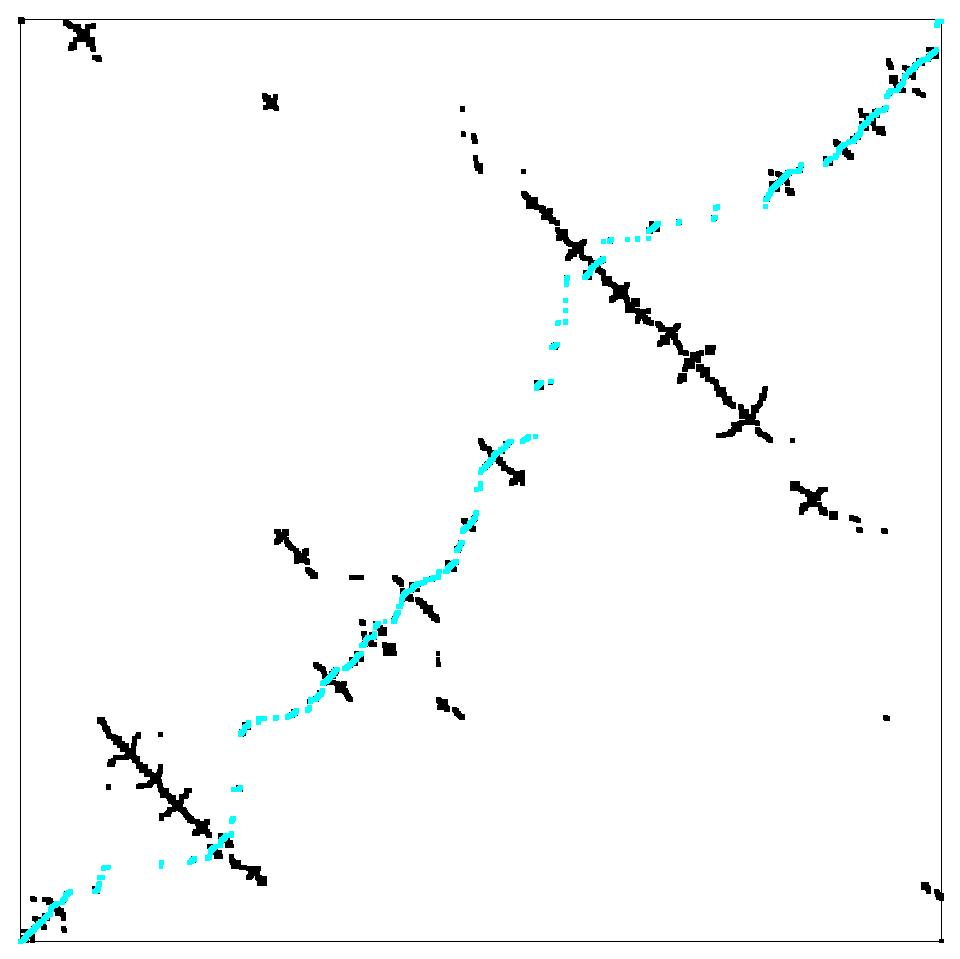}
	\includegraphics[width=.195\textwidth]{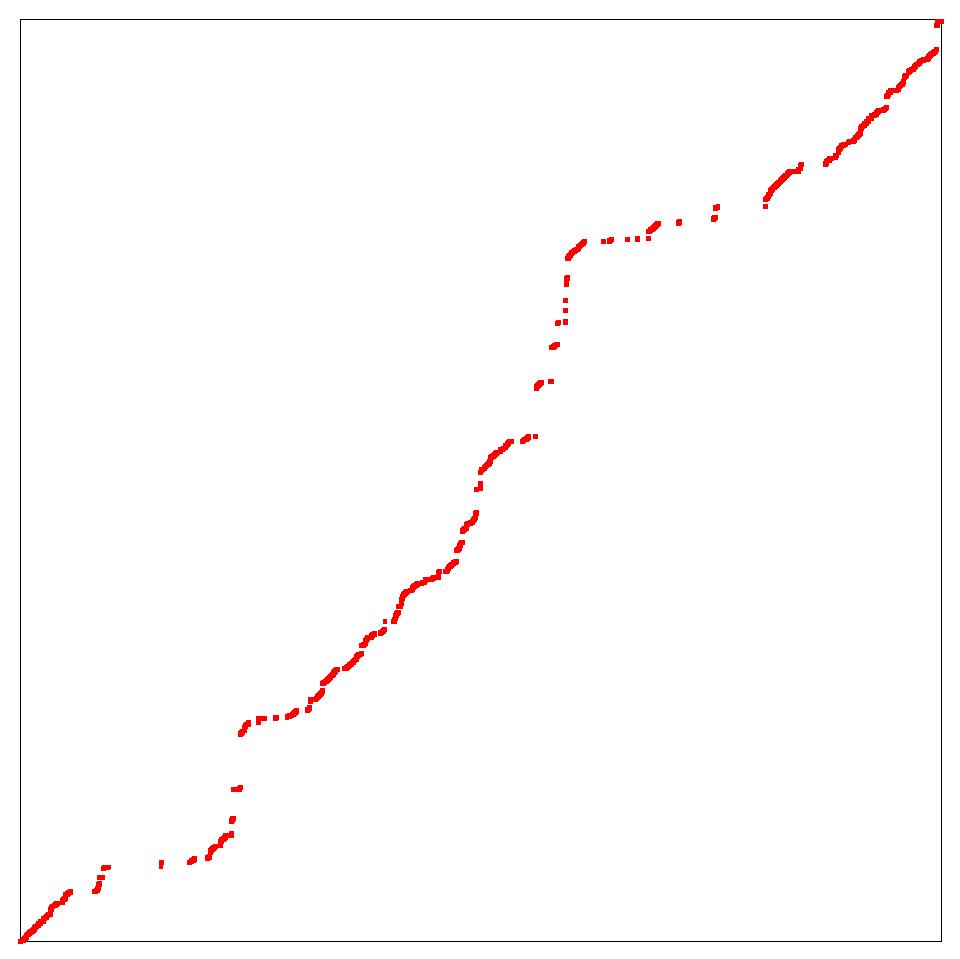}
	\includegraphics[width=.195\textwidth]{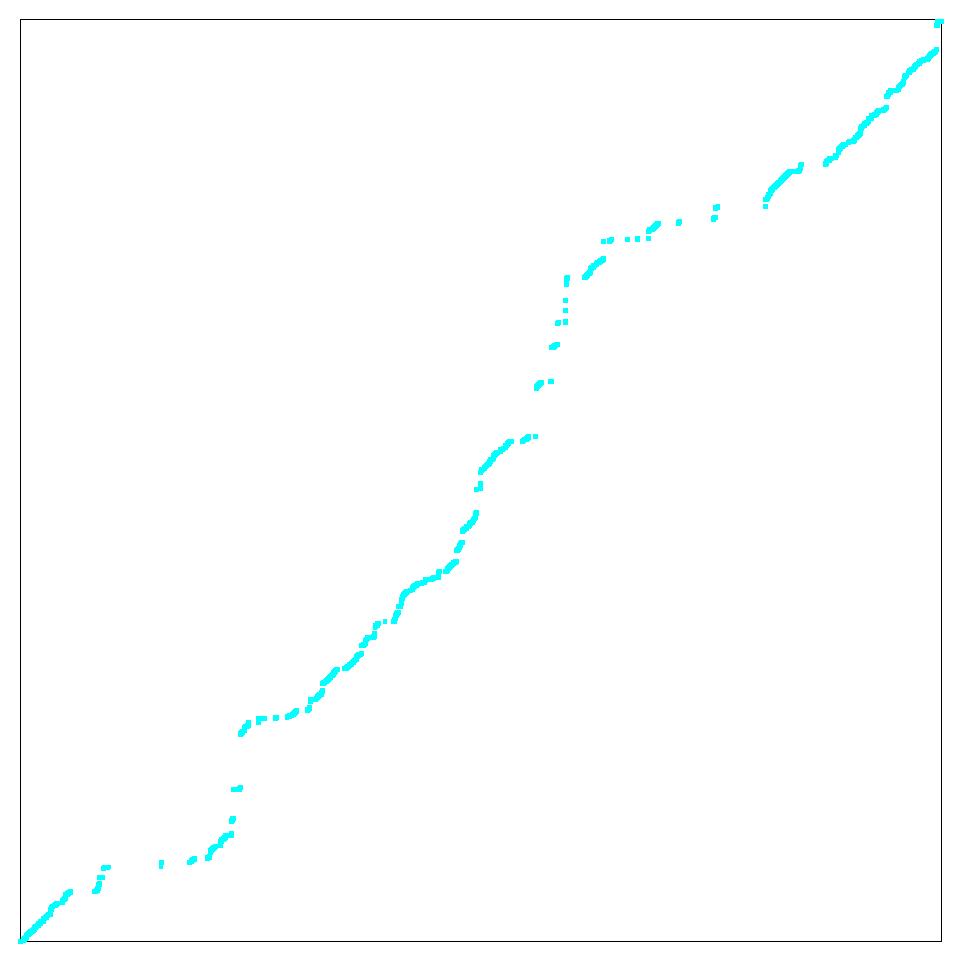}   
	\includegraphics[width=.195\textwidth]{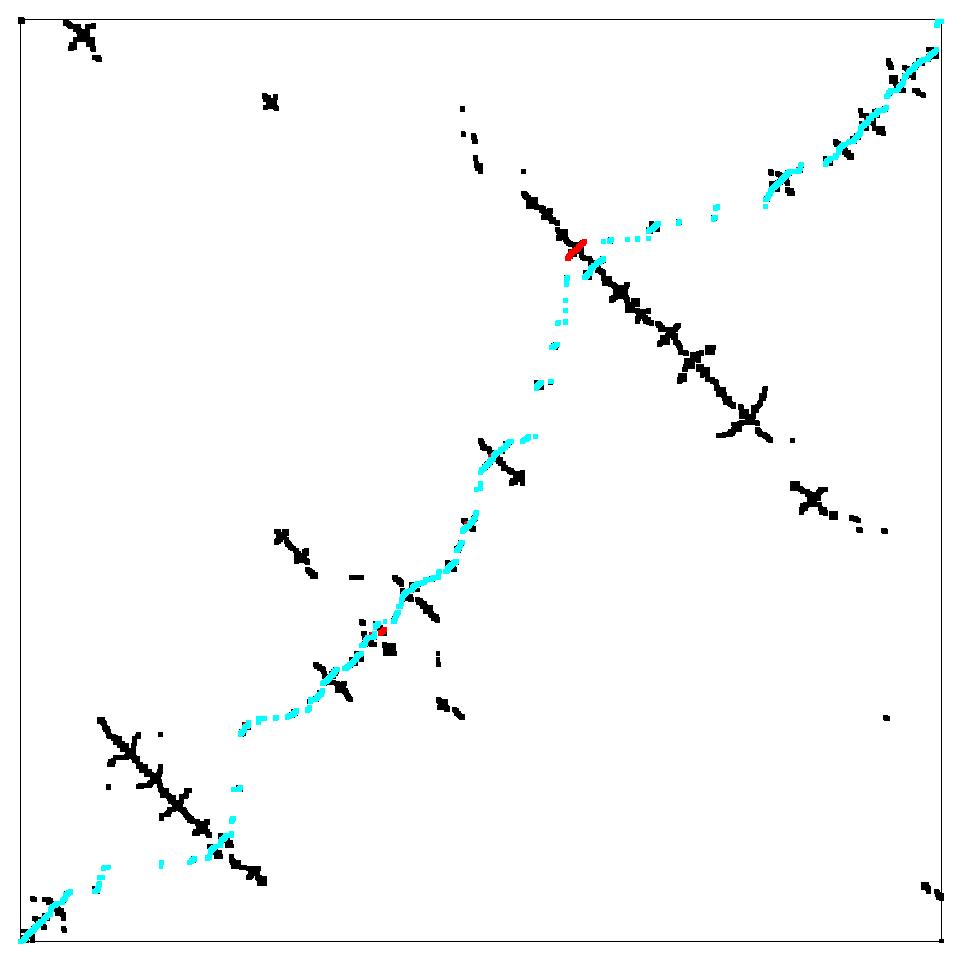}
	\caption{\label{fig-simulations-lis} \textbf{From left to right:} (1) A permutation of length $262144$ sampled from the Brownian separable permuton $\bm\mu_{1/2}$ with one longest increasing subsequence in red of length $22546$. (2) The same permutation with one increasing subsequence of length $21751$ in cyan computed using our selection rule $\sfS$ in \eqref{eq:selection}. (3-4) The two diagrams in (1) and (2) with only the two increasing subsequences. (5) The cyan increasing subsequence is plotted on top of the red increasing subsequence. Note that the two sequences are very similar since the cyan subsequence almost completely covers the red subsequence.}
\end{figure}

\bibliography{cibib,cibib2}
\bibliographystyle{hmralphaabbrv}

\end{document}